\newcommand{\ZZ}{\mathbb{Z}}
\newcommand{\RR}{\mathbb{R}}
\newcommand{\CC}{\mathbb{C}}
\newcommand{\NN}{\mathbb{N}}
\newcommand{\norm}[1]{\left\lVert#1\right\rVert}
\newcommand*\conj[1]{\overline{#1}}
\newtheorem{thm}{\bf Theorem}
\newtheorem{theorem}{Theorem}[section]
\newtheorem{proposition}[theorem]{Proposition}
\newtheorem{lemma}[theorem]{Lemma}
\newtheorem{conjecture}[theorem]{Conjecture}
\theoremstyle{remark}
\newtheorem*{remark}{Remark}
\theoremstyle{definition}
\newtheorem{definition}[theorem]{Definition}
\title{Fourier dimension of conical and cylindrical hypersurfaces}
\author{Junjie Zhu
}
\newcommand{\Addresses}{{
  \bigskip
  \footnotesize

  \textsc{Department of Mathematics, 1984 Mathematics Road, University of British Columbia, Vancouver, BC Canada V6T 1Z2}\par\nopagebreak
  \textit{E-mail address}: \texttt{jzhu@math.ubc.ca}
}}
\date{\today}
\keywords{Fourier dimension, Hausdorff dimension, Salem set, Gaussian curvature, Principal curvature, Oscillatory integrals, Stationary phase, Conical and cylindrical hypersurfaces}
\subjclass[2020]{Primary 42B10, 42B20, 28A12; Secondary 53A05.}
\begin{document}

\maketitle

\begin{abstract}
     The notions of Hausdorff and Fourier dimensions are ubiquitous in harmonic analysis and geometric measure theory. It is known that any hypersurface in $\RR^{d+1}$ has Hausdorff dimension $d$. However, the Fourier dimension depends on the finer geometric properties of the hypersurface. For instance, the Fourier dimension of a hyperplane is 0, and the Fourier dimension of a hypersurface with non-vanishing Gaussian curvature is $d$. Recently, Harris has shown that the Euclidean light cone in $\RR^{d+1}$ has Fourier dimension $d-1$, which leads one to conjecture that the Fourier dimension of a hypersurface equals the number of non-vanishing principal curvatures. We prove this conjecture for all $d$-dimensional cones and cylinders in $\RR^{d+1}$ generated by hypersurfaces in $\mathbb{R}^d$ with non-vanishing Gaussian curvature. In particular, cones and cylinders are not Salem. Our method involves substantial generalizations of Harris's strategy.
\end{abstract}

\section{Introduction}

\subsection{Hausdorff and Fourier dimensions}

Finding the sizes of sets is one of many fundamental questions in mathematics. Depending on the context, we measure the sets with different notions of size. We use cardinality to measure finite sets, and the Lebesgue measure for Borel sets in $\RR^{n}$. These notions of sizes are less effective on fractal sets. 

For example, for $\alpha<1$, the middle-$\alpha$ Cantor set, which is constructed by starting with the unit interval $[0, 1] \subset \RR$ and repeatedly removing the middle-$\alpha$ portions of intervals, is uncountable but has Lebesgue measure $0$. Nevertheless, we expect that smaller $\alpha$ corresponds to a larger middle-$\alpha$ set. Thus, we need new notions to describe sizes of sets like the middle-$\alpha$ Cantor set. 

One notion often used in fractal geometry is the \textit{Hausdorff dimension} defined as follows. For a set $A \subset \RR^n$, $s, \delta > 0$, 
$$\mathcal{H}^{s}_{\delta}(A) := \inf \{\sum_{j} \text{diam}(E_j)^s | A \subset \cup_{j} E_j, \text{diam}(E_j) < \delta\},$$
and the \textit{$s$-dimensional Hausdorff measure} $\mathcal{H}^{s}(A) := \lim_{\delta \to 0} \mathcal{H}^{s}_{\delta}(A)$. The \textit{Hausdorff dimension} of $A$ is $\dim_H(A) := \sup \{s : \mathcal{H}^{s}(A) = \infty\} = \inf \{s: \mathcal{H}^{s}(A) = 0 \}.$

Frostman's lemma \cite[Theorem 2.7]{mattila_2015} offers an alternative characterization of the Hausdorff dimension. Let $\mathcal{M}(A)$ be the set of measure $\mu$ supported on $A$ with finite total mass $\norm{\mu}_{1} := \mu(A) < \infty$. The \textit{Fourier transform} of a measure $\mu$ at $\xi \in \RR^{n}$ is defined as $\widehat{\mu}(\xi) := \int e^{-2\pi i x \xi} d\mu(x)$. For a Borel set $A \subset \RR^{n}$, 
$$\dim_H(A) = \sup \{s \in [0, n]: \exists \mu \in \mathcal{M}(A), I_s(\mu) := \int |\widehat{\mu}(\xi)|^2 |\xi|^{s-n} d\xi < \infty \},$$ where $I_s$ is the $s$\textit{-energy} of $\mu$.

For $\mu \in \mathcal{M}(A)$, if $\sup_{\xi \in \RR^{n}}|\xi|^{\frac{s}{2}}|\widehat{\mu}(\xi)| < \infty$ for an $s > 0$, $I_{s_0}(\mu) < \infty$ when $0<s_0 < s$. From the characterization above, $\dim_H(A) \geq s$. This motivates the notion of \textit{Fourier dimension}, defined as 
$$\dim_F(A) := \sup \{s \in [0, n]: \exists \mu \in \mathcal{M}(A), \sup_{\xi \in \RR^{n}} |\xi|^{\frac{s}{2}}|\widehat{\mu}(\xi)| < \infty  \}$$
for Borel $A \subset \RR^{n}$. Consequently, $\dim_F(A) \leq \dim_H(A)$, and the inequality is strict for some sets $A$. The set $A$ is \textit{Salem} if $\dim_F(A)=\dim_H(A)$. 

An interesting question is what properties of the set the Fourier dimension captures, and whether we can infer that the set has some properties from its Fourier and Hausdorff dimensions. A related task is to identify the similarities and differences between the two dimensions. For example, there are properties enjoyed by sets of large Fourier dimensions, but not shared by sets only with large Hausdorff dimensions. A work of this genre is \cite{liang} by Liang and Pramanik. 

One path to study the questions above is to determine the Fourier dimension of a given set $A$ and relate the Fourier dimension with properties of $A$. Other paths include identifying Salem sets and constructing sets with Fourier dimension $\alpha$ and often with Hausdorff dimension $\beta$, where $0 \leq \alpha \leq \beta \leq n$. We refer readers to \cite{survey}, \cite{ThomasWilliamKörner2011}, and \cite{lp} for samples of work.

\subsection{Orientable hypersurfaces}

Let $M\subset \RR^{d+1} $ be an orientable hypersurface, which is a topological manifold of dimension $d$ with a normal direction $N: M \to \mathbb{S}^{d}$. This manuscript focuses on computing $\dim_F(M)$, interpreting the properties of $M$ that $\dim_F(M)$ captures, and differentiating between topological, Hausdorff, and Fourier dimensions on $M$.

Since the Hausdorff dimension is preserved under diffeomorphisms, which are bi-Lipschitz, $\dim_H(M) = d$ \cite[Proposition 3.1]{falconer}. In contrast, in many cases, the Fourier dimension of $M$ depends on the \textit{principal curvatures} and the \textit{Gaussian curvature}, which are eigenvalues and the determinant of the \textit{Weingarten map}, respectively. We refer readers to section \ref{sec62} for definitions and more discussions on the Weingarten map, principal curvatures, and the Gaussian curvature. If $M$ is a hyperplane, all principal curvatures at any point $p \in M$ are $0$, and $\dim_F(M) = 0$. If all principal curvatures of points $p \in M$ are non-zero, $\dim_F(M) = d$ by the following theorem. 

\begin{thm}\cite[{ Section 8.3, Theorem 1}]{Stein1993}
\label{th191}
    Let $M \subset \RR^{d+1}$ be a compact orientable hypersurface with non-vanishing Gaussian curvature. Then there exists $\mu \in \mathcal{M}(M)$ with $\sup_{\xi \in \RR^{d+1}}|\xi|^{\frac{d}{2}}|\widehat{\mu}(\xi)| < \infty$.
\end{thm}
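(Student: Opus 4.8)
The plan is to take $\mu$ to be the surface measure on $M$, which lies in $\mathcal{M}(M)$ since a compact orientable hypersurface has finite surface measure, and to prove the pointwise bound $|\widehat{\mu}(\xi)| \lesssim (1+|\xi|)^{-d/2}$ by the method of stationary phase. The bound for $|\xi|$ bounded is trivial because $\norm{\mu}_1 < \infty$, so only large $|\xi|$ matters. Since $M$ is compact, a finite smooth partition of unity reduces the problem to a single coordinate patch on which, after a rotation of $\RR^{d+1}$, the surface is a graph $x_{d+1} = \phi(x')$ with $x'$ in a small ball $B \subset \RR^d$, $\phi$ smooth, and the measure is $a(x')\,dx'$ for a fixed smooth $a$ supported in $B$. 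Writing $\xi = (\xi',\xi_{d+1})$, one is led (up to a harmless unimodular factor) to estimating the oscillatory integral
\[
\int_{B} e^{-2\pi i \Phi_\xi(x')}\, a(x')\, dx', \qquad \Phi_\xi(x') = \xi' \cdot x' + \xi_{d+1}\phi(x'),
\]
uniformly in $\xi$ with $|\xi|$ large.

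This is where the curvature hypothesis enters. Shrinking $B$ if necessary, I arrange that at the centre of the patch the tangent plane is horizontal, so that the Gaussian curvature there equals $\det(\operatorname{Hess}\phi)$; by continuity and the non-vanishing of the Gaussian curvature, $\operatorname{Hess}\phi(x')$ is invertible on $B$ with eigenvalues bounded away from $0$ and $\infty$, the bounds depending only on $M$. I then split according to a dichotomy comparing $|\xi_{d+1}|$ and $|\xi'|$, with a small fixed constant $c>0$. In the elliptic regime $|\xi_{d+1}| \geq c|\xi'|$ one has $|\xi_{d+1}| \sim |\xi|$ and $\operatorname{Hess}_{x'}\Phi_\xi = \xi_{d+1}\operatorname{Hess}\phi$ has all eigenvalues of size $\sim |\xi|$, so any stationary point is non-degenerate and the standard non-degenerate stationary phase estimate (as in Stein's book, or repeated van der Corput after diagonalizing the Hessian) gives decay $\lesssim |\xi|^{-d/2}$. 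In the non-stationary regime $|\xi_{d+1}| < c|\xi'|$ one has $\nabla_{x'}\Phi_\xi = \xi' + \xi_{d+1}\nabla\phi$ with $\nabla\phi$ bounded on $B$, so choosing $c$ small forces $|\nabla_{x'}\Phi_\xi| \gtrsim |\xi'| \sim |\xi|$ with no stationary point, and repeated integration by parts against the operator $L = (2\pi i|\nabla\Phi_\xi|^2)^{-1}\nabla\Phi_\xi\cdot\nabla$ yields decay faster than any power of $|\xi|$.

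Summing these estimates over the finitely many patches yields $\sup_{\xi}|\xi|^{d/2}|\widehat{\mu}(\xi)| < \infty$, which is the assertion. The main point — and essentially the only place the hypothesis is used — is the uniformity of the stationary phase bound: one must be able to choose the coordinate patches small enough that the Hessian of the graphing function stays uniformly elliptic across all of them, and this is precisely what compactness together with non-vanishing Gaussian curvature delivers. The remaining work is the careful bookkeeping of derivatives of $\phi$ and $a$ to see that the constants in the stationary and non-stationary estimates depend only on finitely many such derivatives, hence are uniform on the compact surface.
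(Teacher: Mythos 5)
Your argument is correct and is essentially the standard proof from the cited source (Stein, Section 8.3, Theorem 1): surface measure, partition of unity into graph patches, and the dichotomy between the non-stationary regime (integration by parts) and the elliptic regime (non-degenerate stationary phase, uniform since the Hessian of the graphing function is uniformly invertible on small patches by compactness and non-vanishing curvature). The paper does not reprove this theorem but quotes it, so there is nothing to reconcile beyond noting that your sketch matches the classical argument it invokes.
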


The theorem above inspires a natural conjecture of relating the Fourier dimension of $M$ with the number of non-zero principal curvatures. 

\begin{conjecture}
    Let $M$ be a $d$-dimensional orientable hypersurface in $\RR^{d+1} $. Suppose $k$ of the $d$ principal curvatures are not vanishing, and the other $d-k$ principal curvatures are always zero. Then $\dim_F(M) = k$.
\end{conjecture}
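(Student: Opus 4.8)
The plan is to prove the matching bounds $\dim_F(M) \ge k$ and $\dim_F(M) \le k$, in each case reducing to the local structure of $M$. The geometric input is the classical fact that if $d-k$ of the principal curvatures vanish identically then the kernel of the Weingarten map is a smooth integrable distribution whose leaves are open pieces of affine $(d-k)$-planes, so that $M$ is ruled; after a linear change of coordinates a neighbourhood in $M$ is then a genuine cylinder $N \times \RR^{d-k}$ over a $k$-dimensional $N \subset \RR^{k+1}$ of non-vanishing Gaussian curvature (parallel rulings), or a cone over a $(d-1)$-dimensional hypersurface of non-vanishing Gaussian curvature (concurrent rulings; here $k = d-1$), or --- in general --- an intermediate ruled hypersurface such as a tangent developable. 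Since $\dim_F$ is a supremum over measures on $M$, for the lower bound it suffices to exhibit a measure on a compact piece of $M$, and a standard mollification reduces the upper bound to compactly supported measures as well.

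For $\dim_F(M) \ge k$ I would produce one measure with $|\widehat{\mu}(\xi)| \lesssim |\xi|^{-k/2}$. In the cylinder case take $\mu = \sigma \otimes (\psi\, dy)$, where $\sigma \in \mathcal{M}(N)$ is the compactly supported measure with $|\widehat{\sigma}(\zeta)| \lesssim |\zeta|^{-k/2}$ provided by Theorem~\ref{th191} and $\psi \in C^\infty_c(\RR^{d-k})$; then $\widehat{\mu}(\zeta, \eta) = \widehat{\sigma}(\zeta) \widehat{\psi}(\eta)$, and the regime $|\zeta| \gtrsim |\xi|$ is controlled by the $\sigma$-factor and $|\zeta| \ll |\xi|$ by the rapid decay of $\widehat{\psi}$. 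In the cone case, realise a compact piece of $M$ as $\{ t\omega : t \in [1,2],\ \omega \in N\}$ with $N$ inside a hyperplane transverse to the rulings, and set $d\mu = \psi(t)\, dt\, d\sigma(\omega)$ with $\psi \in C^\infty_c((1,2))$ and $\sigma$ a smooth-density measure on a compact piece of the $(d-1)$-dimensional base; writing $\xi = (\xi'', \xi_{d+1})$ one gets $\widehat{\mu}(\xi) = \int_1^2 \psi(t)\, e^{-2\pi i t \xi_{d+1}}\, h(t\xi'')\, dt$, where $h$ is the Fourier transform of the base measure and, by the stationary-phase proof of Theorem~\ref{th191}, $|\partial^\alpha h(\zeta)| \lesssim_\alpha (1+|\zeta|)^{-(d-1)/2}$ for all $\alpha$. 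If $|\xi''| \gtrsim |\xi|$ the bound on $h$ already gives what is wanted; otherwise $|\xi_{d+1}| \sim |\xi|$, and integrating by parts in $t$ a fixed number $j \ge (d-1)/2$ of times produces a factor $|\xi_{d+1}|^{-j}$ against a $t$-integral of size $\lesssim_j (1+|\xi''|)^{\,j - (d-1)/2}$ --- each $\partial_t$ either hits $\psi$ or brings down a factor $\xi''$ contracted against a derivative of $h$ --- so, since $1 + |\xi''| \lesssim |\xi|$, the whole thing is $\lesssim |\xi|^{-(d-1)/2}$.

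For $\dim_F(M) \le k$ one must rule out faster decay for \emph{every} measure on $M$. In the cylinder case, restrict $\widehat{\mu}$ to the ``curved'' frequency subspace $\RR^{k+1} \times \{0\}$: there it equals the Fourier transform of the pushforward $\bar{\mu}$ of $\mu$ under $\RR^{d+1} \to \RR^{k+1}$, a measure supported on $N$; since $\dim_H(N) = k$ is strictly below the ambient dimension $k+1$, we have $\dim_F(N) = k$, and this genuinely forbids $\widehat{\bar{\mu}}$, hence $\widehat{\mu}$, from decaying faster than $|\cdot|^{-k/2}$. For a cone no linear projection collapses $M$ (the rulings are not parallel), so this slice is unavailable; instead I would disintegrate a hypothetical well-decaying $\mu$ along the radial direction into measures on the dilated copies of the $(d-1)$-dimensional base, and combine an $L^2$/energy estimate over frequency annuli with the dilation symmetry of the cone and, again, the fact that a $(d-1)$-dimensional hypersurface in $\RR^d$ has Fourier dimension exactly $d-1 < d$, to force $\dim_F(M) \le d-1$.

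The step I expect to be the main obstacle is this last one: the upper bound for cones --- and for ruled hypersurfaces that are neither cylinders nor cones --- where no collapsing projection exists and Harris's strategy has to be substantially generalised, together with the uniform weighted stationary-phase estimates underpinning both bounds. For the conjecture in full generality there are further difficulties that are both geometric and analytic: hypersurfaces of constant relative nullity $d - k$ form a much richer class than cylinders and cones, and on pieces that are, for instance, tangent developables the natural parametrisation has a phase that is no longer affine in the ruling parameters, so the clean integration-by-parts mechanism above must be replaced by a genuine multi-parameter stationary-phase analysis tailored to the geometry of the ruling.
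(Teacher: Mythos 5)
There is a genuine gap, and it sits exactly where the paper's actual content lies. First, note that the statement you are proving is stated in the paper as a \emph{conjecture}: the paper does not prove it in general, but only for cones and cylinders generated by hypersurfaces of non-vanishing Gaussian curvature (Theorems \ref{t14} and \ref{t120}), i.e.\ the case $k=d-1$ with these two special ruled structures. Your proposal does not close this gap either: for the upper bound $\dim_F\le k$ in the cone case you offer only a sketch (``disintegrate along the radial direction, combine an $L^2$/energy estimate over frequency annuli with the dilation symmetry''), and you yourself flag this as the main obstacle. That sketch is not an argument: a bound on $\widehat{\mu_0}$ along a linear slice no longer controls a pushforward onto the base (precisely because, as you say, no linear projection collapses a cone), and averaging energy over annuli gives $L^2$ information, which is too weak to contradict a pointwise decay hypothesis. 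The paper's mechanism is different and is the heart of Sections \ref{sec764}--\ref{sec_ex}: after localizing with Lemma \ref{lem23} and putting the base in Morse normal form $\phi(x)=(\varphi(x)^T,Q_m(x))$, one constructs re-parametrizations $T_t$ and a vector field $\eta(t)$ with $hQ_m(T_t(x))=\Phi(x,h)\cdot\eta(t)$ (Lemma \ref{lem708}), pushes $\mu$ forward along each $T_t$ and averages in $t$ to get $\nu$; then the assumed decay of $\widehat{\mu}$ forces $|\widehat{\nu}(ke_d)|\lesssim k^{-s/2}$ (Lemma \ref{lem53}), while stationary phase (Theorem \ref{t61}) forces $|\widehat{\nu}(ke_d)|\gtrsim k^{-(d-1)/2}$ (Lemma \ref{lem54}), whence $s\le d-1$. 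Nothing in your proposal substitutes for this step. In addition, your geometric reduction is not valid as stated: constant relative nullity $d-k$ gives ruled structure with totally geodesic leaves, but the trichotomy cylinder/cone/tangent developable is special to developable surfaces in $\RR^3$; for general $k<d-1$ the class is much richer (as you concede), and neither you nor the paper handles it, so the full conjecture remains unproved.

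What is correct in your proposal: the lower bound $\dim_F(M)\ge k$ is fine (your cylinder and cone constructions parallel the paper's proofs of Propositions \ref{p31} and \ref{p33}, and the general case is already covered by Littman's theorem, Theorem \ref{t33}, as the paper remarks). Your cylinder upper bound is also essentially right and is a genuinely simpler route than the paper's Proposition \ref{p34}: after localizing $\mu$ to a compact piece of $N\times\RR^{d-k}$ via Lemma \ref{lem23}, restricting $\widehat{\mu}$ to $\RR^{k+1}\times\{0\}$ identifies it with the Fourier transform of the pushforward under the coordinate projection, a measure supported on a compact piece of $N$; since that set has Hausdorff dimension $k$, the inequality $\dim_F\le\dim_H$ (via the energy characterization) forbids decay of exponent $s/2$ with $s>k$. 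This avoids the averaging and stationary-phase machinery entirely for cylinders, at the cost of being useless for cones --- which is exactly the case the paper was written to handle.
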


If the conjecture holds, we can view the Fourier dimension as a generalization of the number of non-zero principal curvatures only defined on manifolds, and it provides an avenue for introducing concepts related to curvatures to fractal sets.

The conjecture holds when $k=0$ ($M$ is a hyperplane), and $k=d$ (Theorem \ref{th191}). By Harris et al., the conjecture holds when the manifold is the light cone $\mathcal{C}$, where the number of non-zero principal curvatures at any point on $\mathcal{C} \backslash \{ 0\}$ is $d-1$.
The proof relies on the rotational symmetry of $\mathcal{C}$.

\begin{thm}[\cite{Fraser_2022}]
\label{t16}
    If $\mathcal{C}$ is the light cone 
$$\mathcal{C} = \{ h(x, 1) | x \in \mathbb{S}^{d-1}, h \in \RR \},$$
then $\dim_F(\mathcal{C})=d-1$.
\end{thm}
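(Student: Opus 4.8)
The plan is to prove the two inequalities $\dim_F(\mathcal{C})\ge d-1$ and $\dim_F(\mathcal{C})\le d-1$ separately, using throughout the $SO(d)$ symmetry of $\mathcal{C}$ in the first $d$ coordinates together with the classical, direction-independent stationary-phase expansion
$$\widehat{\sigma}(\rho\theta)=c\,\rho^{-\frac{d-1}{2}}\cos(2\pi\rho-\phi_d)+O\big(\rho^{-\frac{d+1}{2}}\big),\qquad \rho\to\infty,\ \ \theta\in\mathbb{S}^{d-1},$$
where $\sigma$ is the surface measure on $\mathbb{S}^{d-1}$ and $c\ne 0$, $\phi_d$ are constants; the uniformity in $\theta$ is exactly where the rotational symmetry of $\mathcal{C}$ is used.

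For the lower bound I would exhibit one measure attaining decay rate $(d-1)/2$. Fix $\psi\in C_c^\infty((1,2))$ with $\psi\ge 0$, $\psi\not\equiv 0$, and let $\mu\in\mathcal{M}(\mathcal{C})$ be the push-forward of $\sigma\otimes\psi(h)\,dh$ under $(y,h)\mapsto(hy,h)$. For $\xi=(\xi',\xi_{d+1})\in\RR^d\times\RR$, Fubini gives $\widehat{\mu}(\xi)=\int_1^2\psi(h)e^{-2\pi i h\xi_{d+1}}\widehat{\sigma}(h\xi')\,dh$. Substituting the expansion above when $|\xi'|\gtrsim 1$ (the range $|\xi'|\lesssim 1$ being handled directly by non-stationary phase in $h$, since then $|\xi|\sim|\xi_{d+1}|$), the main term becomes a sum of two one-dimensional oscillatory integrals in $h$ with linear phases $h(\pm|\xi'|-\xi_{d+1})$ and smooth amplitude $\psi(h)h^{-(d-1)/2}$; $N$-fold integration by parts bounds each by $C_N|\xi'|^{-(d-1)/2}\big(1+\big||\xi'|\mp\xi_{d+1}\big|\big)^{-N}$, while the remainder contributes $O(|\xi'|^{-(d+1)/2})$. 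A short case distinction between $|\xi_{d+1}|\le 2|\xi'|$ (so $|\xi|\sim|\xi'|$) and $|\xi_{d+1}|>2|\xi'|$ (so both phase factors are $\gtrsim|\xi|$) then gives $|\widehat{\mu}(\xi)|\lesssim|\xi|^{-(d-1)/2}$ for every $\xi$, hence $\dim_F(\mathcal{C})\ge d-1$.

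For the upper bound, let $\mu\in\mathcal{M}(\mathcal{C})$ be nonzero with $|\widehat{\mu}(\xi)|\lesssim|\xi|^{-s/2}$; I want $s\le d-1$ and may assume $s>1$. Multiplying $\mu$ by a smooth cutoff in the coordinate $x_{d+1}$ convolves $\widehat{\mu}$ with a Schwartz function in a single variable, so it does not decrease the decay exponent; combining this with a dilation (which also preserves the exponent) lets me assume $\mu$ is nonzero and compactly supported on the upper half $\{x_{d+1}>0\}$ of $\mathcal{C}$, away from the vertex. Averaging $\mu$ over the $SO(d)$-action produces $\overline{\mu}\in\mathcal{M}(\mathcal{C})$, still nonzero and with the same decay exponent, but now rotation invariant, hence the push-forward of $\sigma\otimes m$ for a finite nonzero measure $m$ on a compact subinterval of $(0,\infty)$; thus $\widehat{\overline{\mu}}(\xi',\xi_{d+1})$ is, up to a constant, the one-variable Fourier transform in $\xi_{d+1}$ of $\widehat{\sigma}(h\xi')\,dm(h)$. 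For $\xi_0'$ small enough that $\widehat{\sigma}(h\xi_0')>0$ on $\operatorname{supp}m$, the bound with $s>1$ makes $\xi_{d+1}\mapsto\widehat{\overline{\mu}}(\xi_0',\xi_{d+1})$ lie in $L^2(\RR)$, which forces $dm=w(h)\,dh$ with $0\le w\in L^1$ and $w\not\equiv0$. Finally I would evaluate $\widehat{\overline{\mu}}$ along a ruling direction $\xi=t(x_0,1)$, $x_0\in\mathbb{S}^{d-1}$: there $|\xi'|=\xi_{d+1}=t$, so in the expansion of $\widehat{\sigma}(htx_0)$ the factor $e^{-2\pi i h t}$ exactly cancels the oscillation of one of the two terms, leaving the contribution $\tfrac{c}{2}e^{-i\phi_d}t^{-(d-1)/2}\int h^{-(d-1)/2}w(h)\,dh$, while the remaining term is $o(t^{-(d-1)/2})$ by Riemann--Lebesgue and the error is $O(t^{-(d+1)/2})$. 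Since $\int h^{-(d-1)/2}w(h)\,dh>0$, this yields $|\widehat{\overline{\mu}}(t(x_0,1))|\gtrsim t^{-(d-1)/2}$ for large $t$, and comparison with $|\widehat{\overline{\mu}}(\xi)|\lesssim|\xi|^{-s/2}$ gives $s\le d-1$. The two inequalities together give $\dim_F(\mathcal{C})=d-1$.

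The step I expect to be the main obstacle, in both halves, is the analysis near the ruling directions $\xi=t(\pm x_0,1)$: these are precisely the places where $\widehat{\mu}$ fails to decay faster than $|\xi|^{-(d-1)/2}$, and both the ``no larger than'' estimate (lower bound) and the ``critical-size term survives'' estimate (upper bound) rest on the phase and amplitude in the stationary-phase expansion of $\widehat{\sigma}$ being independent of the direction $\theta\in\mathbb{S}^{d-1}$ — i.e.\ on the rotational symmetry of the cone. Replacing $\mathbb{S}^{d-1}$ by an arbitrary hypersurface with non-vanishing Gaussian curvature destroys this uniformity, which is what necessitates the heavier machinery in the rest of the paper.
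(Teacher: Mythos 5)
Your proposal proves the right statement but not by the paper's route, so two comments are in order. First, on the comparison: the paper itself only cites Theorem \ref{t16} from \cite{Fraser_2022} and instead recovers it as the special case $S=\mathbb{S}^{d-1}$ of Theorem \ref{t14}; there the upper bound is obtained \emph{without} exact symmetry, by averaging push-forwards of $\mu$ under a family of reparametrizations $T_t$ against a bump $\psi(t)\,dt$ and comparing the two evaluations of $\widehat{\nu}(ke_d)$ (decay hypothesis in Lemma \ref{lem53} versus stationary phase in the parameter $t$ in Lemma \ref{lem54}, with $T_t$ supplied by Lemma \ref{lem708}). Your upper bound instead averages over the full $SO(d)$ action, uses the $s>1$ decay plus Plancherel in the $\xi_{d+1}$ variable to force the radial factor $m$ to be absolutely continuous, and then lets Riemann--Lebesgue kill the wrongly-oscillating half of $\widehat{\sigma}$ along a ruling. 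That is essentially the symmetry-based argument of \cite{Fraser_2022}, and it is correct as far as the main steps go (modulo small housekeeping: dispose of $\operatorname{spt}\mu=\{0\}$, reflect if $\mu$ charges only the lower half, and note $s\le 1\le d-1$ to justify assuming $s>1$, which also restricts to $d\ge 2$). What the paper's heavier route buys is exactly the generality you point out at the end: no symmetry group is needed, only Morse coordinates and the family $T_t$; what your route buys is a soft treatment of the radial distribution via the $L^2$/absolute-continuity trick, in place of the paper's explicit lower bounds on the stationary-phase main term.

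The step that would fail as written is in your lower bound, in the regime where $|\xi'|$ is of moderate size but $|\xi_{d+1}|$ is huge. Your main terms are fine there (they gain arbitrarily many powers of $\bigl||\xi'|\mp\xi_{d+1}\bigr|\sim|\xi|$), but you bound the remainder of the expansion of $\widehat{\sigma}(h\xi')$ only by $O(|\xi'|^{-(d+1)/2})$, which is $\lesssim|\xi|^{-(d-1)/2}$ only when $|\xi'|\gtrsim|\xi|^{(d-1)/(d+1)}$; for instance with $|\xi'|$ of constant size and $|\xi_{d+1}|\to\infty$ your remainder bound does not decay in $|\xi|$ at all, so the claimed global estimate $|\widehat{\mu}(\xi)|\lesssim|\xi|^{-(d-1)/2}$ does not follow. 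The repair is standard: either use the full Hankel-type expansion $\widehat{\sigma}(\rho\theta)=e^{2\pi i\rho}a_+(\rho)+e^{-2\pi i\rho}a_-(\rho)$ with symbol bounds $|a_\pm^{(k)}(\rho)|\lesssim\rho^{-\frac{d-1}{2}-k}$, so that the whole integrand (not just the leading term) can be integrated by parts in $h$; or argue as the paper does in Proposition \ref{p31}: split at $|\xi'|\ge c|\xi|$ versus $|\xi'|\le c|\xi|$, and in the latter range integrate in $h$ first for each fixed $y\in\mathbb{S}^{d-1}$, where the phase $hk(y,1)\cdot\eta$ is linear in $h$ with $|(y,1)\cdot\eta|\ge\tfrac12$, so no asymptotics for $\widehat{\sigma}$ are needed there at all. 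With either fix your lower bound, and hence the whole argument, goes through.
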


\subsection{Main results}

We generalize Theorem \ref{t16} to more general cones, adapt the method to cones without rotational symmetry, and show the conjecture also holds in this case.
\begin{theorem}
\label{t14}
    Let $S\subset \RR^d$ be a hypersurface with $\text{Int} S$, the interior of $S$, having non-vanishing Gaussian curvature, and the cone generated by $S$ is
\begin{equation*}
    C := \{h(x, 1) |x \in S, h \in \RR \} \subset \RR^{d+1}.
\end{equation*}
Then $\dim_F(C)=d-1$.
\end{theorem}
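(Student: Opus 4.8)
The plan is to establish $\dim_F(C) \geq d-1$ by constructing a measure $\mu$ on $C$ with $|\widehat{\mu}(\xi)| \lesssim |\xi|^{-(d-1)/2 + \eps}$ for every $\eps > 0$, and to establish the matching upper bound $\dim_F(C) \leq d-1$ by exhibiting a direction along which any measure on $C$ must decay slowly (because $C$ contains line segments, or because a suitable projection of $C$ has a one-dimensional flat piece). The upper bound should be the easy half: the cone $C$ is ruled by the lines $\{h(x_0,1): h \in \RR\}$ for fixed $x_0 \in S$, so there is a hyperplane's worth of directions $\xi$ orthogonal to the axis-direction $(x_0,1)$... more precisely, one restricts $\mu$ to a small cap, changes variables so the relevant piece looks like a graph over the flat ruling direction times a curved $(d-1)$-dimensional base, and observes that integration against $e^{-2\pi i \xi \cdot}$ in the ruling direction produces no decay, capping the Fourier decay at the rate coming from the $(d-1)$ curved directions. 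I would phrase this via the standard fact that if $A$ contains a bi-Lipschitz copy of $[0,1] \times A'$ then $\dim_F(A) \leq \dim_F([0,1]) + (\text{something})$, or more directly: project $C$ to a hyperplane to get a set whose Fourier dimension is at most $d-1$ and use that $\dim_F$ does not increase under orthogonal projection onto a subspace containing the relevant directions — this needs to be set up carefully but is routine.

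The heart of the matter is the lower bound. I would parametrize $C$ near a point of $\mathrm{Int}\,S$ by writing $S$ locally as a graph and using coordinates $(y, h) \in \RR^{d-1} \times \RR$, where $y$ ranges over a chart of $\mathrm{Int}\,S$ with non-vanishing Gaussian curvature and $h$ is the dilation parameter; the natural measure to put on $C$ is $d\mu = \psi(y,h)\, dy\, dh$ for a smooth bump $\psi$ supported away from the apex and away from $\partial S$. Then $\widehat{\mu}(\xi) = \int \psi(y,h)\, e^{-2\pi i h\, \phi(y,\xi)}\, dy\, dh$ where $\phi(y,\xi) = (\sigma(y), 1)\cdot \xi$ and $\sigma$ is the chart of $S$. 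The inner integral in $h$ is (up to the cutoff in $h$) a one-dimensional Fourier transform, contributing roughly $|\phi(y,\xi)|^{-1}$ when $\phi(y,\xi)$ is bounded away from zero, and the remaining integral in $y$ is an oscillatory integral in the $d-1$ variables $y$ with phase $h\,\phi(y,\xi)$. The key geometric input is that because $S$ has non-vanishing Gaussian curvature, the map $y \mapsto \sigma(y)$ has a Hessian (in the directions relevant to $\xi$) of full rank $d-1$ for $\xi$ outside a bad cone, so stationary phase in $y$ gives decay $|h\,\xi|^{-(d-1)/2}$; combining with the $h$-integration and carefully handling the region where $\phi(y,\xi)$ is small (the "bad set" of directions nearly orthogonal to the cone, where one must instead use non-stationary phase / integration by parts in $y$, exploiting that $\nabla_y \phi \neq 0$ there precisely because $\sigma$ is an immersion) yields the claimed bound. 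This is exactly the structure of Harris's argument for the light cone, but without rotational symmetry one cannot diagonalize by passing to polar coordinates; instead one must run the stationary phase analysis for a general curved base $S$, splitting $\RR^{d+1}\setminus\{0\}$ into the region where $\xi$ is transverse to the cone (gain from $h$-integration plus full stationary phase in $y$) and the region where $\xi$ is nearly tangent (no gain from $h$, but full non-stationary phase gain in $y$ from the immersion, and crucially a bound on the measure of such $\xi$), and interpolating. The main obstacle — and where the bulk of the technical work lies — is a uniform, quantitative stationary/non-stationary phase estimate for the $y$-integral that is robust as $\xi$ moves between these regimes and as the stationary point approaches the boundary of the chart, together with summing the dyadic pieces in $h$ and in the distance of $\xi$ from the bad cone without losing more than $|\xi|^{\eps}$. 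I expect one also needs $\psi$ chosen with some care (e.g. compactly supported in $h$ on an annulus $|h| \sim 1$ after rescaling, then summed over dyadic scales of $h$ using the dilation structure of the cone) so that the one-dimensional $h$-transform is genuinely $O(|\phi|^{-1})$ rather than merely $O(|\phi|^{-1/2})$; getting the full power here is what makes the exponent come out to $d-1$ rather than something smaller.
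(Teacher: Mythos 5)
You have the difficulty reversed, and the half you dismiss as easy is where your argument has a genuine gap. The upper bound $\dim_F(C)\leq d-1$ means: for \emph{every} $\mu\in\mathcal{M}(C)$ with $\sup_\xi|\xi|^{s/2}|\widehat{\mu}(\xi)|<\infty$ one must show $s\leq d-1$. None of the soft mechanisms you invoke can do this. The fact that $C$ is ruled by lines gives nothing, because Fourier dimension is monotone under inclusion in the unhelpful direction ($A\subset B$ gives $\dim_F(A)\leq\dim_F(B)$, so a flat subset never caps the dimension of the ambient set; $\RR^{d+1}$ contains lines and has full Fourier dimension). There is no ``standard fact'' bounding $\dim_F(A)$ above when $A$ contains a bi-Lipschitz copy of $[0,1]\times A'$ --- bi-Lipschitz maps do not preserve Fourier dimension. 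And the projection argument runs backwards: if $\pi$ is orthogonal projection onto a subspace $V$, then $\widehat{\pi_*\mu}$ is the restriction of $\widehat{\mu}$ to $V$, so projections can only inherit or improve the decay exponent, i.e.\ $\dim_F(\pi(C))\gtrsim\min(\dim_F(C),\dim V)$; this cannot yield an upper bound on $\dim_F(C)$, and in any case the projection of $C$ onto $\{x_{d+1}=0\}$ is a $d$-dimensional region whose Fourier dimension is $d$, not $d-1$. The paper's proof of the upper bound (Sections 5 and 7, generalizing Harris) is an active construction: starting from an \emph{arbitrary} $\mu$ with decay exponent $s/2$, restrict it to a compact piece $C'$ away from the apex, build a family of reparametrizations $T_t$ of the base (their existence is the technical Lemma 7.1, proved with Morse's lemma, so that $Q_m(T_t(x))=(\phi(x),1)\cdot\eta(t)$), form the averaged measure $\nu=\int\nu_t\,\psi(t)\,dt$ of push-forwards, and evaluate $\widehat{\nu}(ke_d)$ two ways: once as $\int\widehat{\mu}(k\eta(t))\psi(t)\,dt=O(k^{-s/2})$, and once by stationary phase in $t$, which gives $|\widehat{\nu}(ke_d)|\gtrsim k^{-(d-1)/2}$ for large $k$. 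Comparing the two forces $s\leq d-1$. This transference of the hypothesis on an arbitrary measure into a concrete oscillatory integral is the essential idea, and it is absent from your proposal.

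Your treatment of the lower bound, which you present as the heart of the matter, is workable but much heavier than needed (and the $\eps$-loss, while still sufficient for a Fourier-dimension lower bound, is avoidable). The paper simply takes the classical measure $\mu^{(S)}$ on $S$ with decay $|\xi|^{-(d-1)/2}$ (Theorem A, Stein), forms $\nu^{(C)}$ by integrating $h\mapsto\mu^{(S)}$-pushforwards against a bump $\psi(h)$ on $[1,2]$, and splits directions $\eta=(\eta',\eta_{d+1})$ into two cases: when $|\eta'|$ is small the $h$-integral gives rapid decay since the phase $h(y,1)\cdot\eta$ is non-stationary in $h$, and when $|\eta'|\gtrsim 1$ one bounds $|\widehat{\nu^{(C)}}(k\eta)|$ by $\int|\psi(h)||\widehat{\mu^{(S)}}(hk\eta')|\,dh\lesssim k^{-(d-1)/2}$; alternatively Littman's theorem gives the lower bound at once. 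So: your lower-bound sketch can be repaired or replaced easily, but the upper bound needs the averaging-plus-stationary-phase argument (or something equivalent), not the projection/ruling heuristics you propose.
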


\begin{remark}
    \begin{enumerate}
        \item At any point on $C\backslash\{0\}$, the number of non-zero principal curvatures is $d-1$. In addition, diffeomorphisms in general do not preserve Fourier dimension. Therefore, Theorem \ref{t16} does not imply Theorem \ref{t14} directly.
        \item The above result also suggests that for the cone generated by $S$ of non-vanishing Gaussian curvature, rotational symmetry does not affect its Fourier dimension.
    \end{enumerate}
\end{remark}  

We have a similar conclusion for cylinders, where the number of non-zero principal curvatures at any point on the cylinders is also $d-1$.
\begin{theorem}
\label{t120}
    Let $S\subset \RR^d$ be a hypersurface with $\text{Int} S$, the interior of $S$, having non-vanishing Gaussian curvature, and the cylinder generated by $S$ is
\begin{equation*}
    D := S \times \RR = \{(x, h) |x \in S, h \in \RR \} \subset \RR^{d+1}.
\end{equation*}
Then $\dim_F(D)=d-1$.
\end{theorem}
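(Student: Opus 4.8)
The plan is to prove both the upper bound $\dim_F(D) \le d-1$ and the lower bound $\dim_F(D) \ge d-1$ separately. For the upper bound, I would exhibit, for any measure $\mu \in \mathcal{M}(D)$, a direction along which $\widehat{\mu}$ fails to decay fast enough. Since $D = S \times \RR$ is a cylinder, it contains the line $\{x_0\} \times \RR$ through any point $x_0 \in S$; more usefully, $D$ is invariant under translation in the $(d+1)$-st coordinate. Disintegrating $\mu$ over the $S$-fibers, one sees that for $\xi = (0,\dots,0,\tau)$ the Fourier transform $\widehat{\mu}(0,\dots,0,\tau)$ is essentially the one-dimensional Fourier transform of the pushforward of $\mu$ to the last coordinate, which is a finite measure on $\RR$ and hence cannot have $|\widehat{\mu}(0,\dots,0,\tau)| \lesssim |\tau|^{-s/2}$ for any $s > 0$ unless... more care is needed: one should slice instead in a direction \emph{tangent} to the flat direction but test decay there. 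The cleanest route is: if $\dim_F(D) > d-1$ then there is $\mu$ with $|\widehat\mu(\xi)| \lesssim |\xi|^{-(d-1+\eps)/2}$, but restricting $\xi$ to the hyperplane $\RR^d \times \{0\}$ and using that $D$ projects onto $S$, a $(d-1)$-dimensional-curvature object sitting inside a $d$-dimensional affine space, forces a contradiction with the sharp decay rate $\tfrac{d-1}{2}$ for hypersurfaces of non-vanishing Gaussian curvature in $\RR^d$ — essentially one shows no measure on $S \subset \RR^d$ beats exponent $\tfrac{d-1}{2}$, hence no measure on $D$ can beat it along that hyperplane of frequencies.

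For the lower bound, which I expect to be the substantive part, the plan is to construct a measure $\mu \in \mathcal{M}(D)$ with $|\widehat{\mu}(\xi)| \lesssim |\xi|^{-(d-1)/2}$ up to logarithmic losses, or with the sharp power after an $\eps$-loss, for every $\eps>0$. I would take $\mu = \sigma \times \rho$ where $\sigma$ is the curved measure on $\mathrm{Int}\,S \subset \RR^d$ provided by Theorem \ref{th191} (applied to $S$ as a compact orientable hypersurface in $\RR^d$ of non-vanishing Gaussian curvature, after cutting down to a compact piece of the interior with a smooth cutoff), so that $\sup_{\eta \in \RR^d}|\eta|^{(d-1)/2}|\widehat{\sigma}(\eta)| < \infty$; and $\rho$ is a carefully chosen compactly supported probability measure on the line $\RR$ in the last coordinate. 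Writing $\xi = (\eta, \tau) \in \RR^d \times \RR$, we get the product formula $\widehat{\mu}(\eta,\tau) = \widehat{\sigma}(\eta)\,\widehat{\rho}(\tau)$. The difficulty is immediate: when $\eta = 0$ but $|\tau|$ is large, $\widehat{\sigma}(0) = \|\sigma\|_1 \ne 0$, so we would need $|\widehat{\rho}(\tau)| \lesssim |\tau|^{-(d-1)/2}$, which is impossible for $d \ge 4$ since a finite measure on $\RR$ has Fourier transform decaying at rate at most $\tfrac12$ (indeed a Salem measure on $\RR$ only reaches exponent close to $\tfrac12$, never $\tfrac{d-1}{2} \ge \tfrac32$). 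Hence a pure product measure cannot work, and this is exactly why the theorem's content is nontrivial and why Harris's cone argument — and its generalization here — is needed rather than a one-line tensor construction.

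The way around this obstacle, following the generalization of Harris's strategy announced in the abstract, is to not take a product but to build $\mu$ as a suitably weighted average of measures adapted to the cylinder's geometry, exploiting that in the region where $|\eta|$ is comparable to or larger than $|\tau|$ one gets decay from the curvature of $S$, while in the region $|\tau| \gg |\eta|$ one must instead use cancellation coming from an honest oscillatory-integral analysis on $D$ itself. Concretely I would parametrize $D$ near a point of $\mathrm{Int}\,S$ by $(u, h) \mapsto (\phi(u), h)$ with $\phi: \RR^{d-1} \to \RR^d$ a graph parametrization of $S$ with non-vanishing Gaussian curvature (Hessian of the defining height function nondegenerate), put a smooth compactly supported density $a(u,h)$, and study
\begin{equation*}
    \widehat{\mu}(\eta,\tau) = \int a(u,h)\, e^{-2\pi i (\eta \cdot \phi(u) + \tau h)}\, du\, dh.
\end{equation*}
The $h$-integral is just $\widehat{a(u,\cdot)}(\tau)$, which is rapidly decaying in $\tau$ for each fixed $u$ because $a$ is smooth — this is the key gain a cylinder offers over a cone and it means the genuinely dangerous frequencies are those with $|\tau| \lesssim |\eta|$, i.e. $|\eta| \sim |\xi|$; there, stationary phase in $u$ against the nondegenerate phase $\eta \cdot \phi(u)$ gives the decay $|\eta|^{-(d-1)/2}\sim|\xi|^{-(d-1)/2}$, uniformly in $\tau$, by the standard $d{-}1$-dimensional stationary phase estimate (essentially the content of Theorem \ref{th191} in the fibered form). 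I would organize the estimate by a dyadic decomposition in $|\xi|$ together with a partition of the cosphere into the caps where $|\tau|/|\eta|$ is large versus bounded; the large-$\tau$ caps are handled by non-stationary phase / repeated integration by parts in $h$ (smoothness of $a$), and the bounded ratio caps by stationary phase in $u$. The main obstacle is making these two regimes meet uniformly — ensuring the constants in the integration-by-parts bound and in the stationary phase bound can be patched with no loss worse than a constant (or at worst a logarithm or an arbitrarily small power, which still yields $\dim_F = d-1$) across the transition region $|\tau| \sim |\eta|$, and handling the boundary $\partial S$ where the Gaussian curvature hypothesis is only assumed on the interior, which forces a cutoff and a limiting argument (sending the cutoff to exhaust $\mathrm{Int}\,S$ and checking the Fourier decay constants stay bounded, or rather using that any fixed compact piece of $\mathrm{Int}\,S$ already supports a measure with the right decay and that suffices for the lower bound on $\dim_F$).
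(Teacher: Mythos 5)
Your upper-bound route is genuinely different from the paper's, and for the cylinder it is simpler. The paper proves Proposition \ref{p34} by the same averaging scheme it uses for cones: given $\mu$ with decay exponent $\frac{s}{2}$, it builds reparametrized copies $\nu_t$ via Morse coordinates on $S$, averages them against a bump $\psi$, and evaluates $\widehat{\nu}(ke_d)$ twice --- once from the decay hypothesis, once by stationary phase --- to force $s \leq d-1$. Your projection argument bypasses all of that: for $\xi=(\eta,0)$ one has $\widehat{\mu}(\eta,0)=\widehat{\pi_{*}\mu}(\eta)$, where $\pi$ is the projection onto the first $d$ coordinates and $\pi_{*}\mu$ is a nonzero finite measure supported on $\overline{S}$, a set of Hausdorff dimension $d-1$; hence $s \leq \dim_F(\overline{S}) \leq \dim_H(\overline{S})=d-1$. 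Note, however, that the correct justification of this last step is the Frostman/energy inequality $\dim_F \leq \dim_H$, \emph{not}, as you write, the ``sharp decay rate'' for hypersurfaces of non-vanishing Gaussian curvature: curvature is irrelevant here (the same bound holds for a flat piece of hyperplane), and sharpness of $\frac{d-1}{2}$ for the surface measure says nothing about arbitrary measures on $S$. With that fix your upper bound is complete; it is also worth seeing why the paper does not argue this way --- the trick exploits the product structure $D=S\times\RR$ and fails for the cone, whose projection to $\RR^d$ fills out a $d$-dimensional set, whereas the paper's averaging method treats cones and cylinders uniformly.

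Your lower-bound discussion contains a genuine error, though it ends up self-correcting. The claim that a finite measure on $\RR$ has Fourier decay at most $|\tau|^{-1/2}$, so that a product $\sigma\times\rho$ cannot work for $d\geq 4$, is false: taking $d\rho=\psi\,dh$ with $\psi\in C_0^{\infty}(\RR)$ gives $|\widehat{\rho}(\tau)|\leq C_N|\tau|^{-N}$ for every $N$; the cap $\dim_F\leq 1$ in the definition is a convention restricting $s$ to $[0,n]$, not an obstruction to rapid decay of an absolutely continuous measure. In fact the paper's proof of Proposition \ref{p33} is exactly this product measure, $d\nu^{(D)}=d\mu^{(S)}\,\psi\,dh$ with $\mu^{(S)}$ from Theorem \ref{th191}, and its estimate is precisely the two-regime analysis you then describe as the ``way around the obstacle'': rapid decay from smoothness of $\psi$ when the frequency is mostly vertical, curvature decay of $\widehat{\mu^{(S)}}$ otherwise. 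Splitting directions at a fixed small threshold $|\eta'|\geq c$ suffices; no dyadic decomposition, logarithmic loss, or delicate matching at $|\tau|\sim|\eta|$ is needed, and the boundary issue is handled exactly as you say, by working on a compact piece of $\operatorname{Int}S$. Finally, your expectation that the lower bound is the substantive half is inverted relative to the paper: the heavy machinery (Morse's lemma, averaged pushforwards, stationary phase) lives entirely in the upper bound, which your projection observation replaces.
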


To establish Theorems \ref{t16}, \ref{t14}, and \ref{t120}, we need to show the Fourier dimension is bounded below and above by $d-1$. For the upper bound, the main strategy in all cases is to create a new measure by averaging a series of push-forwards of an old measure. The push-forwards used in the light cone are based on the \textit{orthogonal group} $O(d)$, which is the group of automorphisms on the sphere $\mathbb{S}^{d-1}$. An adaptation is needed for other conical hypersurfaces as not every element in $O(d)$ maps at least a compact submanifold of $S$ to $S$. We can further modify the strategy in the case of cylinders.

\subsection{Organization of the manuscript}
Theorem \ref{t14} follows from propositions \ref{p31} and \ref{p32}, and Theorem \ref{t120} follows from propositions \ref{p33} and \ref{p34}. We show Propositions \ref{p31} and \ref{p33} that give lower bounds on the Fourier dimension in section \ref{sec4}. We present examples of Propositions \ref{p32} and \ref{p34} in section \ref{sec267}, then we prove Proposition \ref{p32} in section \ref{sec764} and Proposition \ref{p34} in section \ref{sec981}. The proofs of Propositions \ref{p32} and \ref{p34} rely on a technical lemma presented in section \ref{sec_ex}. Finally, we propose a new question in section \ref{sec_nq}.

\subsection{Acknowledgements} This project originated from a student research seminar in the winter term of 2023 at the University of British Columbia. I thank my advisor Malabika Pramanik for the discussion and the guidance in preparing this manuscript.

\section{Proof of Theorems \ref{t14} and \ref{t120}}

Theorem \ref{t14} is a corollary of propositions \ref{p31} and \ref{p32}, and theorem \ref{t120} follows from propositions \ref{p33} and \ref{p34}.
\begin{proposition}
\label{p31}
    $\dim_F(C) \geq d-1$. 
\end{proposition}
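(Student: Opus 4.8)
The plan is to construct, for every $s < d-1$, a measure $\mu \in \mathcal{M}(C)$ (supported on a compact piece of the cone) satisfying the Fourier decay estimate $\sup_{\xi}|\xi|^{s/2}|\widehat{\mu}(\xi)| < \infty$. The natural candidate is a measure that disintegrates as a product: along the base direction, push forward (a piece of) the measure $\nu \in \mathcal{M}(S)$ supplied by Theorem \ref{th191} applied to the $(d-1)$-dimensional hypersurface $S \subset \RR^d$, which has non-vanishing Gaussian curvature on $\operatorname{Int} S$; along the radial/height direction $h$, use a smooth compactly supported density $\varphi(h)\,dh$ with, say, $h$ bounded away from $0$. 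Concretely I would take $\mu$ to be the pushforward of $\nu \otimes (\varphi(h)\,dh)$ under the map $(x,h)\mapsto h(x,1)$, after first restricting $\nu$ to a compact subset of $\operatorname{Int} S$ so that the curvature bounds from Theorem \ref{th191} are uniform.

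The key computation is to write $\widehat{\mu}(\xi)$ for $\xi = (\xi', \xi_{d+1}) \in \RR^d \times \RR$ as
\begin{equation*}
    \widehat{\mu}(\xi) = \int_{\RR} \varphi(h)\, e^{-2\pi i h \xi_{d+1}} \left( \int_{S} e^{-2\pi i h\, x \cdot \xi'}\, d\nu(x) \right) dh = \int_{\RR} \varphi(h)\, e^{-2\pi i h \xi_{d+1}}\, \widehat{\nu}(h\xi')\, dh.
\end{equation*}
Theorem \ref{th191} gives $|\widehat{\nu}(\eta)| \lesssim |\eta|^{-(d-1)/2}$, so $|\widehat{\nu}(h\xi')| \lesssim |h|^{-(d-1)/2} |\xi'|^{-(d-1)/2}$ on the support of $\varphi$. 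This already yields $|\widehat{\mu}(\xi)| \lesssim |\xi'|^{-(d-1)/2}$, which handles the regime where $|\xi'| \gtrsim |\xi|$. The remaining regime, where $|\xi'|$ is small compared to $|\xi_{d+1}| \sim |\xi|$, is handled by the oscillatory factor $e^{-2\pi i h \xi_{d+1}}$: here $\widehat{\nu}(h\xi')$ is a smooth, slowly varying function of $h$ (since $\xi'$ is small), so non-stationary phase / repeated integration by parts in $h$ gives rapid decay in $\xi_{d+1}$, hence in $|\xi|$. Interpolating the two regimes gives $|\widehat{\mu}(\xi)| \lesssim |\xi|^{-(d-1)/2 + \epsilon}$ for any $\epsilon > 0$, which is more than enough to conclude $\dim_F(C) \geq d-1$; in fact one expects the clean bound $|\widehat{\mu}(\xi)| \lesssim |\xi|^{-(d-1)/2}$ after splitting more carefully (e.g. comparing $|h\xi'|$ to $1$).

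The main obstacle I anticipate is making the two-regime argument uniform and rigorous near the transition, and in particular controlling $\widehat{\nu}(h\xi')$ when $h\xi'$ is neither large nor bounded away from the origin — one must use that $\widehat{\nu}$ is bounded (by $\|\nu\|_1$) and Lipschitz to glue the decay estimate with the integration-by-parts estimate without losing the exponent. A secondary technical point is the restriction to $\operatorname{Int} S$: since Theorem \ref{th191} requires a compact hypersurface with everywhere non-vanishing curvature, one should choose the compact piece of $S$ in its interior and simply note that a measure on a subset of $C$ witnesses the lower bound for $\dim_F(C)$ by monotonicity. None of this should require more than standard stationary-phase bookkeeping once the decomposition of $\xi$-space is fixed.
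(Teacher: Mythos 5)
Your construction is the same as the paper's: push forward $\mu^{(S)}\otimes\psi(h)\,dh$ under $(x,h)\mapsto h(x,1)$ with $h$ supported away from $0$, and your estimate in the regime $|\xi'|\gtrsim|\xi|$ is exactly the paper's Case 2, giving $|\widehat{\mu}(\xi)|\lesssim|\xi|^{-\frac{d-1}{2}}$ there. The step that does not work as written is the complementary regime. If ``$|\xi'|$ small compared to $|\xi_{d+1}|$'' means $|\xi'|\leq c|\xi_{d+1}|$ for a fixed small $c$, then treating $\widehat{\mu^{(S)}}(h\xi')$ as a slowly varying amplitude and integrating by parts against $e^{-2\pi i h\xi_{d+1}}$ gives no decay: each $h$-derivative of the amplitude produces a factor as large as $2\pi R|\xi'|$ (where $S\subset B(0,R)$), so $N$ integrations by parts yield a bound of order $\bigl((1+R|\xi'|)/|\xi_{d+1}|\bigr)^N\lesssim (CRc)^N$, a constant rather than something decaying in $|\xi|$. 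The amplitude is ``slowly varying'' relative to the oscillation only by a fixed factor $c$, not by a factor tending to $0$.

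There are two standard repairs, and the paper uses the first. (i) Do not separate $\widehat{\mu^{(S)}}(h\xi')$ out as an amplitude: exchange the order of integration, so the inner $h$-integral is $\int\psi(h)e^{-2\pi i h\,(x,1)\cdot\xi}\,dh=\widehat{\psi}\bigl((x,1)\cdot\xi\bigr)$ with the \emph{full} linear phase; for $|\xi'|\leq c|\xi|$ with $c$ small in terms of $R$ one has $|(x,1)\cdot\xi|\geq|\xi_{d+1}|-R|\xi'|\gtrsim|\xi|$ uniformly in $x\in S$, and the Schwartz decay of $\widehat{\psi}$ gives rapid decay, hence the clean global bound $|\widehat{\mu}(\xi)|\lesssim|\xi|^{-\frac{d-1}{2}}$ with no interpolation and no Lipschitz gluing near a transition. (ii) Alternatively, keep your integration-by-parts formulation but place the cut at $|\xi'|\sim|\xi|^{1-\delta}$: then the amplitude's $h$-derivatives are $O(|\xi|^{1-\delta})$ and $N$ integrations by parts give $O(|\xi|^{-N\delta})$, while on $|\xi'|\gtrsim|\xi|^{1-\delta}$ the curvature decay gives only $|\xi|^{-(1-\delta)\frac{d-1}{2}}$; this is your $\epsilon$-lossy bound, and it does suffice for the proposition because $\dim_F$ is a supremum, so exhibiting, for every $s<d-1$, a measure with decay $|\xi|^{-s/2}$ already yields $\dim_F(C)\geq d-1$. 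So your overall plan is sound and essentially the paper's, but the near-axis regime needs either the order-of-integration swap or the $|\xi|^{1-\delta}$ cut to be made rigorous; as stated, with the cut at $|\xi'|\leq c|\xi|$, the integration-by-parts claim fails.
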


\begin{proposition}
\label{p32}
    $\dim_F(C) \leq d-1$.
\end{proposition}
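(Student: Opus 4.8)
The goal is to show that no measure $\mu \in \mathcal{M}(C)$ can decay faster than $|\xi|^{-(d-1)/2}$ in an averaged sense, so the plan is to exploit the dilation structure of the cone to transfer the problem to the base hypersurface $S$. The key observation is that $C$ is invariant under the scaling $(y,t) \mapsto (\lambda y, \lambda t)$ for $\lambda > 0$, and more importantly, restricting to a fixed height $t=1$, any measure on $C$ disintegrates into a family of pieces living on the dilates $hS \times \{h\}$. The strategy I would pursue mirrors the upper-bound argument for the light cone: starting from a hypothetical $\mu \in \mathcal{M}(C)$ with $|\widehat{\mu}(\xi)| \lesssim |\xi|^{-s/2}$, I would build a new measure $\nu$ on the base $S \subset \RR^d$ (or on an affine copy of it) by integrating the push-forwards of $\mu$ under a suitable family of maps — here, instead of the full orthogonal group $O(d)$ used for the sphere, one must use the fact that dilations $x \mapsto \lambda x$ composed with the conical structure move mass along $C$. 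The decay of $\widehat{\nu}$ would then be controlled by the decay of $\widehat{\mu}$, and since $S$ has non-vanishing Gaussian curvature, the Fourier dimension of $S$ is exactly $d-1$ (it is $d-1 = \dim S$ by the curvature hypothesis and the fact that $\dim_F$ of a hypersurface in $\RR^d$ with non-vanishing curvature is $d-1$), forcing $s \leq d-1$.

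More concretely, the first step is to localize: replace $C$ by a compact piece $C_0 = \{h(x,1) : x \in K, h \in [1,2]\}$ where $K$ is a compact subset of $\text{Int}\, S$ with non-vanishing Gaussian curvature, since Fourier dimension is determined by such compact pieces and a measure of fast decay on $C$ restricts (after multiplying by a smooth cutoff, which does not worsen decay) to one on $C_0$. The second step is to slice along the height coordinate: write the variable in $\RR^{d+1}$ as $(\xi', \xi_{d+1})$ and observe that for $\xi_{d+1}$ fixed in a bounded range, $\widehat{\mu}(\xi', \xi_{d+1})$ restricted appropriately behaves like the Fourier transform in $\RR^d$ of a measure on a dilate of $S$. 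The third step is the averaging construction: integrate $\mu$ against the dilation action to produce a measure supported near a single copy of $S$, keeping track of how the Fourier transform transforms under dilation (a dilation by $\lambda$ in physical space is a dilation by $\lambda^{-1}$ in frequency, with a Jacobian factor), and check that the resulting measure is nontrivial and finite. The fourth step is to read off the frequency decay of the averaged measure along the $d$ directions tangent to the $t=1$ slice and conclude $s \leq \dim_F(S) = d-1$.

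The main obstacle, as the introduction hints, is that the cone $C$ lacks the clean group symmetry of the light cone: the maps one averages over are not a compact group acting transitively and measure-preservingly on $C$, so the averaging operation has to be set up by hand and its output measure must be shown to be both nonzero and to inherit the decay rate without loss. In particular, one must be careful that the dilation average does not smear the mass to infinity (this is why the height is restricted to $[1,2]$) and that the curvature of $S$ is genuinely used — the argument must break for a flat $S$ (a piece of hyperplane), where $C$ would be a piece of a hyperplane through the origin and the bound would instead be $\dim_F = d-1$ still, so actually the subtlety is to get the \emph{sharp} constant $d-1$ rather than something weaker. I expect the technical lemma referenced in section \ref{sec_ex} to handle the precise stationary-phase / oscillatory-integral estimate that converts "non-vanishing Gaussian curvature on $S$" into "$\widehat{\nu}$ cannot decay faster than order $(d-1)/2$," and the bulk of the work in this proposition is verifying the hypotheses of that lemma for the averaged measure.
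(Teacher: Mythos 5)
You are missing the paper's key mechanism, and the family of maps you propose to average over is the wrong one. The paper's proof does not transfer $\mu$ to a measure on the base $S\subset\RR^d$ and then cite $\dim_F(S)=d-1$; the averaged measure $\nu$ stays on the cone $C\subset\RR^{d+1}$. After localizing to a compact piece $C'$ (Lemma \ref{lem23}) and putting a local chart of $S$ into Morse normal form $\phi(x)=(\varphi(x)^T,Q_m(x))$ via Lemmas \ref{lem63} and \ref{lem32}, the whole technical content is the construction (Lemma \ref{lem708}) of a $(d-1)$-parameter family of re-parametrizations $T_t$ of the base, chosen so that the linearization identity $Q_m(T_t(x))=(\phi(x),1)\cdot\eta(t)$, hence $hQ_m(T_t(x))=\Phi(x,h)\cdot\eta(t)$, holds (equations (\ref{eq6}) and (\ref{eq614})). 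That identity is exactly what yields $\widehat{\nu_t}(ke_d)=\widehat{\mu}(k\eta(t))$ with $|\eta(t)|\geq 1$, so the average $\nu$ inherits the decay $|\widehat{\nu}(ke_d)|\lesssim k^{-s/2}$ (Lemma \ref{lem53}); and, after exchanging the order of integration, the $t$-integral is a $(d-1)$-dimensional oscillatory integral with a non-degenerate stationary point at $t=x$ (non-degeneracy is where the curvature of $S$ enters, through Morse's lemma), so stationary phase gives the lower bound $|\widehat{\nu}(ke_d)|\gtrsim k^{-\frac{d-1}{2}}$ (Lemma \ref{lem54}); comparing the two bounds gives $s\leq d-1$. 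Your proposal replaces this by averaging over dilations $(y,t)\mapsto(\lambda y,\lambda t)$. Dilations are indeed linear and relate Fourier transforms ($\widehat{\mu}(\lambda\xi)$), but they form only a one-parameter family and move mass along the flat generator direction of $C$, not across the curved base directions; averaging over them cannot produce a $k^{-\frac{d-1}{2}}$ lower bound in any direction, and it does not produce a measure "near a single copy of $S$" whose $\RR^d$-Fourier decay is controlled by $\widehat{\mu}$ --- the radial projection $h(x,1)\mapsto x$ is nonlinear, so there is no formula relating the transform of its push-forward to $\widehat{\mu}$, and this is precisely the obstruction your sketch hand-waves past.

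Two further points. First, your concluding step "$s\leq\dim_F(S)=d-1$" presupposes that you have built a measure on $S$ itself with decay $|\xi'|^{-s/2}$ in $\RR^d$; your slicing step does not deliver this: fixing $\xi_{d+1}$ gives (up to a phase) the transform of the push-forward of $\mu$ under $h(x,1)\mapsto hx$, which is supported on the solid $d$-dimensional region $\bigcup_{h}hS$, not on a dilate of $S$. The paper's conclusion instead comes from the two-sided comparison of $\widehat{\nu}$ along the single direction $e_d$ singled out by the Morse chart. Second, your side remark about flat $S$ is incorrect: if $S$ is a piece of a hyperplane, then $C$ is a piece of a $d$-dimensional hyperplane through the origin and $\dim_F(C)=0$, not $d-1$; in the actual proof the curvature is not about "sharpness of the constant" but is needed so that Morse's lemma applies and the stationary point of $F_x(t)=Q_m(T_t(x))$ is non-degenerate with $|\det{\bf H}F_x|$ bounded away from $0$ (Lemma \ref{lem708}, part (\ref{i721})). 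Minor omission: the paper also disposes of the degenerate case $\text{spt}\,\mu_0\subset C_{\partial S}\cup\{0\}$ by Frostman's lemma before selecting the chart; your localization step should account for this case as well.
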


\begin{proposition}
\label{p33}
    $\dim_F(D) \geq d-1$. 
\end{proposition}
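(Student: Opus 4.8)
The plan is to prove the lower bound directly at the endpoint exponent: I will exhibit a single measure $\mu \in \mathcal{M}(D)$ satisfying $\sup_{\xi \in \RR^{d+1}} |\xi|^{\frac{d-1}{2}} |\widehat{\mu}(\xi)| < \infty$, which by the definition of $\dim_F$ immediately yields $\dim_F(D) \geq d-1$ (with no $\eps$-loss). The measure will be a tensor product: a curved piece living in the base $\RR^d$, times a smooth bump in the axis direction.

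First I would fix a point $p \in \operatorname{Int} S$, a coordinate chart of $\operatorname{Int} S$ around $p$, and a cutoff $\rho \in C_c^\infty$ supported in that chart with $\rho(p) > 0$; set $\sigma := \rho \, d\mathcal{H}^{d-1}|_{S}$, normalized to a probability measure. Since $\operatorname{Int} S$ is a $(d-1)$-dimensional hypersurface in $\RR^d$ with non-vanishing Gaussian curvature on the compact set $\operatorname{supp}\sigma$, the stationary phase method — this is exactly the content of Theorem \ref{th191} in ambient dimension $d$ (or rather its standard local version for a curved patch cut off by $\rho$) — gives the sharp decay
\begin{equation*}
    |\widehat{\sigma}(\eta)| \leq C_\sigma (1+|\eta|)^{-\frac{d-1}{2}}, \qquad \eta \in \RR^d .
\end{equation*}
Next I would pick $\phi \in C_c^\infty(\RR)$ with $\phi \geq 0$ and $\int \phi = 1$, and define $\mu := \sigma \otimes (\phi\, dh)$ on $\RR^d \times \RR = \RR^{d+1}$. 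Then $\mu$ is a probability measure whose support lies in $(\operatorname{supp}\sigma) \times (\operatorname{supp}\phi) \subset S \times \RR = D$, so $\mu \in \mathcal{M}(D)$, and the Fourier transform factors: $\widehat{\mu}(\xi', \xi_{d+1}) = \widehat{\sigma}(\xi') \, \widehat{\phi}(\xi_{d+1})$.

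It then remains to bound $|\xi|^{\frac{d-1}{2}} |\widehat{\mu}(\xi)|$ uniformly. For $|\xi| \leq 1$ this is at most $\|\mu\|_1 = 1$. For $|\xi| \geq 1$, split into two regimes. If $|\xi'| \geq |\xi|/2$, then using $|\widehat{\phi}| \leq 1$ and the curvature decay, $|\widehat{\mu}(\xi)| \leq |\widehat{\sigma}(\xi')| \leq C_\sigma |\xi'|^{-\frac{d-1}{2}} \lesssim |\xi|^{-\frac{d-1}{2}}$. If $|\xi'| < |\xi|/2$, then $|\xi_{d+1}| \geq |\xi|/2$; since $\widehat{\phi}$ is Schwartz, $|\widehat{\phi}(\xi_{d+1})| \leq C_N |\xi_{d+1}|^{-N}$ for every $N$, and taking $N > (d-1)/2$ gives $|\xi|^{\frac{d-1}{2}} |\widehat{\mu}(\xi)| \lesssim |\xi|^{\frac{d-1}{2}} |\xi_{d+1}|^{-N} \lesssim |\xi|^{\frac{d-1}{2} - N} \leq 1$. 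Combining the cases proves the claimed uniform bound.

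The only non-elementary ingredient is the curvature-based decay estimate for $\sigma$, namely that a compact curved patch of $S$ carries a measure with Fourier decay of order $(d-1)/2$; this is classical stationary phase for hypersurfaces with non-vanishing Gaussian curvature, and is the one place where the hypothesis on $\operatorname{Int} S$ is used. Everything else is the tensor-product bookkeeping above. In particular, the unboundedness of the cylinder in the axis direction creates no obstacle, since that direction contributes only through the rapidly decaying factor $\widehat{\phi}$; so I expect this proposition to have a short proof, and to be the easiest of the four propositions.
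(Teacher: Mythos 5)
Your proposal is correct and is essentially the paper's own argument: the paper likewise takes a curvature-carried measure $\mu^{(S)}$ on $S$ with decay $|\widehat{\mu^{(S)}}(\xi)|\lesssim|\xi|^{-\frac{d-1}{2}}$ (via Theorem \ref{th191}), forms the product measure $d\nu^{(D)}=d\mu^{(S)}\,\psi\,dh$ with a smooth bump $\psi$ in the axis variable, and splits into the same two regimes (base frequency dominant: use curvature decay; axis frequency dominant: use rapid decay of $\widehat{\psi}$). Your localization of $\sigma$ by a cutoff in a chart is only a cosmetic difference from the paper's direct appeal to Theorem \ref{th191} on $S$.
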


\begin{proposition}
\label{p34}
    $\dim_F(D) \leq d-1$.
\end{proposition}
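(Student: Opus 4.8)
The plan is to establish the upper bound $\dim_F(D) \leq d-1$ for the cylinder $D = S \times \RR$ by producing, for any measure $\mu \in \mathcal{M}(D)$ with $|\widehat{\mu}(\xi)| \lesssim |\xi|^{-s/2}$, a contradiction when $s > d-1$. Following the strategy outlined for the conical case, I would exploit the translation symmetry of $D$ in the last coordinate: the map $T_t(x,h) = (x, h+t)$ sends $D$ to itself for every $t \in \RR$. The first step is to reduce to a localized situation. Since Fourier dimension is determined by behavior near each point and $\operatorname{Int} S$ carries non-vanishing Gaussian curvature, I may assume $\mu$ is supported on a small piece $S_0 \times [-1,1]$ where $S_0 \subset \operatorname{Int} S$ is a graph $x_d = \psi(x_1,\dots,x_{d-1})$ over a ball, with the Hessian of $\psi$ invertible. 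The key obstruction to overcome is that if $s > d-1$, then decay of order $|\xi|^{-s/2}$ on all of $\RR^{d+1}$ is incompatible with $\mu$ being a nonzero measure living on a set that is flat in the $h$-direction.

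The main mechanism is an averaging/pushforward argument against the noncompact translation group. Concretely, I would form averaged measures $\nu_R = \frac{1}{2R}\int_{-R}^{R} (T_t)_* \mu \, dt$ (or a smoothed version with a bump function in $t$), whose Fourier transform is $\widehat{\nu_R}(\xi', \xi_{d+1}) = \widehat{\mu}(\xi', \xi_{d+1}) \cdot \widehat{\chi_R}(\xi_{d+1})$, where $\xi = (\xi', \xi_{d+1})$ and $\widehat{\chi_R}$ concentrates near $\xi_{d+1} = 0$ at scale $1/R$. As $R \to \infty$, $\nu_R$ converges (after normalization) to a measure that is a product of the projection $\pi_* \mu$ on $S_0$ with Lebesgue measure on $\RR$ in the $h$-variable; the decay hypothesis on $\widehat{\mu}$ would then force $\widehat{\pi_*\mu}$ to decay like $|\xi'|^{-s/2}$ in the $d$ variables $\xi' \in \RR^d$. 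Since the projected measure is a genuine measure on the hypersurface $S_0 \subset \RR^d$, which itself has Hausdorff dimension $d-1$, the bound $\dim_F(S_0) \leq \dim_H(S_0) = d-1$ yields $s \leq d-1$. Alternatively, and perhaps more cleanly, I would restrict $\widehat{\mu}$ to the slab $\{|\xi_{d+1}| \leq 1\}$ and integrate in $\xi_{d+1}$ to directly recover a decaying Fourier transform of a measure on $S_0 \subset \RR^d$.

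Carrying this out requires some care at two points. First, one must ensure that the limiting/projected object is a nonzero \emph{finite} measure on $S_0$ and that the relevant integrals converge; here the fact that $S_0$ is a curved graph over a bounded region and $\mu$ has finite mass should suffice, but the slab-integration version needs the decay estimate only for $|\xi_{d+1}| \le 1$ and hence is robust. Second, and this is where the technical lemma of section \ref{sec_ex} is presumably invoked, one needs to handle the interaction between the curvature of $S$ and the oscillatory behavior: rather than going through Hausdorff dimension of $S_0$, the cleaner route used for cones is to test $\widehat{\mu}$ against the normal directions to $S_0$ and use stationary phase along the curved directions to show that $|\widehat{\mu}(\xi)|$ cannot decay faster than order $|\xi|^{-(d-1)/2}$ in those directions without $\mu$ vanishing. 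I would expect the main obstacle to be making the translation-averaging argument produce a quantitative, rather than merely qualitative, statement — i.e., tracking constants so that the decay exponent $s$ is genuinely controlled by $d-1$ rather than by $d-1$ plus an error; adapting the cylinder's extra translation symmetry (as the excerpt hints, ``We can further modify the strategy in the case of cylinders'') should in fact make this \emph{easier} than the conical case, since translations, unlike the partial rotation group used for general cones, act globally and transitively on the flat $\RR$ factor.
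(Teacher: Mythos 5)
Your core idea is a genuinely different route from the paper's, and it does work --- in fact more directly than your write-up suggests. The paper never uses the translation symmetry in $h$: it localizes $S$ to a Morse normal form, constructs re-parametrizations $T_t$ along the \emph{curved} base directions (Lemma \ref{lem708}), averages the push-forwards against a bump in $t$, and then plays the inherited upper bound $|\widehat{\nu}(ke_d)| \lesssim k^{-s/2}$ (Lemma \ref{lem1042}) against a stationary-phase lower bound $|\widehat{\nu}(ke_d)| \gtrsim k^{-\frac{d-1}{2}}$ (Lemma \ref{lem1065}), which forces $s \leq d-1$. Your route instead exploits the flat factor, and the precise statement you need is just the slice identity $\widehat{\pi_*\mu}(\xi') = \widehat{\mu}(\xi', 0)$ for the projection $\pi(x,h)=x$: the hypothesis then gives $|\widehat{\pi_*\mu}(\xi')| \lesssim |\xi'|^{-s/2}$ for a nonzero finite measure supported on a compact piece of $S$, a set of Hausdorff dimension $d-1$, so $\min(s,d) \leq \dim_F \leq \dim_H = d-1$ and hence $s \leq d-1$ (the case $s > d$ is also impossible, since it would make $\pi_*\mu$ absolutely continuous while supported on a Lebesgue-null set). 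This avoids Morse's lemma, the $T_t$ construction, and stationary phase entirely; what it buys the paper to do it the harder way is a scheme that also works for the cone, where no single direction of $\RR^{d+1}$ is flat for the whole surface, whereas your argument is specific to the cylinder.

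A few soft spots in your execution. First, the $R \to \infty$ averaging is both unnecessary and delicate: with the $\frac{1}{2R}$ normalization, $\nu_R$ tends weakly to $0$, and without it the limit is an infinite measure, so there is no honest limiting element of $\mathcal{M}(D)$ to work with --- just evaluate on the hyperplane $\xi_{d+1}=0$ as above. Second, your slab variant (integrating $\widehat{\mu}$ in $\xi_{d+1}$ over $[-1,1]$) yields $\pi_*(f\mu)$ with $f(h) = \frac{\sin(2\pi h)}{\pi h}$, a signed measure that could in principle vanish; if you want a localized version, first multiply $\mu$ by a nonnegative Schwartz bump in $h$ (Lemma \ref{lem23}) and then project, rather than integrating the transform. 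Third, the claim in your last paragraph that one must ``use stationary phase along the curved directions to show $|\widehat{\mu}(\xi)|$ cannot decay faster than $|\xi|^{-(d-1)/2}$'' is not needed for your argument and is false as stated for $\mu$ itself: in the paper that lower bound holds only for the specially constructed averaged measure $\nu$, not for an arbitrary measure on $D$. Finally, treat separately the case $\text{spt}\,\mu_0 \subset \partial S \times \RR$ (the paper does this via Frostman's lemma), or note that your localization to a compact graph piece over $\text{Int}\, S$ guarantees the projected support has Hausdorff dimension at most $d-1$, which is all the slice argument requires.
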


\begin{remark}
The following theorem by Littman, which comes from modifying the proof of Theorem \ref{th191}, implies that $\dim_F(M) \geq k$ for general hypersurfaces $M \subset \RR^{d+1}$ with $k$ non-zero principal curvatures.

\begin{thm}[\cite{Littman}]
\label{t33}
    Suppose at least $k$ of the principal curvatures are not zero at all points $p \in M$. There exists a measure $\mu \in \mathcal{M}(M)$ such that $\sup_{\xi \in \RR^{d+1}}|\xi|^{\frac{k}{2}}|\widehat{\mu}(\xi)| < \infty $.
\end{thm}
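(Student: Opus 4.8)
The plan is to prove this by a localized stationary-phase estimate, following the strategy behind Theorem~\ref{th191} but exploiting only $k$ of the curvature directions. Since we only need to produce \emph{one} measure, I would work near a single point. Fix $p_0\in M$ and a chart in which a neighborhood of $p_0$ in $M$ is the graph $\{(x,\phi(x)):x\in U\}$ of a smooth function $\phi$ on an open set $U\subset\RR^d$, with $p_0=(x_0,\phi(x_0))$. For a graph, the second fundamental form is $h=(1+|\nabla\phi|^2)^{-1/2}\,\nabla^2\phi$ in coordinates and the Weingarten map is $S=g^{-1}h$ with $g$ positive definite, so the number of non-vanishing principal curvatures at $p_0$ equals $\operatorname{rank}\nabla^2\phi(x_0)=:r$, and by hypothesis $r\ge k$. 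After an orthogonal change of the horizontal variables --- which sends the graph of $\phi$ to the graph of $\phi\circ R^{-1}$ and conjugates the Hessian --- I may assume $\nabla^2\phi(x_0)$ is diagonal with its $r$ non-zero entries in the first $r$ positions. Writing $x=(y,z)\in\RR^{r}\times\RR^{d-r}$ and shrinking $U$, continuity gives $|\det\nabla^2_y\phi(x)|\ge\delta_0>0$ on $U$. Now pick $\psi\in C_c^\infty(U)$ with $\psi\ge0$, $\psi\not\equiv0$, and let $\mu$ be the push-forward of $\psi(x)\,dx$ to $M$; then $\mu\in\mathcal{M}(M)$ is a nonzero finite measure with
\[
 \widehat\mu(\xi)=\int_U e^{-2\pi i\,(x\cdot\xi'+\phi(x)\xi_{d+1})}\,\psi(x)\,dx,\qquad \xi=(\xi',\xi_{d+1})\in\RR^d\times\RR.
\]

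It suffices to show $|\widehat\mu(\xi)|\le C\,(1+|\xi|)^{-r/2}$: since $r\ge k$ this gives $|\xi|^{k/2}|\widehat\mu(\xi)|\le C$ for $|\xi|\ge1$, while for $|\xi|\le1$ trivially $|\widehat\mu(\xi)|\le\norm{\psi}_{L^1}$. So put $\lambda:=|\xi|\ge1$, $\omega:=\xi/\lambda\in\mathbb{S}^d$, and write the phase as $\lambda\Phi_\omega(x)$ with $\Phi_\omega(x)=x\cdot\omega'+\phi(x)\omega_{d+1}$. I split according to the size of $\omega_{d+1}$. If $|\omega_{d+1}|\le c:=\bigl(4(1+\norm{\nabla\phi}_{L^\infty(U)})\bigr)^{-1}$, then on $\operatorname{supp}\psi$
\[
 |\nabla_x\Phi_\omega(x)|=|\omega'+\omega_{d+1}\nabla\phi(x)|\ge\sqrt{1-\omega_{d+1}^2}-|\omega_{d+1}|\,\norm{\nabla\phi}_{L^\infty}\ge c_0>0,
\]
so the phase is non-stationary, and $N$-fold integration by parts (using that $\nabla_x\Phi_\omega$ is bounded below and all derivatives of $\phi$ and $\psi$ are bounded) gives $|\widehat\mu(\xi)|\le C_N\lambda^{-N}$ uniformly in such $\omega$; taking $N=\ceil{r/2}$ yields $|\widehat\mu(\xi)|\le C\lambda^{-r/2}$.

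If instead $|\omega_{d+1}|\ge c$, I freeze $z$ and regard
\[
 I_\omega(z):=\int_{\RR^r}e^{-2\pi i\lambda\Phi_\omega(y,z)}\,\psi(y,z)\,dy
\]
as an oscillatory integral in $y$ whose Hessian $\nabla^2_y\Phi_\omega(y,z)=\omega_{d+1}\,\nabla^2_y\phi(y,z)$ is non-degenerate with $|\det|\ge c^{\,r}\delta_0$ on $\operatorname{supp}\psi$. The stationary-phase estimate --- in the version whose implied constant depends only on a lower bound for $|\det|$ of the Hessian, upper bounds for finitely many derivatives of the phase and amplitude, and the diameter of the support --- yields $|I_\omega(z)|\le C\lambda^{-r/2}$ with $C$ independent of $z$ and of $\omega$ in this range, since the $y$-derivatives of $\Phi_\omega$ of order $\ge2$ are bounded multiples of those of $\phi$, its first $y$-derivative is bounded, $|\det\nabla^2_y\Phi_\omega|\ge c^{\,r}\delta_0$, and $\psi$ has bounded derivatives and fixed compact support. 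Integrating over the bounded set $\{z:(y,z)\in\operatorname{supp}\psi\text{ for some }y\}$ gives $|\widehat\mu(\xi)|\le\int|I_\omega(z)|\,dz\le C'\lambda^{-r/2}$. Combining the two ranges of $\omega_{d+1}$ with the trivial bound for $|\xi|\le1$ proves $\sup_\xi|\xi|^{k/2}|\widehat\mu(\xi)|<\infty$.

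The main obstacle is exactly this uniform stationary-phase bound: the Hessian has only rank $r$, and as $z$ and $\omega$ vary the critical point in $y$ (when one exists) moves and may run to $\partial(\operatorname{supp}\psi)$. The standard remedy is to split the $y$-ball into a neighborhood of the critical point --- where the Morse lemma, applied with $(z,\omega_{d+1})$ as parameters and tracked so that the change of variables depends smoothly on them, reduces $\Phi_\omega$ to a non-degenerate quadratic plus a controlled remainder and a Gaussian/van der Corput computation gives $\lambda^{-r/2}$ --- together with its complement, where $|\nabla_y\Phi_\omega|$ is bounded below and repeated integration by parts gives rapid decay. All constants remain uniform because they depend only on the finitely many bounds above, each uniform over the compact ranges $z\in\pi_z(\operatorname{supp}\psi)$ and $|\omega_{d+1}|\in[c,1]$. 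Making this parameter book-keeping precise is the technical heart; everything else is the partition into the two frequency regimes.
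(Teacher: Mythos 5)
Your proposal is correct: localizing to a graph patch near one point, splitting frequencies into the non-stationary regime (small $|\omega_{d+1}|$, handled by integration by parts) and the regime where one does stationary phase in the $r\ge k$ non-degenerate directions with the remaining variables frozen, with constants uniform in the parameters, is precisely the standard proof of Littman's estimate. The paper gives no proof of this statement --- it cites \cite{Littman} and remarks that it follows by modifying the stationary-phase proof of Theorem \ref{th191} --- and your argument is essentially that same approach.
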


Propositions \ref{p31} and \ref{p33} can be concluded from Theorem \ref{t33} since the theorem implies all orientable hypersurfaces with $d-1$ non-zero principal curvatures, including cones and cylinders, have Fourier dimension at least $d-1$. Nonetheless, we include simpler proofs that provide lower bounds on the Fourier dimensions of cones and cylinders.
\end{remark}

\section{Lower bounds on the Fourier dimensions}
\label{sec4}

In this section, we provide lower bounds on the Fourier dimensions of cones and cylinders by proving Propositions \ref{p31} and \ref{p33}. The idea is to construct measures with the needed Fourier decay.

\begin{proof}[Proof of Proposition \ref{p31}]
    By Theorem \ref{th191}, there exist a measure $\mu^{(S)} \in \mathcal{M}(S)$ and $c_{\mu} > 0$, such that $|\widehat{\mu^{(S)}}(\xi)| \leq c_{\mu}|\xi|^{-\frac{d-1}{2}}$. Let $\psi \in C_0^{\infty}(\RR)$ with $\text{spt } \psi \subset [1, 2]$, $\psi \geq 0$, and $\int \psi(h) dh=1$. For non-negative Borel functions $f$ on $\RR^{d+1}$, we define $\nu^{(C)} \in \mathcal{M}(C)$ such that 
\begin{equation*}
    \int f d\nu^{(C)} : = \int \psi(h) \int f(h(y, 1)) d\mu^{(S)}(y)dh.
\end{equation*}
We claim $\nu^{(C)}$ is well defined and supported on $C$. 
\begin{itemize}
    \item Let $T: L^{\infty}(\RR^{d+1}) \to \RR$ be 
    $$Tf := \int \psi(h) \int f(h(y, 1)) d\mu^{(S)}(y)dh.$$
    If $ \norm{f}_{L^{\infty}(\RR^{d+1})} = 1$,
    \begin{equation*}
        \begin{split}
            \left|Tf\right| & \leq  \int \psi(h) \int |f(h(y, 1))| d\mu^{(S)}(y)dh \\
            & \leq \int \psi(h) \int d\mu^{(S)} (y)dh \\
            & \leq 1.
        \end{split}
    \end{equation*}
    In addition, the map $T$ is linear. Therefore, $T$ is a bounded linear functional, and there is a unique measure $\nu^{(C)}$ on $\RR^{d+1}$ such that $Tf = \int_{\RR^{d+1}} f d\nu^{(C)}$. By letting $f=1$, we note that $\nu^{(C)}$ is a probability measure.
    \item Let $z \in C^c$, then there is a neighborhood $U$ such that $z \in U \subset C^c$. Therefore, $\mathbf{1}_{U}(h(y, 1))=0$ for $y \in S$, $h \in \RR$. Then $\nu^{(C)}(U) = 0$, so that $z \in U \subset (\text{spt } \nu^{(C)})^c$.
\end{itemize}

We write the direction $\eta= (\eta', \eta_{d+1}) \in \RR^{d} \times \RR$ with $|\eta|=1$. There are two cases to consider.

Case 1: $|\eta'| \leq c$ for a small constant $c>0$, then for all $N \in \NN$,
\begin{equation*}
    \begin{split}
        |\widehat{\nu^{(C)}}(k\eta)| &=  \left|\int \psi(h) \int e^{-2\pi i h(y, 1) \cdot k(\eta', \eta_{d+1})} d\mu^{(S)}(y) dh\right| \\
        & \leq   \int \left| \int \psi(h) e^{-2\pi i hk(y, 1) \cdot (\eta', \eta_{d+1})} dh \right|  d\mu^{(S)}(y) \\
        & \leq C_N  \int \left|k(y, 1)\cdot(\eta', \eta_{d+1})  \right|^{-N} d\mu^{(S)}(y), 
    \end{split}
\end{equation*}
where $C_N = \sup_{\xi \in \RR} |\xi|^N |\widehat{\psi}(\xi)|$. Suppose $S \subset B(0, R) \subset \RR^{d}$, then $|(y, 1)\cdot (\eta', \eta_{d+1})| \geq |\eta_{d+1}| - |y\cdot \eta'|\geq \sqrt{1-c^2}-Rc$. We may choose $c$ small enough so that 
$$|(y, 1)\cdot (\eta', \eta_{d+1})| \geq 2^{-1}.$$ So in this case,
$$|\widehat{\nu^{(C)}}(k\eta)| \leq 2^N C_N k^{-N}.$$

Case 2: $|\eta'| \geq c$, then
\begin{equation*}
    \begin{split}
        |\widehat{\nu^{(C)}}(k\eta)| = & \left| \int \psi(h) \int e^{-2\pi i h(y, 1) \cdot k (\eta', \eta_{d+1})} d\mu^{(S)} (y) dh\right| \\
        = & \left|\int e^{-2 \pi i h k \eta_{d+1}} \psi(h) \int e^{-2\pi i h ky \cdot \eta'} d\mu^{(S)}(y) dh\right| \\
        \leq & \int  \left|\psi(h) \widehat{\mu^{(S)}}(hk\eta')\right|  dh\\
        \leq  & c_{\mu} \int |\psi(h)| |hk\eta'|^{-\frac{d-1}{2}}  dh\\
        \leq &  c_{\mu} (ck)^{-\frac{d-1}{2}}
    \end{split}
\end{equation*}
since $h \in [1, 2]$ and $|\eta'| \geq c$.
\end{proof}

We have a similar proof for the cylinder $D$.

\begin{proof}[Proof of Proposition \ref{p33}]
    By Theorem \ref{th191}, there exist a measure $\mu^{(S)} \in \mathcal{M}(S)$ and $c_{\mu} > 0$, such that $|\widehat{\mu^{(S)}}(\xi)| \leq c_{\mu}|\xi|^{-\frac{d-1}{2}}$. Let $\psi \in C_0^{\infty}(\RR)$ with $\psi \geq 0$, and $\int \psi(h) dh=1$. We define $\nu^{(D)} \in \mathcal{M}(D)$ as a product measure
\begin{equation*}
    d \nu^{(D)} := d\mu^{(S)} \psi dh.
\end{equation*}

We write the direction $\eta= (\eta', \eta_{d+1}) \in \RR^{d} \times \RR$ with $|\eta|=1$. There are two cases to consider.

Case 1: $|\eta'| \leq c$ for a small constant $c>0$, then for all $N \in \NN$,
\begin{equation*}
    \begin{split}
        |\widehat{\nu^{(D)}}(k\eta)| &=  \left|\int  \int e^{-2\pi i (y, h) \cdot k(\eta', \eta_{d+1})} d\mu^{(S)}(y) \psi(h) dh\right| \\
        & \leq   \int \left| e^{-2 \pi i k y \cdot \eta'} \int  e^{-2\pi i h k   \eta_{d+1}} \psi(h) dh \right|   d\mu^{(S)}(y) \\
        & \leq C_N  \int \left|k\eta_{d+1} \right|^{-N} d\mu^{(S)}(y), 
    \end{split}
\end{equation*}
where $C_N = \sup_{\xi \in \RR} |\xi|^N |\widehat{\psi}(\xi)|$. As $|\eta_{d+1}| \geq \sqrt{1-c^2}$, by choosing $c$ such that $\sqrt{1-c^2} \geq 2^{-1}$,
$$|\widehat{\nu^{(C)}}(k\eta)| \leq 2^N C_N k^{-N}.$$

Case 2: $|\eta'| \geq c$, then
\begin{equation*}
    \begin{split}
        |\widehat{\nu^{(D)}}(k\eta)| = & \left| \int  \int e^{-2\pi i (y, h) \cdot k (\eta', \eta_{d+1})} d\mu^{(S)} (y) \psi(h) dh\right| \\
        = & \left|\int e^{-2 \pi i h k \eta_{d+1}} \psi(h) \int e^{-2\pi i yk \cdot \eta'} d\mu^{(S)}(y) \psi(h) dh\right| \\
        \leq & \int  \left|\psi(h) \widehat{\mu^{(S)}}(k\eta')\right|  dh\\
        \leq  & c_{\mu} \int |\psi(h)| |k\eta'|^{-\frac{d-1}{2}}  dh\\
        \leq &  c_{\mu} (ck)^{-\frac{d-1}{2}}
    \end{split}
\end{equation*}
since $|\eta'| \geq c$.
\end{proof}

\section{Upper bounds on the Fourier dimension: Examples}
\label{sec267}

Harris's method heavily uses the rotational symmetry for the Euclidean light cone. In this section, we analyze two cones and one cylinder in 3 dimensions that do not have this property.

\begin{definition}
\label{d21}
    Let $j$ be the index of hypersurfaces. Let $I_{j} \subset \RR$ be an interval containing $0$, and $$S_{j}: = \{(\gamma_j(x), x^2), x \in I_{j}\} \subset \RR^2,$$
where $\gamma_j: I_{j} \to \RR$ is smooth. A coordinate map of $S_{j}$ is $\phi_{j}: I_{j} \to \RR^2$, where $$\phi_{j}(x) := (\gamma_j(x), x^2).$$ A coordinate map of the cone $C_{j}$ is $\Phi_{j}: I_{j} \times \RR \to \RR^3$ with 
\begin{equation}
\label{eq117}
    \Phi_{j}(x, h) : = h(\gamma_j(x), x^2, 1) = h(\phi_{j}(x), 1).
\end{equation}

The two cones are:
\begin{enumerate}[a.]
    \item parabolic cone $C_{1}$, with $I_{1} = \RR$, $\gamma_1 : \RR \to \RR^2$ is $\gamma_1(x) := x$.
    \item a perturbed parabolic cone $C_{2}$, where $I_{2}$ is a sufficiently small neighborhood of $0$, $\gamma_2: I_{2} \to \RR^2$ is $\gamma_2(x):=x+x^3$.
\end{enumerate}

The parabolic cylinder $D_1$ is also generated by $S_1$, with a coordinate map $\widetilde{\Phi}_{1}: \RR \times \RR \to \RR^3$ given by
\begin{equation}
\label{eq286}
    \widetilde{\Phi}_{1}(x, h) : = (x, x^2, h).
\end{equation}

\end{definition}

The curve $S_{j}$ has non-vanishing Gaussian curvatures for $j = 1, 2$. We also note that the parabolic cone $C_{1}$ can be obtained from scaling and rotating the Euclidean light cone. We aim to show the following theorems in this section.
\begin{theorem}
\label{t2}
\begin{enumerate}
    \item $\dim_F(C_{j})=1$ for $j=1, 2$.
    \item $\dim_F(D_1) = 1$.
\end{enumerate}
    
\end{theorem}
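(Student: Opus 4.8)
The lower bounds $\dim_F(C_j)\ge 1$ ($j=1,2$) and $\dim_F(D_1)\ge 1$ are instances of Propositions \ref{p31} and \ref{p33} with $d=2$, since $S_1,S_2$ have non-vanishing Gaussian curvature and $D_1$ is the cylinder over $S_1$. So the content is the three matching upper bounds; equivalently, for each of $C_1,C_2,D_1$ there is no probability measure $\mu$ with $|\widehat\mu(\xi)|\le A(1+|\xi|)^{-\sigma}$ for some fixed $\sigma>1/2$.

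Two of the three I would settle directly, bypassing the averaging machinery. For $D_1$: if $\mu\in\mathcal M(D_1)$ and $p(y_1,y_2,y_3)=(y_1,y_2)$, then $\widehat\mu(\xi_1,\xi_2,0)=\widehat{p_*\mu}(\xi_1,\xi_2)$ because the phase does not involve $y_3$; here $p_*\mu$ is a measure on $p(D_1)=\{(x,x^2):x\in\RR\}=:P$, so any decay rate $\sigma$ for $\widehat\mu$ is inherited by $\widehat{p_*\mu}$, forcing $2\sigma\le\dim_F(P)\le\dim_H(P)=1$; with Proposition \ref{p33}, $\dim_F(D_1)=1$. For $C_1$: the image of $\Phi_1$ lies on the quadric $\{Y_1^2=Y_2Y_3\}$, a nondegenerate quadratic cone, hence equal to $T\mathcal C$ for an invertible linear map $T$ of $\RR^3$, where $\mathcal C$ is the light cone; since $\widehat{T_*\nu}(\xi)=\widehat\nu(T^{\top}\xi)$ and $|T^{\top}\xi|\asymp|\xi|$, invertible linear maps preserve Fourier decay rates, so $\dim_F(C_1)\le\dim_F(T\mathcal C)=\dim_F(\mathcal C)=1$ by Theorem \ref{t16} with $d=2$; with Proposition \ref{p31}, $\dim_F(C_1)=1$. (One could alternatively run the argument for $C_2$ below on $C_1$, which is presumably why $C_1$ appears as an example.)

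The case $C_2$ is the heart of the matter: writing $S_2$ as a graph $\{(y,\psi(y))\}$ one finds $\psi=(\gamma_2^{-1})^2$ analytic with $\psi''(0)\ne0$ but not a polynomial (because $\gamma_2^{-1}$ is not), so $C_2$ is a genuine cubic cone with no nontrivial affine symmetry and $S_2$ has no continuous symmetry group, and neither shortcut above applies. Following the strategy of Proposition \ref{p32} — an adaptation of Harris's light-cone proof — I would proceed in four steps. (i) Starting from a hypothetical $\mu\in\mathcal M(C_2)$ with $|\widehat\mu(\xi)|\le A(1+|\xi|)^{-\sigma}$, pass to the dilation average $\mu^\flat:=\int\psi_0(\lambda)(\delta_\lambda)_*\mu\,d\lambda$ with $\delta_\lambda(y)=\lambda y$ and $\psi_0\in C_0^\infty(\RR)$ supported in $[1,2]$; this stays in $\mathcal M(C_2)$ since $C_2$ is a cone, keeps the decay rate $\sigma$ since $\lambda\asymp1$, and has absolutely continuous push-forward to the $y_3$-axis with smooth, compactly supported density bounded away from $0$. (ii) Average additionally over a well-chosen finite-dimensional family of affine push-forwards in order to regularise the height-slices of $\mu^\flat$ — the delicate point, discussed below. (iii) Disintegrate over the height $t=y_3$ and use $\Phi_2$ to write $\widehat{\mu^\flat}(\xi',\xi_3)=\int\eta(t)\,\widehat{(\phi_2)_*\nu_t}(t\xi')\,e^{-2\pi i t\xi_3}\,dt$ with $\nu_t$ probability measures on $I_2$, and for $\xi'=R\omega$ with $\omega$ in the open set of directions where $z\mapsto z\cdot\omega$ has a nondegenerate critical point on $S_2$, insert the stationary-phase expansion $\widehat{(\phi_2)_*\nu_t}(R\omega)=R^{-1/2}b_t(\omega)e^{-2\pi iRh_{S_2}(\omega)}+O(R^{-3/2})$ (valid by non-vanishing curvature of $S_2$, with $h_{S_2}$ the support function), so that $\widehat{\mu^\flat}(R\omega,\cdot)$ becomes, to leading order, a one-dimensional Fourier transform in $t$. (iv) Conclude $\sigma\le1/2$ from the fact that a nonzero finite measure supported in a bounded interval has Fourier transform that neither decays faster than a fixed polynomial rate (Paley--Wiener) nor has constant modulus at infinity (Wiener) — equivalently, by producing a probability measure on $S_2$ whose Fourier transform on $\RR^2$ decays faster than $|\zeta|^{-1/2}$, contradicting $\dim_F(S_2)\le\dim_H(S_2)=1$. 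With the lower bound this yields $\dim_F(C_2)=1$.

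The main obstacle is step (ii), and this is the role of the technical lemma of Section \ref{sec_ex}. For the light cone, averaging over $O(d)$ makes every height-slice equal to the single uniform measure on the sphere, so the slices carry no $t$-dependence and step (iii) is exact; $C_2$ has no comparable symmetry group, the slices $\nu_t$ genuinely vary with $t$ and need not be absolutely continuous, and the naive remedies — Plancherel in $\xi_3$, projecting out a coordinate, a crude parabolic rescaling near a ruling — each lose a full half-derivative and yield only the useless bound $\dim_F(C_2)\le 2$. The family in step (ii) should instead be built from the parabolic shears and scalings $(y_1,y_2,y_3)\mapsto(\alpha y_1+\beta y_3,\ \alpha^2 y_2+2\alpha\beta y_1+\beta^2 y_3,\ y_3)$, which are exact linear symmetries of the osculating quadric cone $C_1$ and whose failure to preserve $C_2$ is therefore governed only by the $O(|x|^3)$ deviation of $S_2$ from a parabola on a fixed compact sub-arc; the work is to select this family, control that error, and keep the net loss in the decay exponent below any prescribed $\varepsilon$ — after which $\dim_F(S_2)\le1$ forces $\sigma\le1/2+\varepsilon$ for all $\varepsilon>0$, hence $\dim_F(C_2)\le1$. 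Altogether $\dim_F(C_1)=\dim_F(C_2)=1$ and $\dim_F(D_1)=1$.
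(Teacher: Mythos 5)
Your lower bounds and two of the three upper bounds are fine, and the latter two are genuinely different from (and shorter than) the paper's arguments: for $D_1$ the projection $p(y_1,y_2,y_3)=(y_1,y_2)$ sends any $\mu\in\mathcal M(D_1)$ to a nonzero measure on the closed parabola $P$ with $\widehat{p_*\mu}(\zeta)=\widehat\mu(\zeta,0)$, so $s\le \dim_F(P)\le\dim_H(P)=1$ (the paper instead runs the averaging/stationary-phase argument of Proposition \ref{prop632}); for $C_1$, the containment $C_1\subset\{Y_1^2=Y_2Y_3\}=T\mathcal C$ with $T$ linear and invertible, monotonicity of $\dim_F$ under inclusion, invariance under invertible linear maps, and Theorem \ref{t16} give $\dim_F(C_1)\le 1$ (note $C_1$ is a proper subset of the quadric --- it misses the punctured $Y_2$-axis --- so you need the inclusion, not equality; the paper instead proves Proposition \ref{prop24} directly as a template for the general method).

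The genuine gap is $C_2$, which is the whole point of this example, and your own text concedes it: steps (ii)--(iii) of your program require the height-slices of the dilation-averaged measure to be regular enough for a stationary-phase expansion of $\widehat{(\phi_2)_*\nu_t}$, and the disintegrated slices of an arbitrary measure in $\mathcal M(C_2)$ need not be absolutely continuous; your proposed fix (averaging over the affine symmetries of the osculating quadric cone $C_1$ and controlling the $O(|x|^3)$ error so the exponent loss is $<\varepsilon$) is left entirely unexecuted, and it is not clear it can be made to work, since those shears do not map $C_2$ into itself and the error enters the phase at the same scale where the stationary-phase gain is extracted. The paper avoids this entirely by a different mechanism (Proposition \ref{prop25} plus Lemma \ref{lem26}, generalized in Section \ref{sec_ex}): it constructs re-parametrizations $T_t$ and an explicit vector $\eta_2(t)$ with the \emph{exact} identity $(T_t(x))^2=(x+x^3,x^2,1)\cdot\eta_2(t)$, so that the averaged push-forward $\nu$ satisfies, along $\xi\parallel e_2$, both $|\widehat\nu(ke_2)|\lesssim k^{-s/2}$ (from the assumed decay of $\mu$, since $\widehat{\nu_t}(ke_2)=\widehat\mu(k\eta_2(t))$) and $|\widehat\nu(ke_2)|\gtrsim k^{-1/2}$ (stationary phase in the averaging variable $t$, where the phase $F_x(t)=T_t(x)^2$ has a single nondegenerate critical point at $t=x$), giving $s\le1$ with no $\varepsilon$-loss, no slicing, and no regularity assumption on $\mu$ beyond Lemma \ref{lem23}. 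As written, your proposal therefore does not prove $\dim_F(C_2)\le1$, so Theorem \ref{t2}(1) for $j=2$ remains unproved.
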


We will show that an upper bound on the Fourier dimension of all three surfaces is $1$ in the following three subsections. This upper bound combined with Propositions \ref{p31} and \ref{p33} yields Theorem \ref{t2}. 

\subsection{Parabolic cone $C_{1}$}

\label{ex21}

We will show an upper bound on $\dim_F(C_{1})$, where the cone $C_{1}$ and the associated coordinate maps are defined in Definition \ref{d21}.

\begin{proposition}
\label{prop24}
    $\dim_F(C_{1}) \leq 1$. 
\end{proposition}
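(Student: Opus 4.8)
The plan is to show that every measure $\mu \in \mathcal{M}(C_1)$ fails to have Fourier decay of order $|\xi|^{-s/2}$ for any $s > 1$; concretely, I would show $\limsup_{|\xi| \to \infty} |\xi|^{1/2+\delta}|\widehat{\mu}(\xi)| = \infty$ for every $\delta > 0$. Following Harris's strategy (as adapted in the paper), the idea is to exploit the scaling/dilation structure of the cone $C_1$ together with the one-dimensional family of symmetries it \emph{does} possess. Since $C_1 = \{h(x, x^2, 1) : x \in \RR, h \in \RR\}$ is a dilation-invariant set (it is a cone: $\lambda C_1 = C_1$), the first step is to record that if $\mu \in \mathcal{M}(C_1)$, then the dilates $\mu_\lambda$ (pushforward under $z \mapsto \lambda z$) are again supported on $C_1$, and $\widehat{\mu_\lambda}(\xi) = \widehat{\mu}(\xi/\lambda)$ up to normalization. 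More importantly, $C_1$ is invariant under the one-parameter parabolic rescaling $\mathcal{P}_t : (\xi_1, \xi_2, \xi_3) \mapsto (t\xi_1, t^2 \xi_2, \xi_3)$ composed with a dilation in $h$ — this is the residual symmetry group replacing the full rotational symmetry of the light cone.

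The key steps, in order, would be: (1) Reduce to studying the restriction of $\mu$ to a compact truncated piece $C_1 \cap \{1 \le h \le 2\}$ (a piece of $\mu$ of positive mass must survive on some such annulus, up to dilation); parametrize this piece by $(x,h)$ via $\Phi_1$ and write $d\mu = a(x,h)\, dx\, dh$ plus a possibly singular part — actually one should argue directly with the measure without assuming absolute continuity. (2) Average $\mu$ over the symmetry group: form $\nu = \int \mathcal{P}_t^* \mu \,\rho(t)\,dt$ for a suitable bump $\rho$, producing a new measure on $C_1$ whose Fourier transform is an average of $\widehat{\mu}$ along parabolic dilates of a ray; the averaging smooths $\mu$ in the ``flat'' direction of the cone while leaving the curved directions alone. (3) Choose a sequence of frequencies $\xi^{(k)}$ going to infinity along the ``bad'' direction (the direction conormal to the rulings of the cone, i.e.\ roughly $(0,0,1)$-ish or along the axis where the surface looks flat), and use stationary phase / the known sharp decay for the curved parabola factor to show $|\widehat{\nu}(\xi^{(k)})| \gtrsim |\xi^{(k)}|^{-1/2}$, no better. (4) Combine a lower bound on $|\widehat{\nu}|$ along this sequence with the relation between $\widehat{\nu}$ and $\widehat{\mu}$ (the averaging is an $L^1$ operation, so it cannot improve the sup-norm-type decay) to conclude that $\mu$ itself cannot decay faster than order $1/2$, hence $\dim_F(C_1) \le 1$.

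I expect the main obstacle to be step (3)–(4): making rigorous the claim that the averaged measure $\nu$ genuinely ``sees'' the flatness of the cone in the ruling direction and therefore has a hard lower bound $|\widehat{\nu}(\xi)| \gtrsim |\xi|^{-1/2}$ along a suitable sequence — this requires an honest stationary-phase lower bound (not just an upper bound), controlling the non-stationary and boundary contributions, and arguing that no cancellation across the $t$-average destroys the main term. A secondary subtlety is that $\mu$ is an arbitrary finite Borel measure, not a nice density, so the averaging and the extraction of a lower bound must be phrased measure-theoretically (e.g.\ via a weak-$*$ compactness or a direct pairing argument against test functions concentrated near a fiber of the cone). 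The dilation-invariance reduction in step (1) is routine, as is the $L^1$-averaging bookkeeping in step (2); the genuine content is the ruled/flat direction producing the $1/2$ barrier, which is exactly the phenomenon that the number of non-vanishing principal curvatures being $d-1 = 1$ is meant to capture.
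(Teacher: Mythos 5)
Your overall architecture --- restrict to a compact truncated piece of the cone (the paper does this via Lemma \ref{lem23}, obtaining a probability measure $\mu$ on $C_1'$ with the assumed decay), push $\mu$ forward under a one-parameter family of cone-preserving maps, average in the parameter, note that an $L^1$ average cannot improve sup-norm decay, and extract a genuine stationary-phase lower bound in the averaging parameter --- is exactly the skeleton of the paper's proof. But the two concrete choices on which the argument lives or dies are missing or wrong in your sketch. The correct one-parameter family is not the anisotropic scaling $\mathcal{P}_t:(z_1,z_2,z_3)\mapsto(tz_1,t^2z_2,z_3)$ (nor global dilations), but the shear given by translation along the parabola, $x\mapsto T_t(x)=x-t$, which maps $h(x,x^2,1)$ to $h(x-t,(x-t)^2,1)$ and is the true analogue of the rotations in Harris's light-cone argument. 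The whole mechanism rests on the algebraic identity $(x-t)^2=(x,x^2,1)\cdot\eta_1(t)$ with $\eta_1(t)=(-2t,1,t^2)$, so that the averaged measure $\nu$ satisfies $\widehat{\nu}(ke_2)=\int\widehat{\mu}(k\eta_1(t))\psi(t)\,dt$ with $|\eta_1(t)|\geq 1$; this is what converts the hypothesis $|\widehat{\mu}(\xi)|\lesssim|\xi|^{-s/2}$ into the upper bound $|\widehat{\nu}(ke_2)|\lesssim k^{-s/2}$. Correspondingly, the test direction is $e_2=(0,1,0)$, the normal to the cone along the ruling through $x=0$, not the axis $(0,0,1)$: the axis is not normal to the surface anywhere, and it is fixed by your scaling family, so averaging over $\mathcal{P}_t$ produces no oscillation at all at frequency $ke_3$ and hence no lower bound there.

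With your choice of family the two halves of the argument cannot be made to meet at any direction. At $\xi=ke_2$ the pushforward under $\mathcal{P}_t$ samples $\widehat{\mu}$ at $kt^2e_2$, whose magnitude degenerates exactly where the $t$-phase $kht^2x^2$ is stationary (near $t=0$), so the upper-bound step fails there; away from $t=0$ the phase is non-stationary in $t$ for $x\neq0$, and the size of the average is then governed by the $\mu$-mass near $x=0$, which may be zero, so no uniform $k^{-1/2}$ lower bound is available. In the paper's setup, by contrast, $F_x(t)=(x-t)^2$ has its unique critical point at $t=x$, interior to the set where $\psi\equiv1$, with constant Hessian $2$ and critical value $0$; hence every main term from Theorem \ref{t61}.b is $(-2ikh)^{-1/2}\psi(x)$ with a common phase factor and nonnegative amplitude, so integrating against the arbitrary measure $d\mu\circ\Phi_1$ produces no cancellation --- this is precisely how the ``honest lower bound'' you correctly flag as the main obstacle is obtained, and it also answers your worry about $\mu$ not being a density: stationary phase is applied only in the smooth auxiliary variable $t$, never in the measure variables. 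So the gap is concrete: without the shear family, the identity $(x-t)^2=(x,x^2,1)\cdot\eta_1(t)$, and the direction $e_2$, the averaging you propose cannot simultaneously yield the $k^{-s/2}$ upper bound and the $k^{-1/2}$ lower bound, and no conclusion about $s$ follows.
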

   
    \begin{proof}

  Let $\mu_0 \in \mathcal{M}(C_{1})$.  We need to show that if $\sup_{\xi \in \RR^{d+1}}|\xi|^{\frac{s}{2}}|\widehat{\mu_0}(\xi)| < \infty$, then $s \leq 1$. If $\text{spt } \mu_0 = \{ 0\}$, $\mu_0$ is the point mass at $0$, so $\widehat{\mu_0}(\xi)=1$, and $s=0$. Otherwise,  we assume $\mu_0(\{\Phi_{1}(x, h) | x \in \RR, h > 0\}) > 0$. There exist $R > 0$ and $0 < a < b$, such that for the set 
  \begin{equation}
  \label{eq250}
      C'_{1} : = \{\Phi_{1}(x, h) | x \in [-R, R], h \in [a, b] \},
  \end{equation}
  we have $\mu_0(C'_{1}) > 0$. Then we apply Lemma \ref{lem23} to obtain a measure $\mu \in \mathcal{M}(C'_{1})$ with $|\widehat{\mu}(\xi)| \leq c_{\mu} |\xi|^{-\frac{s}{2}}$ for a $c_{\mu} > 0$. We may further assume $\mu$ is a probability measure ($\norm{\mu}_{1} = 1$).
 
 For $t \in \RR$, we define $T_t: \RR \to \RR$, a shift on $\RR$, by
 \begin{equation}
 \label{eq163}
     T_t(x) : = x-t.
 \end{equation}
 
Each $T_t$ yields a new re-parametrization of $C_{1}$: 
\begin{equation*}
    \begin{split}
        C_{1} = & \{h(x-t, (x-t)^2, 1) | x \in \RR, h \in \RR \} \\
        = &  \{\Phi_{1}(T_t(x), h) | x \in \RR, h \in \RR \}.
    \end{split}
\end{equation*}

\begin{figure}[!h]
\centering
\beginpgfgraphicnamed{Illustration}
\begin{tikzpicture}
\draw[gray, thick] (0,1.5) -- (0,-2);
\draw[gray, thick] (2,1) -- (1, -2);
\filldraw[black] (1.9,1.05) circle (2pt) node[anchor=west]{$h(x, x^2, 1)$};
\draw (2,2) .. controls (-0.66, 1.5) .. (2,1);

\draw[gray, thick] (10,1.5) -- (10,-2);
\draw[gray, thick] (12,1) -- (11, -2);
\filldraw[black] (10, 1.5) circle (2pt) node[anchor=east]{$h(x-t, (x-t)^2, 1) $};
\draw (12,2) .. controls (9.34, 1.5) .. (12,1);
\draw[->]  (4, 0) -- (8, 0)
node [above,text width=3cm,text centered,midway]
{Re-parametrize by $T_t$
};

\draw[black, fill=gray, fill opacity=0.3] (2,1) -- (1.75, 0.45) -- (1.5,0.5) -- (1.62,1) -- cycle;

\draw[black, fill=black, fill opacity=0.6] (10.15, 0.9) -- (10, 1) -- (10, 1.5) -- (10.15, 1.35) -- cycle;
\end{tikzpicture}
\endpgfgraphicnamed
\caption{Illustration of the re-parametrization by $T_t$ on $C_{1}$}
\label{fig1}
\end{figure}
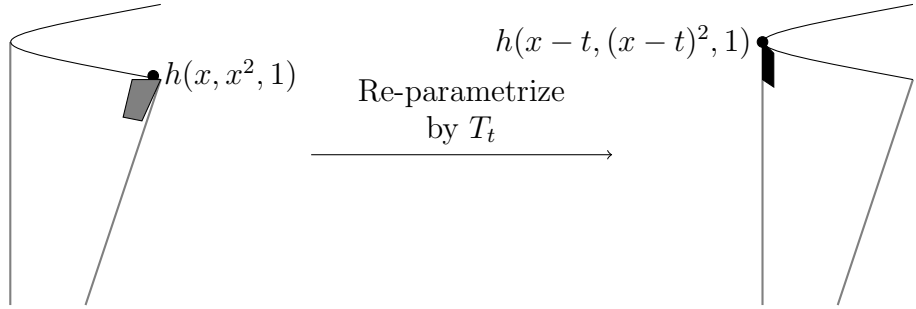

Given $\mu \in \mathcal{M}(C'_{1})$ and $t \in \RR$, we define $\nu_t \in \mathcal{M}(C_{1})$ using the re-parametrization as follows: for non-negative Borel functions $f$ on $\RR^3$,
\begin{equation*}
    \langle f, \nu_t \rangle : = \int f(\Phi_{1}(T_t(x), h)) d\mu \circ \Phi_{1}(x, h).
\end{equation*}

Geometrically, $\nu_t$ is a translation (by $t$) of $\mu$ along the parabola obtained from the horizontal cross-section of $C_{1}$. For example, suppose the gray and black patches on the cones in Figure \ref{fig1} are separated by the parameter $t$. If $\mu$ is supported in the gray patch of the cone $C_{1}$ on the left, $\nu_t$ is supported in the black patch of the cone on the right. We note that
\begin{equation}
\label{eq210}
    \begin{split}
        \widehat{\nu_t}(\xi) & = \int e^{-2 \pi i \Phi_{1}(T_t(x), h) \cdot \xi} d\mu \circ \Phi_{1}(x, h) \\
        & = \int e^{-2 \pi i h(x-t, (x-t)^2, 1) \cdot \xi} d\mu \circ \Phi_{1}(x, h) \\
        & = \int e^{-2 \pi i [h\xi_1(x-t)+h\xi_2(x-t)^2+h\xi_3)]} d\mu \circ \Phi_{1}(x, h).
    \end{split}
\end{equation}
The phase function is quadratic in $(x-t)$, and we expect the decay of $\widehat{\nu_t} (\xi)$ is slow along $\xi \| e_2$, where $e_2 \in \RR^3$, $(e_2)_j = \delta_{2, j}$. Now we create a new measure $\nu \in \mathcal{M}(C_{1})$ such that for non-negative Borel functions $f$ on $\RR^{3}$, 
\begin{equation*}
    \langle f, \nu \rangle : = \int_{\RR} \langle f, \nu_t \rangle \psi(t) dt,
\end{equation*}
where $\psi \in C_{0}^{\infty}(\RR)$ with $\psi \geq 0$ and $\psi(t) = 1$ for $|t| \leq R$. $\nu$ is a weighted average of the measures $\nu_t$. Then
\begin{equation}
\label{eq227}
    \widehat{\nu}(\xi) = \int_{\RR} \widehat{\nu_t}(\xi) \psi(t) dt.
\end{equation}

We have two facts about $\widehat{\nu}(\xi)$ when $\xi \| e_2$.

\begin{itemize}
    \item 
    Let $\eta_1: \RR \to \RR^3$, $\eta_1(t) : = (-2t, 1, t^2)$. Since we have a relation
\begin{equation}
\label{eq5}
    (x-t)^2 = (x, x^2, 1) \cdot \eta_1(t),
\end{equation}
and $h(x, x^2, 1) = \Phi_1(x, h)$ for $x, h \in \RR$, the above yields
\begin{equation}
\label{eq240}
    \begin{aligned}
        h(x-t)^2 = \Phi_1(x, h) \cdot \eta_1(t).
    \end{aligned}
\end{equation}
For $k>0$, 
\begin{equation}
\label{eq9}
    \begin{aligned}
\widehat{\nu_t}(ke_2) &  = \int e^{-2 \pi i hk(x-t)^2} d\mu \circ \Phi_{1}(x, h) & \text{from (\ref{eq210})}\\
 &  = \int e^{-2 \pi i k\Phi_1(x,h) \cdot \eta_1(t)} d\mu \circ \Phi_{1}(x, h) & \text{by (\ref{eq240})}\\
  &  = \int e^{-2 \pi i z \cdot k\eta_1(t)} d\mu(z)\\
& = \widehat{\mu}(k\eta_1(t)). 
\end{aligned}
\end{equation}
Since $|\widehat{\mu}(\xi)| \leq c_{\mu} |\xi|^{-\frac{s}{2}}$, by (\ref{eq227}),
\begin{equation}
\label{eq10}
    |\widehat{\nu}(ke_2)| = \left|\int_{\RR} \widehat{\mu}(k\eta_1(t)) \psi(t)dt\right| \leq c_{\mu} \norm{\psi}_{L^1} k^{-\frac{s}{2}}.
\end{equation}
\item On the other hand, by changing the order of integrations,
\begin{equation*}
\begin{aligned}
    \widehat{\nu}(ke_2) = & \int \int_{\RR} e^{-2 \pi i k h (x-t)^2}   \psi(t) dt d\mu \circ \Phi_{1}(x,h) & \text{from (\ref{eq210}), (\ref{eq227})}\\
    = &  \int \conj{I(2 \pi k h; F_x, \psi)} d\mu \circ \Phi_{1}(x,h) ,
\end{aligned}
\end{equation*}
where $I$ is defined in (\ref{eq724}) of Theorem \ref{t61}.b, and $F_x(t) = (x-t)^2$. By applying stationary phase (Theorem \ref{t61}.b) to $I$, if $kh>\lambda_0$ for $\lambda_0 \in \RR$ sufficiently large,
\begin{equation*}
    I(2 \pi k h; F_x, \psi) =  \left(-2ikh\right)^{-\frac{1}{2}}\psi(x) + E(x)(kh)^{-1},
\end{equation*}
where $E: [-R, R] \to \CC$ is a complex valued-function. In addition, $|E(x)| \leq E_0$ for all  $x\in[-R, R]$, where $E_0> 0$ depends on $\lambda_0$, $R$, and $\norm{\psi}_{C^4}$. Therefore, we conjugate and integrate in $d\mu \circ \Phi_{1}(x,h)$ to obtain
\begin{equation*}
\begin{split}
     & \int \conj{I(2 \pi k h; F_x, \psi)} d\mu \circ \Phi_{1}(x,h) \\
     = & (2i)^{-\frac{1}{2}} k^{-\frac{1}{2}}\int  h^{-\frac{1}{2}} \psi(x) d\mu \circ \Phi_{1}(x,h)+ k^{-1}\int \conj{E(x)}h^{-1} d\mu \circ \Phi_{1}(x,h ), 
\end{split}
\end{equation*}
and
\begin{equation*}
        \left| \widehat{\nu}(ke_2) - (2i)^{-\frac{1}{2}} k^{-\frac{1}{2}}\int  h^{-\frac{1}{2}} \psi(x) d\mu \circ \Phi_{1}(x,h) \right| \leq E_0 k^{-1}\int h^{-1} d\mu \circ \Phi_{1}(x,h ).
\end{equation*}
Since $0 < a \leq h \leq b$, $\psi(x)=1$ for $x \in [-R, R]$, and $\mu$ is a probability measure supported on $C'_1$ (defined in (\ref{eq250})), $$\int h^{-\frac{1}{2}} \psi(x) d\mu \circ \Phi_{1}(x,h)  \geq b^{-\frac{1}{2}},$$ and $$\int h^{-1} d\mu \circ \Phi_{1}(x,h) \leq a^{-1}.$$ So $|\widehat{\nu}(ke_2)| \geq \frac{1}{2}b^{-\frac{1}{2}}  k^{-\frac{1}{2}}$ for $k \geq k_0$, where $k_0>0$ depends on $E_0$, $a$, and $b$.
\end{itemize}
From these two facts, we have $s \leq 1$.
    \end{proof}
     
\subsection{A perturbed parabolic cone $C_{2}$}
\label{sec22}

The following proposition gives an upper bound on $\dim_F(C_{2})$, where the cone $C_2$ and the associated coordinate maps are defined in Definition \ref{d21}.

\begin{proposition}
\label{prop25}
    $\dim_F(C_{2}) \leq 1$.
\end{proposition}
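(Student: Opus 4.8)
The plan is to mimic the proof of Proposition \ref{prop24} almost verbatim, with the main modification being a more careful choice of the auxiliary curve of re-parametrizations and a correspondingly more delicate stationary phase analysis. As before, given $\mu_0 \in \mathcal{M}(C_2)$ with $\sup_{\xi}|\xi|^{s/2}|\widehat{\mu_0}(\xi)| < \infty$, we reduce (via Lemma \ref{lem23}) to a probability measure $\mu$ supported on a compact piece $C'_2 := \{\Phi_2(x,h) \mid x \in [-R,R], \ h \in [a,b]\}$ satisfying $|\widehat{\mu}(\xi)| \leq c_\mu|\xi|^{-s/2}$, where $[-R,R] \subset I_2$ and $0 < a < b$. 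Here we use that $I_2$ is a sufficiently small neighborhood of $0$, so $\gamma_2(x) = x + x^3$ is a diffeomorphism onto its image and $S_2$ genuinely has non-vanishing Gaussian curvature on the relevant piece. The same shift maps $T_t(x) := x - t$ (for $t$ in a small neighborhood of $0$, say $|t| \le R$) reparametrize the relevant piece of $C_2$, and we form $\nu_t$ by pushing $\mu$ forward along $\Phi_2(T_t(\cdot),\cdot)$, then average $\nu := \int \nu_t \, \psi(t)\, dt$ with $\psi \in C_0^\infty(\RR)$, $\psi \geq 0$, $\psi \equiv 1$ on $[-R,R]$ and $\operatorname{spt}\psi$ contained in the neighborhood where everything is valid.

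The key point that made $C_1$ work was the exact identity $(x-t)^2 = (x, x^2, 1)\cdot \eta_1(t)$, i.e.\ that the $t$-dependent phase, evaluated at frequency $k e_2$, is \emph{linear} in the surface point $z = \Phi_1(x,h)/h$. For $C_2$ this fails: the phase of $\widehat{\nu_t}(\xi)$ is $h[\xi_1 \gamma_2(x-t) + \xi_2(x-t)^2 + \xi_3]$, and the cubic term in $\gamma_2$ spoils exact linearity. The fix is to \emph{not} evaluate at the fixed direction $e_2$ but instead, for each large frequency, to choose the direction so that the bad (slowly decaying) contribution is isolated. Concretely, I expect one should pick a curve $t \mapsto \eta(t) = (\eta_1(t), \eta_2(t), \eta_3(t))$ (a perturbation of $(-2t, 1, t^2)$, adapted to $\gamma_2$) such that $\gamma_2(x-t) \cdot(\text{something}) + \dots$ recovers, up to controlled error, a quantity linear in $z$; alternatively — and this is probably cleaner — fix the frequency direction to be $e_2$ as before and simply absorb the extra cubic terms into the amplitude/phase of the oscillatory integral. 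That is, write the phase as $kh\,[(x-t)^2 + \xi_1 h^{-1} k^{-1}\cdot k (x-t) + \text{cubic}]$... more carefully: at frequency $ke_2$ the phase is exactly $kh(x-t)^2$, identical to the $C_1$ case! The cubic perturbation $\gamma_2$ only enters through the \emph{support} of $\mu$ and through the change of variables relating $d\mu\circ\Phi_2$ to the surface, not through the phase at direction $e_2$. So in fact the lower-bound fact ($|\widehat\nu(ke_2)| \geq \tfrac12 b^{-1/2}k^{-1/2}$ for large $k$) goes through by the \emph{same} stationary phase computation (Theorem \ref{t61}.b applied to $F_x(t) = (x-t)^2$), since $\psi(x) = 1$ on $[-R,R]$ and $h \in [a,b]$.

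For the upper-bound fact ($|\widehat{\nu}(ke_2)| \leq C k^{-s/2}$) we need the analogue of identity \eqref{eq9}, i.e.\ that $\widehat{\nu_t}(ke_2) = \widehat{\mu}(k\eta(t))$ for some curve $\eta$ valued in $\RR^3$ with $|\eta(t)|$ bounded below on the support of $\psi$. We have $\widehat{\nu_t}(ke_2) = \int e^{-2\pi i kh(x-t)^2} d\mu\circ\Phi_2(x,h)$. We want to realize $h(x-t)^2$ as $\Phi_2(x,h)\cdot \eta(t) = h(\gamma_2(x), x^2, 1)\cdot\eta(t) = h[\eta_1(t)\gamma_2(x) + \eta_2(t)x^2 + \eta_3(t)]$. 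Since $\gamma_2(x) = x + x^3$, there is no \emph{linear} $\eta(t)$ giving $(x-t)^2 = \eta_1(t)(x+x^3) + \eta_2(t)x^2 + \eta_3(t)$ — the $x^3$ term obstructs it. This is the genuine new obstacle, and I expect the resolution is to \emph{not} use $ke_2$ but rather to run the argument at a direction $\xi$ depending on $k$, chosen along the "slow decay" cone direction of $\nu_t$, OR to replace the single identity by a stationary phase estimate on the other side too: bound $\widehat{\nu_t}(ke_2) = \int e^{-2\pi i kh(x-t)^2}\,d\mu\circ\Phi_2(x,h)$ by decomposing $\mu$ and applying non-stationary/stationary phase in $x$ using that $\widehat\mu$ decays — but $\widehat\mu$ decay is in the ambient variable, not in $x$. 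The cleanest route, which I would pursue, is: choose $\eta(t)$ so that $\Phi_2(x,h)\cdot\eta(t)$ matches $h(x-t)^2$ \emph{to second order at the stationary point} and estimate the discrepancy; since on $\operatorname{spt}\psi$ (a small neighborhood of $0$) the cubic term $x^3$ is dominated, pick $\eta(t) = (a_1(t), a_2(t), a_3(t))$ solving the linear system from matching constant, linear, quadratic coefficients of $(x-t)^2$ against $\gamma_2(x)\eta_1 + x^2\eta_2 + \eta_3$ at $x = t$ (i.e.\ a Taylor matching), giving $\eta_1(t) = -2t/\gamma_2'(t) = -2t/(1+3t^2)$, etc.; then $\Phi_2(x,h)\cdot\eta(t) - h(x-t)^2 = h\cdot O(|x-t|^3)$, and the resulting extra factor $e^{-2\pi i k h O(|x-t|^3)}$ is handled by another application of stationary phase (the error term is lower order), yielding $|\widehat{\nu_t}(ke_2)| \lesssim |\widehat\mu(k\eta(t))| + (\text{lower order}) \lesssim c_\mu (k|\eta(t)|)^{-s/2} + k^{-\min(s,1)/2 - \text{gain}}$, and integrating in $t$ against $\psi$ (noting $|\eta(t)| \gtrsim 1$ on $\operatorname{spt}\psi$, possibly after shrinking $I_2$) gives $|\widehat\nu(ke_2)| \lesssim k^{-s/2}$ as long as $s \le 1$. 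The hard part will be making this Taylor-matching-plus-error-stationary-phase bookkeeping precise and uniform in $t$ and $x$; this is exactly the technical content I would defer to (or extract from) the technical lemma in section \ref{sec_ex} and Theorem \ref{t61}. Comparing the two facts as in the $C_1$ proof then forces $s \le 1$.
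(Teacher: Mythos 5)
Your reduction, averaging construction, and the lower-bound half are fine: with the plain shift $T_t(x)=x-t$, the phase of $\widehat{\nu_t}(ke_2)$ is indeed exactly $kh(x-t)^2$ (the cubic term of $\gamma_2$ sits in the first coordinate and is killed by the direction $e_2$), so the stationary-phase estimate $|\widehat{\nu}(ke_2)|\gtrsim k^{-1/2}$ goes through as for $C_1$. The genuine gap is in your upper-bound half. After Taylor matching you are left with
$\widehat{\nu_t}(ke_2)=\int e^{-2\pi i k\,z\cdot\eta(t)}\,e^{2\pi i k h R_t(x)}\,d\mu(z)$ with $hR_t(x)=h\,O(|x-t|^3)$, and you propose to dispose of the extra oscillatory factor ``by another application of stationary phase, the error term being lower order.'' But this integral is taken against the measure $\mu$, which is only known to have the linear-phase decay $|\widehat{\mu}(\xi)|\le c_\mu|\xi|^{-s/2}$; it has no smooth density, so there is no integration by parts or stationary phase available in the $x$ (or $z$) variable. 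The only way to use the hypothesis on $\mu$ is to have the phase be \emph{exactly} linear in $z$. If instead you try to expand $e^{2\pi i k h R_t(x)}$ into plane waves $e^{2\pi i z\cdot\zeta}$ and apply the decay of $\widehat{\mu}$ at the shifted frequencies $k\eta(t)-\zeta$, the $L^1$-mass of that decomposition grows polynomially in $k$ (the factor oscillates at frequency comparable to $k$ on the fixed support of $\mu$), so the ``error'' is not lower order; and you cannot shrink the support with $k$ because $\mu$ is fixed. So the Taylor-matching-plus-error scheme, as stated, does not close, and the technical lemma you hope to lean on does not perform this bookkeeping.

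The paper resolves the obstruction in the opposite direction: rather than keeping $T_t(x)=x-t$ and tolerating an approximate identity, it \emph{redefines} the reparametrization so that the identity is exact. Lemma \ref{lem26} produces $\eta_2(t)=\bigl(-\tfrac{2t}{1+3t^2},\,1,\,\tfrac{2t(t+t^3)}{1+3t^2}-t^2\bigr)$ (note your Taylor-matched first component agrees with this) together with
$T_t(x)=(x-t)\sqrt{\tfrac{1-3x^2+4x(x-t)-(x-t)^2}{1+3t^2}}$, so that $(T_t(x))^2=(x+x^3,x^2,1)\cdot\eta_2(t)$ holds identically on a small $I_2$. Then $\widehat{\nu_t}(ke_2)=\widehat{\mu}(k\eta_2(t))$ with no error term at all, giving the upper bound immediately; the price is paid on the other side, where the stationary-phase lower bound must be run with the perturbed phase $F_x(t)=T_t(x)^2$, and Lemma \ref{lem26} supplies exactly the nondegeneracy ($F_x(x)=0$, unique critical point at $t=x$, $\partial_t^2F_x$ pinched between $\epsilon$ and $\epsilon^{-1}$) needed to apply Theorem \ref{t61}.b uniformly. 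To repair your argument you should adopt this exact-identity construction rather than an approximate matching with an error term.
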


\begin{remark}
     If we still use $T_t(x) = x-t$ as in (\ref{eq163}) and follow the procedure for $C_{1}$, we can no longer obtain (\ref{eq9}) and (\ref{eq10}) that relate the Fourier transform of the measures $\mu$, $\nu_t$, and $\nu$ on $C_{2}$. The issue is that no analog of (\ref{eq5}) holds in this case. Specifically, we ask readers to check that for $t \neq 0$, there does not exist $v_t \in \RR^3$, such that for all $x$ in a neighborhood of $0$,
$$(x-t)^2 = (x+x^3, x^2, 1) \cdot v_t.$$
\end{remark}

\begin{proof}

A preliminary task is to find re-parametrizations indexed by $t \in I_{2}$ of the curve $S_{2}$ given by $x \to T_t(x)$ such that there exists $\eta_2: I_{2} \to \RR^3$,
\begin{equation}
\label{re5}
    (T_t(x))^2 = (x+x^3, x^2, 1) \cdot \eta_2(t).
\end{equation}
Since for $h \in \RR$, $x \in I_2$, $\Phi_2(x, h) = h(x+x^3, x^2, 1)$, the above yields the following relation:
\begin{equation}
\label{re293}
    h (T_t(x))^2 = \Phi_2(x, h) \cdot \eta_2(t).
\end{equation}

Assuming for the moment that such $\{T_t\}$ exists, the remainder of the proof is completed as follows. We need to show that if $\mu \in \mathcal{M}(C_{2})$ and $|\widehat{\mu}(\xi)| \leq c_{\mu} |\xi|^{-\frac{s}{2}}$ for a $c_{\mu}>0$, then $s \leq 1$. We can reduce to the case where $\mu$ is supported in 
\begin{equation}
\label{eq409}
    C'_{2} : = \{\Phi_{2}(x, h) | x \in 2^{-1}I_{2}, h \in [a, b] \},
\end{equation}
for $0 < a < b$, and $\mu$ is a probability measure as in the proof of Proposition \ref{prop24}. 

Each $T_t$ yields a new re-parametrization of $C_{2}$: 
\begin{equation*}
    \begin{split}
        C_{2} \supset & \{h(\gamma_2(T_t(x)), (T_t(x))^2, 1) | x \in 2^{-1}I_2, h \in \RR \} \\
        = &  \{\Phi_{2}(T_t(x), h) | x \in 2^{-1}I_2, h \in \RR \}.
    \end{split}
\end{equation*}

Given $\mu \in \mathcal{M}(C'_{2})$ and $t \in I_{2}$, we use this re-parametrization to define $\nu_t \in \mathcal{M}(C_{2})$ as follows: for non-negative Borel functions $f$ on $\RR^3$,
\begin{equation}
\label{eq320}
    \langle f, \nu_t \rangle : = \int f(\Phi_{2}(T_t(x), h)) d\mu \circ \Phi_{2}(x, h).
\end{equation}

\begin{figure}[!h]
\centering
\beginpgfgraphicnamed{Illustration}
\begin{tikzpicture}
\draw[gray, thick] (0,1.5) -- (0,-2);
\draw[gray, thick] (2,1) -- (1, -2);
\filldraw[black] (1.9,1.05) circle (2pt) node[anchor=south west]{$h(x+x^3, x^2, 1)$};
\draw (2,2) .. controls (-0.66, 1.5) .. (2,1);

\draw[gray, thick] (10,1.5) -- (10,-2);
\draw[gray, thick] (12,1) -- (11, -2);
\filldraw[black] (10, 1.5) circle (2pt) node[anchor=east]{$h(x_t+x_t^3, x_t^2, 1) $};
\draw (12,2) .. controls (9.34, 1.5) .. (12,1);
\draw[->]  (4, 0) -- (8, 0)
node [above,text width=3cm,text centered,midway]
{Re-parametrize by $T_t$
};

\draw[black, fill=gray, fill opacity=0.3] (2,1) -- (1.75, 0.45) -- (1.5,0.5) -- (1.62,1) -- cycle;

\draw[black, fill=black, fill opacity=0.6] (10.15, 0.9) -- (10, 1) -- (10, 1.5) -- (10.15, 1.35) -- cycle;
\end{tikzpicture}
\endpgfgraphicnamed
\caption{Illustration of the shift $\Theta_t$ on $C_{2}$, where $x_t=T_t(x)$.}
\label{fig2}
\end{figure}
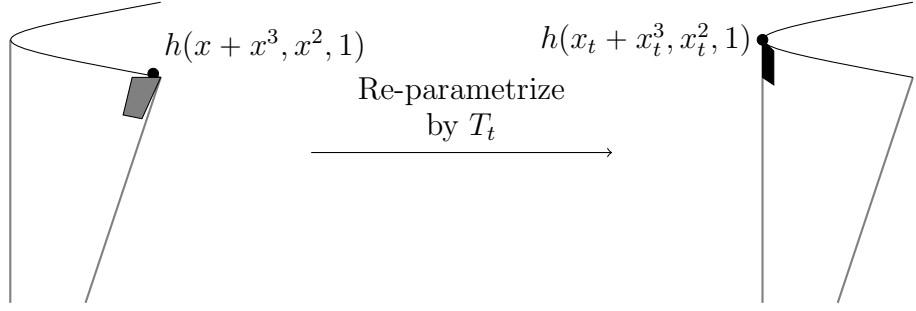

Geometrically, $\nu_t$ is a perturbed translation (by $t$) of $\mu$ along the curve $y=x+x^3$ obtained from the horizontal cross-section of $C_{2}$.

Now we create a new measure $\nu \in \mathcal{M}(C_{2})$ such that for non-negative Borel functions $f$ on $\RR^{3}$, 
\begin{equation*}
    \langle f, \nu \rangle : = \int_{\RR} \langle f, \nu_t\rangle \psi(t) dt,
\end{equation*}
where $\psi \in C_{0}^{\infty}(\RR)$, $\psi \geq 0$, $\text{spt }\psi \subset I_{2}$, and  $\psi(t) = 1$ for $t \in 2^{-1}I_{2}$. $\nu$ is a weighted average of the measures $\nu_t$. Then
\begin{equation}
\label{eq369}
    \widehat{\nu}(\xi) = \int_{\RR} \widehat{\nu_t}(\xi) \psi(t) dt.
\end{equation}

We have two facts about $\widehat{\nu}(\xi)$ when $\xi \| e_2$.

\begin{itemize}
    \item 
    For $k>0$, 
\begin{equation*}
\begin{aligned}
\widehat{\nu_t}(ke_2) &  = \int e^{-2 \pi i \Phi_2(T_t(x), h) \cdot ke_2} d\mu \circ \Phi_2(x, h) & \text{from (\ref{eq320})}\\
&  = \int e^{-2 \pi i k h T_t(x)^2 } d\mu \circ \Phi_2(x, h) & \text{by (\ref{eq117})}\\
 &  = \int e^{-2 \pi i \Phi_2(x, h) \cdot k\eta_2(t)} d\mu \circ \Phi_2(x, h) & \text{by (\ref{re293})}\\
 &  = \int e^{-2 \pi i z \cdot k\eta_2(t)} d\mu(z)\\
& = \widehat{\mu}(k\eta_2(t)). 
\end{aligned}
\end{equation*} If $|\eta_2(t)| \geq 1$ and $|\widehat{\mu}(\xi)| \leq c_{\mu} |\xi|^{-\frac{s}{2}}$, by (\ref{eq369}),
\begin{equation*}
    |\widehat{\nu}(ke_2)| = \left|\int_{\RR} \widehat{\mu}(k\eta_2(t)) \psi(t)dt\right| \leq c_{\mu} \norm{\psi}_{L^1} k^{-\frac{s}{2}}.
\end{equation*}
\item On the other hand, by changing the order of integrations,
\begin{equation*}
\begin{aligned}
    \widehat{\nu}(ke_2) = & \int \int_{\RR} e^{-2 \pi i k h T_t(x)^2}   \psi(t) dt d\mu \circ \Phi_2(x,h) & \text{from (\ref{eq320}), (\ref{eq369})} \\
    = & \int \conj{I(2\pi k h; F_x,\psi)}d\mu \circ \Phi_2(x,h),
\end{aligned}
\end{equation*}
where $F_x(t) = T_t(x)^2$, and $I$ is defined in (\ref{eq724}) of Theorem \ref{t61}. Suppose $F_x$ satisfies the conditions stated in Lemma \ref{lem26}. We can estimate the integrand $\conj{I(2\pi k h; F_x,\psi)}$ using stationary phase (Theorem \ref{t61}.b) to obtain
\begin{equation*}
    I(2\pi k h; F_x,\psi) =  \left[-ikh\frac{\partial^2 F_x}{\partial t^2}(x)\right]^{-\frac{1}{2}}\psi(x)+ E(x)(kh)^{-1},
\end{equation*}
where  $kh>\lambda_0$ for $\lambda_0 \in \RR$ sufficiently large and $E: I_2 \to \CC$ is a complex-valued function. In addition, $|E(x)| \leq E_0$, where $E_0>0$ depends on $\lambda_0$, size of $I_2$, $\sup \{\norm{F_x}_{C_7}| x \in 2^{-1} I_2 \}$, and $\norm{\psi}_{C_4}$. Therefore, we conjugate and integrate in $d\mu \circ \Phi_2(x,h)$ to obtain
\begin{equation*}
\begin{aligned}
    & \int \conj{I(\pi k h; F_x,\psi)}d\mu \circ \Phi_2(x,h) \\
    = & k^{-\frac{1}{2}} \int  \left[ih\frac{\partial^2 F_x}{\partial t^2}(x)\right]^{-\frac{1}{2}} \psi(x) d\mu \circ \Phi_2(x,h) + k^{-1}\int \conj{E(x)}h^{-1} d\mu \circ \Phi_2(x,h),
\end{aligned}
\end{equation*} and
\begin{equation*}
\left| \widehat{\nu}(ke_2) - k^{-\frac{1}{2}} \int  \left[ih\frac{\partial^2 F_x}{\partial t^2}(x)\right]^{-\frac{1}{2}} \psi(x) d\mu \circ \Phi_2(x,h) \right| \leq E_0 k^{-1}\int  h^{-1} d\mu \circ \Phi_2(x,h).
\end{equation*} 
Suppose $\epsilon \leq \frac{\partial^2 F_x}{\partial t^2}(x) \leq \epsilon^{-1}$ for an $\epsilon > 0$. Since $0 < a \leq h \leq b$, $\psi(x) = 1$ for $x \in 2^{-1}I_2$, $\mu$ is a probability measure supported on $C'_2$ (defined in (\ref{eq409})),  
\begin{equation*}
     \int  \left[h\frac{\partial^2 F_x}{\partial t^2}(x)\right]^{-\frac{1}{2}} \psi(x)d\mu \circ \Phi_2(x,h) \geq \epsilon^{-\frac{1}{2}} b^{-\frac{1}{2}},
\end{equation*}
and
\begin{equation*}
     \int h^{-1} d\mu \circ \Phi_2(x,h) \leq a^{-1}.
\end{equation*}
So $|\widehat{\nu}(ke_2)| \geq \frac{1}{2} (\epsilon b)^{-\frac{1}{2}}  k^{-\frac{1}{2}}$ for $k \geq k_0$, where $k_0>0$ depends on $\epsilon, E_0, a$ and $b$.

\end{itemize}
From these two facts, we have $s \leq 1$. It remains to show that such $\{T_t\}$ exists in the following lemma.
\end{proof}

\begin{lemma}
\label{lem26}
\begin{enumerate}
    \item Let $I_{2}$ be a neighborhood of $0$ that is sufficiently small. Then for each $t \in I_{2}$, there exist $T_t: I_{2} \to \RR$, $\eta_2: I_{2} \to \RR^3$, $|\eta_2(t)| \geq 1$ that satisfy \begin{equation}
    (T_t(x))^2 = (x+x^3, x^2, 1) \cdot \eta_2(t). \tag{\ref{re5} revisited}
\end{equation}
    \item For each $x$, let $F_x: I_{2} \to \RR$, $F_x(t) : = T_t(x)^2$. $F_x$ satisfies the following three properties:
\begin{enumerate}
    \item $F_x(x)=0$.
    \item For each $x$, $\frac{\partial F_x}{\partial t}(t)= 0$ and $t \in I_{2} $ if and only if $t=x$.				
    \item $\frac{\partial^2 F_x}{\partial t^2}(0) = 2$, and there exists $ > 0$, such that for each $x \in I_{2}$, $\epsilon \leq \frac{\partial^2 F_x}{\partial t^2}(t) \leq \epsilon^{-1}$.
\end{enumerate}
\end{enumerate}

\end{lemma}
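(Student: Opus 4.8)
\textbf{Proof plan for Lemma \ref{lem26}.}

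The strategy is to construct $T_t$ explicitly by making the ansatz that $T_t(x)^2$ is an affine combination of the coordinate functions $x+x^3$, $x^2$, $1$, i.e.\ we look for coefficients $\eta_2(t) = (\alpha(t), \beta(t), \delta(t))$ depending smoothly on $t$ so that
\begin{equation*}
    T_t(x)^2 = \alpha(t)(x+x^3) + \beta(t)x^2 + \delta(t)
\end{equation*}
holds as polynomial functions of $x$ on $I_2$. First I would impose the natural normalization that $T_0 = \mathrm{id}$, so $\eta_2(0) = (0,1,0)$ and hence $|\eta_2(0)| = 1$; continuity of $\eta_2$ will then give $|\eta_2(t)| \geq 1$ on a possibly smaller neighborhood of $0$ provided we can also guarantee, say, that the second coordinate $\beta(t)$ stays $\geq 1$, or more simply we just renormalize. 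Since the right-hand side must be a perfect square of a smooth function vanishing to first order at $t$ (this is forced by properties (b),(a)), I expect to define $T_t(x)$ as the square root of the right-hand side near its zero; the real content is choosing $\alpha(t), \beta(t), \delta(t)$ so that the cubic-in-$x$ polynomial $\alpha(t)(x+x^3)+\beta(t)x^2+\delta(t)$ is a nonnegative perfect square. A cubic cannot be a perfect square of a polynomial unless $\alpha(t) = 0$, so instead the right approach is to only require the identity to hold to the order needed for stationary phase, or — cleaner — to allow $T_t$ itself to be a genuinely nonpolynomial smooth function and use the implicit function theorem.

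Concretely, the plan is: define $G(x,t) := $ the polynomial on the right, view the equation $T^2 = G(x,t)$ and instead solve for $T_t$ via a different route. Set $w = T_t(x)$; we want $w = 0 \iff x = t$ and $w^2 = \alpha(t)(x+x^3)+\beta(t)x^2+\delta(t)$. Plugging $x = t$ forces $\delta(t) = -\alpha(t)(t+t^3) - \beta(t)t^2$, which dispatches property (a). For (b), we compute $\partial_x$ of $G(x,t)$ in $x$: $\partial_x G = \alpha(t)(1+3x^2) + 2\beta(t)x$, and we need this to vanish at $x=t$ as well (since $w^2$ and $w$ both vanish there, $\partial_x(w^2) = 2w\,\partial_x w = 0$ automatically at $x=t$ — so actually (b)'s ``only if'' direction requires that $G(x,t) > 0$ for $x \neq t$ in $I_2$, i.e.\ $G(\cdot,t)$ has a strict minimum of $0$ at $x=t$). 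This gives one more equation $\alpha(t)(1+3t^2) + 2\beta(t)t = 0$, i.e.\ $\alpha(t) = -\frac{2\beta(t)t}{1+3t^2}$. Now $\beta(t)$ is a free smooth positive parameter — take $\beta \equiv 1$ for simplicity — and $\alpha, \delta$ are determined smooth functions of $t$ with $\alpha(0) = \delta(0) = 0$. One then checks $G(x,t) = (x-t)^2 H(x,t)$ for a smooth $H$ with $H(t,t) = \partial_x^2 G(t,t)/2 > 0$ near $0$ (this is a Taylor/Hadamard factorization using $G(t,t)=\partial_xG(t,t)=0$), so $T_t(x) := (x-t)\sqrt{H(x,t)}$ is smooth, vanishes exactly at $x=t$ on a small enough $I_2$, and satisfies (a), (b).

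For part (2)(c), since $F_x(t) = T_t(x)^2 = G(x,t)$ as a function of $t$, I would compute $\partial_t^2 F_x(t)\big|_{t}$ and at $t=0$, $x=0$ directly from the explicit formula for $G$: with $\beta\equiv 1$, $G(x,t) = \alpha(t)(x+x^3) + x^2 + \delta(t)$, so $\partial_t^2 G = \alpha''(t)(x+x^3) + \delta''(t)$, and evaluating the determined expressions $\alpha(t) = -2t/(1+3t^2)$, $\delta(t) = 2t^2/(1+3t^2) - t^2 = (−t^2+ ...)$ (a routine computation) at $t=x=0$ yields $\partial_t^2 F_0(0) = \delta''(0)$; choosing the normalization correctly this equals $2$ — if the explicit constant comes out differently I would absorb it by rescaling $\beta$. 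The uniform bound $\epsilon \le \partial_t^2 F_x(t) \le \epsilon^{-1}$ on $I_2$ then follows from continuity of $\partial_t^2 F_x(t)$ in $(x,t)$ and positivity at $(0,0)$, after shrinking $I_2$. The main obstacle is purely bookkeeping: making sure the single normalization choice ($\beta \equiv 1$, or whatever constant is needed) is simultaneously compatible with $|\eta_2(t)| \ge 1$, with $G(\cdot,t)$ having a strict interior minimum at $x=t$ (so property (b) holds as an ``iff,'' not just ``if''), and with $\partial_t^2 F_x(0) = 2$; each of these is an open condition satisfied at $t=0$, so shrinking $I_2$ enough handles all of them at once, but one must check there is no hidden conflict — in particular that $G(\cdot,t)$ doesn't acquire a second zero inside $I_2$ from the cubic term, which is why $I_2$ is taken sufficiently small.
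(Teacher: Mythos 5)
Your construction is essentially the paper's proof: imposing $F(x,x)=0$ together with the vanishing of the first derivative along the diagonal yields exactly the paper's choice $\eta_2(t) = \left( -\tfrac{2t}{1+3t^2},\, 1,\, \tfrac{2t(t+t^3)}{1+3t^2}-t^2 \right)$ (your $\beta\equiv 1$ normalization), and factoring $G(x,t)=(x-t)^2 H(x,t)$ with $H>0$ near the diagonal gives the same $T_t(x)=(x-t)\sqrt{H(x,t)}$, with (a)--(c) verified by continuity after shrinking $I_2$, as in the paper. One small repair: the ``only if'' in (b) does not follow from $G(\cdot,t)>0$ off the diagonal (positivity rules out other zeros of $F_x$, not other critical points in $t$), but it does follow from the uniform bound $\partial_t^2 F_x \ge \epsilon$ you establish for (c), since that makes $\partial_t F_x$ strictly monotone in $t$ on a sufficiently small $I_2$.
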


\begin{proof}
We first solve for $\eta_2$. Suppose $F: I_{2} \times I_{2} \to \RR$ is $$F(x, t) = F_x(t) = (T_t(x))^2 = (x+x^3, x^2, 1) \cdot \eta_2(t).$$ Then given $F(x, x)=0$ for $x \in I_{2}$, $\frac{\partial F}{\partial t}(x, x)= 0$ is equivalent to $\frac{\partial F}{\partial x}(x, x)= 0$.  One solution is 
\begin{equation*}
    \eta_2(t) : = \left( -\frac{2t}{1+3t^2}, 1, \frac{2t(t+t^3)}{1+3t^2}-t^2 \right).
\end{equation*}
This choice of $\eta_2$ satisfies the requirement that $|\eta_2(t)| \geq 1$. We also note that the function $F_x$ satisfies all three properties in the list by the continuity of $F$ and the implicit function theorem.

We can return to define the transformation $T_t$. Since 
\begin{equation*}
\begin{aligned}
    (x+x^3, x^2, 1) \cdot \eta_2(t) & = \frac{-2t(x+x^3)+x^2(1+3t^2)+t^2-t^4}{1+3t^2} \\
    & = (x-t)^2\frac{1-3x^2+4x(x-t)-(x-t)^2}{1+3t^2},
\end{aligned}
\end{equation*}
we can define
$$T_t(x) : = (x-t) \sqrt{\frac{1-3x^2+4x(x-t)-(x-t)^2}{1+3t^2}}.$$
\end{proof}

\begin{remark}
    We note that $T_t(t)=0$, $T_0(x)=x$ for $x \in I_2$, and $$\sqrt{\frac{1-3x^2+4x(x-t)-(x-t)^2}{1+3t^2}} \approx 1$$ if $x, t$ are close to $0$. So the transformations $\{T_t\}$ in this case are perturbed variants of the ones used in $C_1$.
\end{remark}

\subsection{Parabolic cylinder $D_{1}$}

\label{ex23}

We will show an upper bound on $\dim_F(D_{1})$, where the cylinder $D_{1}$ and the associated coordinate maps are defined in Definition \ref{d21}.

\begin{proposition}
\label{prop632}
    $\dim_F(D_{1}) \leq 1$. 
\end{proposition}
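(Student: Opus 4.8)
The plan is to mimic the structure of the proofs of Propositions~\ref{prop24} and~\ref{prop25}, but adapted to the cylinder $D_1$, where the role of scaling by $h$ along the cone is replaced by a translation in the $h$-variable. Starting from an arbitrary $\mu_0 \in \mathcal{M}(D_1)$ with $\sup_{\xi}|\xi|^{s/2}|\widehat{\mu_0}(\xi)| < \infty$, I would first reduce to a compact piece: since $D_1 = \{(x,x^2,h) : x \in \RR, h \in \RR\}$ is $\sigma$-compact and $\mu_0$ is a nonzero finite measure, there exist $R>0$ and a bounded $h$-interval $[a,b]$ so that $\mu_0(D_1') > 0$ where $D_1' := \{\widetilde{\Phi}_1(x,h) : x \in [-R,R], h \in [a,b]\}$. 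Applying the technical lemma (Lemma~\ref{lem23}) to this piece, I obtain a probability measure $\mu \in \mathcal{M}(D_1')$ with $|\widehat{\mu}(\xi)| \leq c_\mu |\xi|^{-s/2}$.

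Next I would introduce the re-parametrizations. For $t \in \RR$ let $T_t(x) := x - t$ as in~\eqref{eq163}, and observe that $D_1 = \{\widetilde{\Phi}_1(T_t(x), h) : x \in \RR, h \in \RR\}$ since translating the parabola in its parametrization does not change the cylinder. Define $\nu_t \in \mathcal{M}(D_1)$ by $\langle f, \nu_t\rangle := \int f(\widetilde{\Phi}_1(T_t(x), h)) \, d\mu\circ\widetilde{\Phi}_1(x,h)$, and then the averaged measure $\nu := \int_\RR \nu_t \, \psi(t)\, dt$ with $\psi \in C_0^\infty(\RR)$, $\psi \geq 0$, $\psi \equiv 1$ on $[-R,R]$, so that $\widehat{\nu}(\xi) = \int_\RR \widehat{\nu_t}(\xi)\psi(t)\,dt$. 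The crucial observation is that the $h$-variable in $\widetilde{\Phi}_1$ is now flat (no curvature in that direction), so the slow-decay direction should again be $\xi \parallel e_2$, and the phase of $\widehat{\nu_t}(ke_2)$ is $-2\pi i k (x-t)^2$, \emph{independent of $h$} — this is the main structural difference from the cone case, where the phase carried a factor $h$. Thus the relation~\eqref{eq5}, $(x-t)^2 = (x, x^2, 1)\cdot\eta_1(t)$ with $\eta_1(t) = (-2t, 1, t^2)$, lifts directly to $(x-t)^2 = \widetilde{\Phi}_1(x,h)\cdot(-2t, 1, 0) + t^2$, giving $\widehat{\nu_t}(ke_2) = e^{-2\pi i k t^2}\widehat{\mu}(k(-2t,1,0))$, hence $|\widehat{\nu}(ke_2)| \leq c_\mu \|\psi\|_{L^1} k^{-s/2}$ after noting $|(-2t,1,0)| \geq 1$.

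For the matching lower bound on $|\widehat{\nu}(ke_2)|$, I would change the order of integration to write $\widehat{\nu}(ke_2) = \int \overline{I(2\pi k; F_x, \psi)}\, d\mu\circ\widetilde{\Phi}_1(x,h)$ with $F_x(t) = (x-t)^2$ — note the large parameter is now just $2\pi k$, with no $h$ factor, which actually simplifies matters. Stationary phase (Theorem~\ref{t61}.b) gives $I(2\pi k; F_x, \psi) = (-2ik)^{-1/2}\psi(x) + E(x)k^{-1}$ with $|E(x)| \leq E_0$ uniformly for $x \in [-R,R]$; integrating and using $\psi(x) = 1$ on $[-R,R]$ and that $\mu$ is a probability measure on $D_1'$ yields $|\widehat{\nu}(ke_2)| \geq \tfrac12 (2)^{-1/2}k^{-1/2}$ for $k \geq k_0$. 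Comparing the two estimates forces $k^{-s/2} \gtrsim k^{-1/2}$ for all large $k$, hence $s \leq 1$, and since $\mu_0$ was arbitrary, $\dim_F(D_1) \leq 1$.

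I expect the main obstacle to be bookkeeping rather than conceptual: verifying that Lemma~\ref{lem23} applies to the cylinder piece $D_1'$ exactly as it does to the cone pieces (the lemma is stated in Section~\ref{sec_ex}, which is not in this excerpt, so I am assuming it is formulated generally enough to cover graphs of this type), and checking the edge case where $\mathrm{spt}\,\mu_0$ degenerates — here, unlike the cone, there is no single point like the origin, but if $\mu_0$ is supported on a single vertical line $\{x_0\}\times\{x_0^2\}\times\RR$ the argument still goes through after translating so that line lies in $[-R,R]\times[a,b]$, or one observes such a set is a line and has Fourier dimension $0$ directly. The stationary phase input is essentially identical to the $C_1$ case since $F_x(t) = (x-t)^2$ is literally the same phase, so no new analysis of $I$ is needed.
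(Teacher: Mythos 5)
Your proposal is correct and follows essentially the same route as the paper: reduce via Lemma~\ref{lem23} to a probability measure on a piece $D_1'$, average the translated measures $\nu_t$ against $\psi$, use the exact relation $(x-t)^2 = \widetilde{\Phi}_1(x,h)\cdot(-2t,1,0)+t^2$ for the upper bound $c_\mu\|\psi\|_{L^1}k^{-s/2}$, and apply stationary phase with large parameter $2\pi k$ (no $h$-factor) for the lower bound $\gtrsim k^{-1/2}$, forcing $s\leq 1$. The only cosmetic difference is that you also confine $h$ to a compact interval $[a,b]$, which is harmless but unnecessary here (the paper's $D_1'$ leaves $h\in\RR$ precisely because the phase is $h$-independent), and your edge-case worry about measures on a vertical line is already covered by the main argument.
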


\begin{remark}
    The idea of creating an average measure from an old one still solves this case, even though the computations are slightly different compared to Proposition \ref{prop24} for the cone $C_1$.
\end{remark}

\begin{proof}

  Let $\mu \in \mathcal{M}(D_{1})$.  We need to show that if $\mu \in \mathcal{M}(D_{1})$ and $|\widehat{\mu}(\xi)| \leq c_{\mu} |\xi|^{-\frac{s}{2}}$ for a $c_{\mu}>0$, then $s \leq 1$. We can reduce to the case where $\mu$ is supported in 
\begin{equation}
\label{eq640}
    D'_{1} : = \{\widetilde{\Phi}_{1}(x, h) | x \in [-R, R], h \in \RR \}
\end{equation}
and $\mu$ is a probability measure as in the proof of Proposition \ref{prop24}. 
 
 For $t \in \RR$, we define $T_t: \RR \to \RR$, a shift on $\RR$, by
 \begin{equation}
 \label{eq647}
     T_t(x) : = x-t.
 \end{equation}
 
Each $T_t$ yields a new re-parametrization of $D_{1}$: 
\begin{equation*}
    \begin{split}
        D_{1} = & \{(x-t, (x-t)^2, h) | x \in \RR, h \in \RR \} \\
        = &  \{\widetilde{\Phi}_{1}(T_t(x), h) | x \in \RR, h \in \RR \}.
    \end{split}
\end{equation*}

\begin{figure}[!h]
\centering
\beginpgfgraphicnamed{Illustration}
\begin{tikzpicture}
\draw[gray, thick] (0,1.5) -- (0,-2);
\draw[gray, thick] (2,1) -- (2, -2);
\filldraw[black] (1.9,1.05) circle (2pt) node[anchor=west]{$(x, x^2, h)$};
\draw (2,2) .. controls (-0.66, 1.5) .. (2,1);

\draw[gray, thick] (10,1.5) -- (10,-2);
\draw[gray, thick] (12,1) -- (12, -2);
\filldraw[black] (10, 1.5) circle (2pt) node[anchor=east]{$(x-t, (x-t)^2, h) $};
\draw (12,2) .. controls (9.34, 1.5) .. (12,1);
\draw[->]  (4, 0) -- (8, 0)
node [above,text width=3cm,text centered,midway]
{Re-parametrize by $T_t$
};

\draw[black, fill=gray, fill opacity=0.3] (2,1) -- (2, 0.45) -- (1.55,0.5) -- (1.55,1) -- cycle;

\draw[black, fill=black, fill opacity=0.6] (10.15, 0.9) -- (10, 1) -- (10, 1.5) -- (10.15, 1.35) -- cycle;
\end{tikzpicture}
\endpgfgraphicnamed
\caption{Illustration of the re-parametrization by $T_t$ on $D_{1}$}
\label{fig3}
\end{figure}

Given $\mu \in \mathcal{M}(D_1)$ and $t \in \RR$, we define $\nu_t \in \mathcal{M}(D_{1})$ using the re-parametrization as follows: for non-negative Borel functions $f$ on $\RR^3$,
\begin{equation*}
    \langle f, \nu_t \rangle : = \int f(\widetilde{\Phi}_{1}(T_t(x), h)) d\mu \circ \widetilde{\Phi}_{1}(x, h).
\end{equation*}

Geometrically, $\nu_t$ is a translation (by $t$) of $\mu$ along the parabola obtained from the horizontal cross-section of $D_{1}$. For example, suppose the gray and black patches on the cones in Figure \ref{fig3} are separated by the parameter $t$. If $\mu$ is supported in the gray patch of the cone $D_{1}$ on the left, $\nu_t$ is supported in the black patch of the cone on the right. We note that
\begin{equation}
\label{eq693}
    \begin{split}
        \widehat{\nu_t}(\xi) & = \int e^{-2 \pi i \widetilde{\Phi}_{1}(T_t(x), h) \cdot \xi} d\mu \circ \widetilde{\Phi}_{1}(x, h) \\
        & = \int e^{-2 \pi i (x-t, (x-t)^2, h) \cdot \xi} d\mu \circ \widetilde{\Phi}_{1}(x, h) \\
        & = \int e^{-2 \pi i [\xi_1(x-t)+\xi_2(x-t)^2+h\xi_3)]} d\mu \circ \widetilde{\Phi}_{1}(x, h).
    \end{split}
\end{equation}
The phase function is quadratic in $(x-t)$, and we expect the decay of $\widehat{\nu_t} (\xi)$ is slow along $\xi \| e_2$, where $e_2 \in \RR^3$, $(e_2)_j = \delta_{2, j}$. Now we create a new measure $\nu \in \mathcal{M}(D_{1})$ such that for non-negative Borel functions $f$ on $\RR^{3}$, 
\begin{equation*}
    \langle f, \nu \rangle : = \int_{\RR} \langle f, \nu_t \rangle \psi(t) dt,
\end{equation*}
where $\psi \in C_{0}^{\infty}(\RR)$ with $\psi \geq 0$ and $\psi(t) = 1$ for $|t| \leq R$. $\nu$ is a weighted average of the measures $\nu_t$. Then
\begin{equation}
\label{eq706}
    \widehat{\nu}(\xi) = \int_{\RR} \widehat{\nu_t}(\xi) \psi(t) dt.
\end{equation}

We have two facts about $\widehat{\nu}(\xi)$ when $\xi \| e_2$.

\begin{itemize}
    \item 
    Let $\widetilde{\eta_1}: \RR \to \RR^3$, $\widetilde{\eta_1}(t) = (-2t, 1, 0)$. We have a relation
\begin{equation}
\label{eq716}
    (x-t)^2 = (x, x^2, h) \cdot \widetilde{\eta_1}(t)+t^2 = \widetilde{\Phi_1}(x, h) \cdot \widetilde{\eta_1}(t)+t^2  .
\end{equation}
For $k>0$, 
\begin{equation*}
    \begin{aligned}
\widehat{\nu_t}(ke_2) &  = \int e^{-2 \pi i k(x-t)^2} d\mu \circ \widetilde{\Phi_{1}}(x, h) & \text{from (\ref{eq706})}\\
 &  = e^{-2\pi i k t^2}\int e^{-2 \pi i k \widetilde{\Phi_1}(x,h) \cdot \widetilde{\eta_1}(t)} d\mu \circ \widetilde{\Phi_{1}}(x, h) & \text{by (\ref{eq716})}\\
  &  = e^{-2\pi i k t^2} \int e^{-2 \pi i z \cdot k\widetilde{\eta_1}(t)} d\mu(z)\\
& = e^{-2\pi i k t^2}\widehat{\mu}(k\widetilde{\eta_1}(t)). 
\end{aligned}
\end{equation*}
Since $|\widehat{\mu}(\xi)| \leq c_{\mu} |\xi|^{-\frac{s}{2}}$, by (\ref{eq706}),
\begin{equation*}
    |\widehat{\nu}(ke_2)| = \left|\int_{\RR} e^{-2\pi i k t^2} \widehat{\mu}(k\widetilde{\eta_1}(t)) \psi(t)dt\right| \leq c_{\mu} \norm{\psi}_{L^1} k^{-\frac{s}{2}}.
\end{equation*}
\item On the other hand, by changing the order of integrations,
\begin{equation*}
\begin{aligned}
    \widehat{\nu}(ke_2) = & \int \int_{\RR} e^{-2 \pi i k (x-t)^2}   \psi(t) dt d\mu \circ \widetilde{\Phi_{1}}(x,h) & \text{from (\ref{eq693}), (\ref{eq706})}\\
    = &  \int \conj{I(2 \pi k; F_x, \psi)} d\mu \circ \widetilde{\Phi_{1}}(x,h) ,
\end{aligned}
\end{equation*}
where $I$ is defined in (\ref{eq724}) of Theorem \ref{t61}.b, and $F_x(t) = (x-t)^2$. By applying stationary phase (Theorem \ref{t61}.b) to $I$, if $k>\lambda_0$ for $\lambda_0 \in \RR$ sufficiently large,
\begin{equation*}
    I(2 \pi k; F_x, \psi) =  (-2ik)^{-\frac{1}{2}}\psi(x) + E(x)k^{-1},
\end{equation*}
where $E: [-R, R] \to \CC$ is a complex valued-function. In addition, $|E(x)| \leq E_0$ for all $x\in[-R, R]$, where $E_0>0$ depends on $\lambda_0$, $R$, and $\norm{\psi}_{C_4}$. Therefore, we conjugate and integrate in $d\mu \circ \Phi_{1}(x,h)$ to obtain
\begin{equation*}
\begin{split}
     & \int \conj{I(2 \pi k; F_x, \psi)} d\mu \circ \widetilde{\Phi}_{1}(x,h) \\
     = & (2i)^{-\frac{1}{2}} k^{-\frac{1}{2}}\int   \psi(x) d\mu \circ \widetilde{\Phi}_{1} (x,h)+ k^{-1}\int \conj{E(x)} d\mu \circ \widetilde{\Phi}_{1}(x,h ), 
\end{split}
\end{equation*}
and
\begin{equation*}
        \left| \widehat{\nu}(ke_2) - (2i)^{-\frac{1}{2}} k^{-\frac{1}{2}}\int   \psi(x) d\mu \circ\widetilde{\Phi}_{1}(x,h) \right| \leq E_0 k^{-1}\int d\mu \circ \widetilde{\Phi}_{1}(x,h ).
\end{equation*}
Since $\psi(x)=1$ for $x \in [-R, R]$, and $\mu$ is a probability measure supported on $D'_1$ (defined in (\ref{eq640})), $$\int  \psi(x) d\mu \circ \widetilde{\Phi}_{1}(x,h) = \int d\mu \circ \widetilde{\Phi}_{1}(x,h)(x,h)= 1.$$ 
Therefore, $|\widehat{\nu}(ke_2)| \geq \frac{1}{2}  k^{-\frac{1}{2}}$ for $k \geq k_0$, where $k_0>0$ depends on $E_0$.
\end{itemize}
From these two facts, we have $s \leq 1$.
    \end{proof}

\section{Proof of Proposition \ref{p32}}
\label{sec764}

Our goal in this section is to show that if $\mu_0 \in \mathcal{M}(C)$, and $\sup_{\xi \in \RR^{d+1}} |\xi|^{\frac{s}{2}}|\widehat{\mu_0}(\xi)| < \infty$, then $s \leq d-1$.  We first show this statement for a simple case. Let $$ C_{\partial S} : = \{h(x, 1) | x \in \partial S , h \in \RR\}.$$ The set $C_{\partial S} \cup \{0\}$ is at most $d-1$ dimensional. If $\text{spt} \mu_0 \subset C_{\partial S} \cup \{0\}$, $s \leq d-1$ by Frostman's lemma.

The case remaining is where $\text{spt } \mu_0 \not \subset C_{\partial S} \cup \{ 0\}$. There exist $p \in \text{Int}S$ and $h \neq 0$, such that $h(p, 1) \in \text{spt} \mu_0$. Without loss of generality, we assume $h>0$.

Next, we find a coordinate map near the point $h(p, 1)$ that is used throughout this section. 
For $f \in C^{\infty}(\RR^n)$, we denote ${\bf{H}} f$, the Hessian of $f$, as the $n$ by $n$ matrix
\begin{equation}
\label{eq796}
   {\bf H}f := \left( \frac{\partial^2 f}{\partial y_j \partial y_k} \right).
\end{equation}
For $F \in C^{\infty}(\RR^n \times \RR^n)$ with inputs $x$ and $y$, we use
 ${\bf H}_x F(x, y)$ with the subscript $x$ to denote ${\bf H} F_y(x)$, where $F_y \in C^{\infty}(\RR^{n})$, $F_y(x) = F(x, y)$.

By Lemma \ref{lem63}, after some rotation and translation that move $p$ to $0 \in \RR^{d}$, there exists a coordinate map $\phi_{0}: V  \to S$, where $V \subset \RR^{d-1}$ is open and contains $0$, and
$$\phi_{0}(y)^T := (y, g(y))$$ for a $g: V \to \RR$ with $g(0)=0$, $\nabla g(0)=0$, and  $\det {\bf H}g(0) \neq 0$. We note that the Fourier dimension of a set is rotation and translation invariant.

Let $Q_m: \RR^{n} \to \RR$ be the unit quadratic form 
\begin{equation}
\label{eq764}
  Q_m(y) : =\sum_{j=1}^{m} y_j^2 - \sum_{j=m+1}^{n} y_j^2  
\end{equation} where $0 \leq m \leq n$, and $m$ is fixed. By applying Morse's lemma (Lemma \ref{lem32}) to $g$, there exist $U \subset \RR^{d-1}$, a neighborhood of $0$, and a smooth $\varphi: U \to V$ diffeomorphic to its image, such that $\phi_{0}(V) \subset S$ can be re-parameterize by $\phi: U \to S$, where
\begin{equation}
\label{eq581}
    \phi(x)^T := \phi_{0}(\varphi(x))^T=(\varphi(x)^T, Q_m(x)),
\end{equation}
 for $0 \leq m \leq d-1$.

Then the cone $C \subset \RR^{d+1}$ generated by $S$ has a coordinate map $\Phi : U \times \RR  \to \RR^{d+1}$ given by
\begin{equation}
\label{eq18}
    \Phi(x, h): = h(\phi(x), 1).
\end{equation}

Let $c<1$ to be chosen later. There exist $0<a<b$ such that for
\begin{equation}
\label{eq599}
    C' := \{\Phi(x, h) | x \in 2^{-1}cU, h \in [a, b]\},
\end{equation}
$\mu_0(C') > 0$. We apply Lemma \ref{lem23} to obtain a measure $\mu \in \mathcal{M}(C')$ with $|\widehat{\mu}(\xi)| \leq c_{\mu} |\xi|^{-\frac{s}{2}}$. By a normalization, we assume $\mu$ is a probability measure. The rest of the proof generalizes the arguments in section \ref{sec22}. 

\subsection{Constructing an average measure}

A preliminary task is to find re-parametrizations indexed by $t \in cU$ of the surface $S$ given by $x \to T_t(x)$ such that there exists $\eta: cU \to \RR^{d+1}$,
\begin{equation}
\label{eq6}
    Q_m(T_t(x)) = ( \phi(x), 1 ) \cdot \eta(t).
\end{equation}
Since for $h \in \RR$, $x \in cU$, $\Phi(x, h) = h(\phi(x), 1)$, the above yields the following relation:
\begin{equation}
\label{eq614}
    h   Q_m(T_t(x)) = \Phi(x, h) \cdot \eta(t).
\end{equation}

Assuming for the moment that such $\{T_t\}$ exists, the remainder of the proof is completed as follows. Each $T_t$ yields a new re-parametrization of $C$: 
\begin{equation*}
    \begin{split}
        C \supset & \{h(\phi(T_t(x)), 1) | x \in cU, h \in \RR \} \\
        = &  \{\Phi(T_t(x), h) | x \in cU, h \in \RR \}.
    \end{split}
\end{equation*}

Given $\mu \in \mathcal{M}(C')$ and $t \in cU$, we use this re-parametrization to define $\nu_t \in \mathcal{M}(C)$ as follows: for non-negative Borel functions $f$ on $\RR^{d+1}$,
\begin{equation}
\label{eq628}
    \langle f, \nu_t \rangle := \int f(\Phi(T_t(x), h)) d\mu \circ \Phi(x, h).
\end{equation}
Now we create a new measure $\nu \in \mathcal{M}(C)$ such that for non-negative Borel functions $f$ on $\RR^{d+1}$, 
\begin{equation*}
    \langle f, \nu \rangle := \int_{\RR^{d-1}} \langle f, \nu_t\rangle \psi(t) dt,
\end{equation*}
where $\psi \in C_{0}^{\infty}(\RR^{d-1})$ with $\psi \geq 0$, $\text{spt} \psi \subset cU$, and $\psi(t) = 1$ for $t \in 2^{-1}cU$. $\nu$ is a weighted average of the measures $\nu_t$. Then
\begin{equation}
\label{eq637}
    \widehat{\nu}(\xi) = \int_{\RR^{d-1}} \widehat{\nu_t}(\xi) \psi(t) dt.
\end{equation}

We have two lemmas about $\widehat{\nu}(\xi)$ when $\xi \| e_d$, where $e_d \in \RR^{d+1}$, $(e_d)_k = \delta_{d, k}$.

\begin{lemma}
\label{lem53}
    If $|\eta(t)| \geq 1$, $|\widehat{\nu}(k e_d)| \leq c_{\mu} \norm{\psi}_{L^1} k^{-\frac{s}{2}}.$ for $k>0$.
\end{lemma}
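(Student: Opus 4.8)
The plan is to mirror the first bullet point of the proof of Proposition~\ref{prop25}, now in arbitrary dimension. The key is the identity (\ref{eq614}): it converts the oscillation of $\nu_t$ in the direction $e_d$ into an evaluation of $\widehat{\mu}$ at the single point $k\eta(t)$, after which averaging in $t$ and using the decay of $\widehat{\mu}$ finishes the estimate.

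First I would compute $\widehat{\nu_t}(ke_d)$ for $k>0$. By the definition (\ref{eq628}) of $\nu_t$,
\[
\widehat{\nu_t}(ke_d) = \int e^{-2\pi i \Phi(T_t(x),h)\cdot ke_d}\, d\mu\circ\Phi(x,h).
\]
Since $\Phi(x,h)=h(\phi(x),1)$ and $\phi(x)^T=(\varphi(x)^T,Q_m(x))$ by (\ref{eq581}), the $d$-th coordinate of $\Phi(x,h)$ equals $hQ_m(x)$; hence $\Phi(T_t(x),h)\cdot ke_d = kh\,Q_m(T_t(x))$. Applying (\ref{eq614}) gives $\Phi(T_t(x),h)\cdot ke_d = k\,\Phi(x,h)\cdot\eta(t)$, so
\[
\widehat{\nu_t}(ke_d) = \int e^{-2\pi i \Phi(x,h)\cdot k\eta(t)}\, d\mu\circ\Phi(x,h) = \int e^{-2\pi i z\cdot k\eta(t)}\, d\mu(z) = \widehat{\mu}(k\eta(t)).
\]
Next I would average over $t$ using (\ref{eq637}):
\[
\widehat{\nu}(ke_d) = \int_{\RR^{d-1}} \widehat{\mu}(k\eta(t))\,\psi(t)\,dt.
\]
The decay hypothesis $|\widehat{\mu}(\xi)|\le c_\mu|\xi|^{-\frac{s}{2}}$ together with $|\eta(t)|\ge 1$ (so that $|k\eta(t)|^{-\frac{s}{2}}\le k^{-\frac{s}{2}}$ for $k>0$, $s\ge 0$) bounds the integrand pointwise by $c_\mu k^{-\frac{s}{2}}\psi(t)$; since $\psi\ge 0$, integrating gives $|\widehat{\nu}(ke_d)|\le c_\mu\norm{\psi}_{L^1}k^{-\frac{s}{2}}$, which is the claim.

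No serious obstacle arises at this step: the substantive content lies entirely in the identity (\ref{eq614}), hence in the construction of the family $\{T_t\}$ and of $\eta$, which is carried out separately, and in the trivial observation that $Q_m$ occupies the $d$-th slot of the coordinate map $\phi$. The only point needing a word of care is the reduction $|k\eta(t)|^{-\frac{s}{2}}\le k^{-\frac{s}{2}}$, which uses both $|\eta(t)|\ge 1$ (built into the hypothesis) and $s\ge 0$ (automatic, since Fourier dimension is nonnegative, so we may assume $s\ge 0$ throughout).
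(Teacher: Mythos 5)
Your argument is correct and is essentially identical to the paper's proof: you compute $\widehat{\nu_t}(ke_d)=\widehat{\mu}(k\eta(t))$ via (\ref{eq581}), (\ref{eq18}), and (\ref{eq614}), then average in $t$ using (\ref{eq637}) and apply the decay of $\widehat{\mu}$ together with $|\eta(t)|\geq 1$. Your extra remarks (the $d$-th coordinate of $\Phi$ being $hQ_m$, and the need for $s\geq 0$) simply make explicit what the paper leaves implicit.
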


\begin{proof}
     For $k>0$, 
\begin{equation*}
\begin{aligned}
\widehat{\nu_t}(ke_d) &  = \int e^{-2 \pi i \Phi(T_t(x), h) \cdot ke_d} d\mu \circ \Phi(x, h) & \text{from (\ref{eq628})}\\
&  = \int e^{-2 \pi i k h Q_m(T_t(x)) } d\mu \circ \Phi(x, h) & \text{by (\ref{eq581}), (\ref{eq18})}\\
 &  = \int e^{-2 \pi i \Phi(x, h) \cdot k \eta(t)} d\mu \circ \Phi(x, h) & \text{by (\ref{eq614})}\\
 &  = \int e^{-2 \pi i z \cdot k\eta(t)} d\mu(z)\\
& = \widehat{\mu}(k\eta(t)). 
\end{aligned}
\end{equation*} Then by (\ref{eq637})
\begin{equation*}
    |\widehat{\nu}(ke_d)| = \left|\int_{\RR^{d-1}} \widehat{\mu}(k\eta(t)) \psi(t)dt\right| \leq c_{\mu} \norm{\psi}_{L^1} k^{-\frac{s}{2}}.
\end{equation*}
\end{proof}

\begin{lemma}
\label{lem54}
Assuming Lemma \ref{lem708}, there exist $c_0, k_0>0$ such that for $k \geq  k_0$, $|\widehat{\nu}(ke_d)| \geq c_0 k^{-\frac{d-1}{2}}$.
\end{lemma}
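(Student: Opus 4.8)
The plan is to mimic the second bullet in the proof of Proposition~\ref{prop25}, now in $d-1$ variables. Starting from (\ref{eq628}) and (\ref{eq637}), I would swap the order of integration to write
\begin{equation*}
    \widehat{\nu}(ke_d) = \int \conj{I(2\pi k h; F_x, \psi)}\, d\mu\circ\Phi(x,h),
\end{equation*}
where $F_x(t) := Q_m(T_t(x))$ is a real-valued phase on $cU\subset\RR^{d-1}$ depending on the parameter $x$. The content of Lemma~\ref{lem708} (which I am allowed to assume) should be exactly that, for each $x\in 2^{-1}cU$, the function $t\mapsto F_x(t)$ has a unique nondegenerate critical point at $t=x$ inside $cU$, with $F_x(x)=0$, and that the Hessian ${\bf H}_t F_x(x)$ is nonsingular with determinant and operator norm controlled uniformly in $x$ (an analogue of part (2) of Lemma~\ref{lem26}, now with a $(d-1)\times(d-1)$ Hessian whose signature is that of $Q_m$). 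Given this, the multidimensional stationary phase expansion (Theorem~\ref{t61}.b) applies: for $kh > \lambda_0$ with $\lambda_0$ large enough,
\begin{equation*}
    I(2\pi k h; F_x, \psi) = (kh)^{-\frac{d-1}{2}}\, \frac{e^{i\frac{\pi}{4}\operatorname{sgn}{\bf H}_t F_x(x)}}{\left|\det {\bf H}_t F_x(x)\right|^{1/2}}\,\psi(x) + E(x)(kh)^{-\frac{d-1}{2}-\frac12},
\end{equation*}
with $|E(x)| \le E_0$ uniformly for $x\in 2^{-1}cU$, where $E_0$ depends on $\lambda_0$, the size of $cU$, a $C^N$ bound on $F_x$ uniform in $x$, and $\norm{\psi}_{C^N}$ for a suitable $N$ (here one must be a little careful to extract the correct power of $k$ and of $h$ from the stationary phase formula).

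Next I would conjugate and integrate $d\mu\circ\Phi(x,h)$. Since $\mu$ is a probability measure supported on $C'$ (defined in (\ref{eq599})), we have $a\le h\le b$ and $\psi(x)=1$ for $x\in 2^{-1}cU$, so the main term is bounded below in modulus:
\begin{equation*}
    \left|\int (kh)^{-\frac{d-1}{2}} \frac{e^{i\frac{\pi}{4}\operatorname{sgn}{\bf H}_t F_x(x)}}{\left|\det {\bf H}_t F_x(x)\right|^{1/2}}\psi(x)\, d\mu\circ\Phi(x,h)\right| \ge c_1 k^{-\frac{d-1}{2}}
\end{equation*}
for some $c_1 > 0$ depending on $a, b$ and the uniform Hessian bounds --- provided the phase factors $e^{i\frac{\pi}{4}\operatorname{sgn}{\bf H}_t F_x(x)}$ do not conspire to cancel. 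This is where I expect the only genuine subtlety: because the signature of ${\bf H}_t F_x(x)$ is the signature of $Q_m$ (i.e.\ $m$ positive and $d-1-m$ negative eigenvalues, by the Morse normal form (\ref{eq581})), it is \emph{constant} in $x$, so the exponential factor is a single unimodular constant that pulls out of the integral and causes no cancellation. Meanwhile the error term is $O(k^{-\frac{d-1}{2}-\frac12})$ after integration, hence negligible compared to the main term for $k$ large. Combining, $|\widehat{\nu}(ke_d)| \ge \tfrac12 c_1 k^{-\frac{d-1}{2}}$ for $k\ge k_0$, which is the claim with $c_0 = \tfrac12 c_1$.

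The main obstacle is really bookkeeping rather than conceptual: I must make sure Lemma~\ref{lem708} delivers \emph{uniform} control (in $x\in 2^{-1}cU$) of the lower bound on $|\det {\bf H}_t F_x(x)|$, the upper bound on the relevant $C^N$ norm of $F_x$, and the location/uniqueness of the critical point, so that the constants $\lambda_0, E_0, c_1$ are genuinely independent of $x$; and I must track the powers of $k$ and $h$ carefully through the $(d-1)$-dimensional stationary phase formula (in the $d=3$, $m$ arbitrary special cases of Section~\ref{sec267} the exponent was $-\tfrac12$, and here it becomes $-\tfrac{d-1}{2}$). Once these uniformities are in hand, the argument is a direct transcription of the $C_2$ computation, and together with Lemma~\ref{lem53} it pins $\widehat{\nu}(ke_d)$ between constant multiples of $k^{-s/2}$ and $k^{-(d-1)/2}$, forcing $s\le d-1$.
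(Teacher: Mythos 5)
Your proposal is correct and follows essentially the same route as the paper: exchange the order of integration, apply the stationary phase expansion of Theorem~\ref{t61}.b uniformly in $x$ (with the uniform Hessian and critical-point control supplied by Lemma~\ref{lem708}), observe that the unimodular phase constant is independent of $x$ because the signature of ${\bf H}_tF_x(x)$ is that of $Q_m$, and absorb the $O\bigl((kh)^{-d/2}\bigr)$ error using $a\le h\le b$. The paper's proof does exactly this, writing the constant as $\conj{c^{(d)}_m}$ with $c^{(d)}_m=i^{\frac{3(d-1)}{2}-m}$, so no further comparison is needed.
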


\begin{proof}   
By changing the order of integrations,
\begin{equation*}
\begin{aligned}
    \widehat{\nu}(ke_d) = & \int \int e^{-2 \pi i k h Q_m(T_t(x)) } d\mu \circ \Phi(x, h) & \text{from (\ref{eq628}), (\ref{eq637})} \\
    = & \int \conj{I(2\pi k h; F_x,\psi)}d\mu \circ \Phi(x,h),
\end{aligned}
\end{equation*}
where $F_x(t) =  Q_m(T_t(x))$, and $I$ is defined in (\ref{eq724}) of Theorem \ref{t61}. Suppose $F_x$ satisfies the conditions stated in Lemma \ref{lem708}. Let $c_{m}^{(d)} = i^{\frac{3(d-1)}{2}-m}$, and $H(x) = \det {\bf H} F_x(x)$, where the Hessian ${\bf H}$ is defined in (\ref{eq796}). We can estimate the integrand $\conj{I(2\pi k h; F_x,\psi)}$ using stationary phase (Theorem \ref{t61}) to obtain
\begin{equation*}
    I(2\pi k h; F_x,\psi) =  c^{(d)}_m (k h)^{-\frac{d-1}{2}} |H(x)|^{-\frac{1}{2}}\psi(x) + E(x)(kh)^{-\frac{d}{2}},
\end{equation*}
where $kh>\lambda_0$ for $\lambda_0 \in \RR$ sufficiently large and $E: cU \to \CC$ is a complex-valued function. In addition, $|E(x)| \leq E_0$, where $E_0>0$ depends on $\lambda_0$, size of $cU$, $\sup \{\norm{F_x}_{C^{d+5}} | x \in cU\}$, and $\norm{\psi}_{C^{d+2}}$. Therefore, we conjugate and integrate in $d\mu \circ \Phi(x,h)$ to obtain
\begin{equation*}
\begin{aligned}
    & \int \conj{I(2\pi k h; F_x,\psi)}d\mu \circ \Phi(x,h) \\
    = & \conj{c^{(d)}_m}k^{-\frac{d-1}{2}}\int  h^{-\frac{d-1}{2}} |H(x)|^{-\frac{1}{2}} \psi(x) d\mu \circ \Phi(x,h) + k^{-\frac{d}{2}}\int \conj{E(x)}h^{-\frac{d}{2}} d\mu \circ \Phi(x,h),
\end{aligned}
\end{equation*} and
\begin{equation*}
\left| \widehat{\nu}(ke_d) - \conj{c^{(d)}_m} k^{-\frac{d-1}{2}}\int h^{-\frac{d-1}{2}} |H(x)|^{-\frac{1}{2}} \psi(x)  d\mu \circ \Phi(x,h)  \right| \leq  E_0 k^{-\frac{d}{2}}\int h^{-\frac{d}{2}} d\mu \circ \Phi(x,h).
\end{equation*} 
Suppose $\epsilon \leq  |H(x)| \leq \epsilon^{-1}$ for an $\epsilon > 0$. Since $0 < a \leq h \leq b$, $\psi(x) = 1$ for $x \in 2^{-1}cU$, $\mu$ is a probability measure supported on $C'$ ($C'$ is defined in (\ref{eq599})), 
\begin{equation*}
    \int  h^{-\frac{d-1}{2}} |H(x)|^{-\frac{1}{2}} \psi(x) d\mu \circ \Phi(x,h) \geq \epsilon^{-\frac{1}{2}} b^{-\frac{d-1}{2}},
\end{equation*}
and
\begin{equation*}
     \int h^{-\frac{d}{2}} d\mu \circ \Phi(x,h) \leq a^{-\frac{d}{2}}.
\end{equation*}
So $|\widehat{\nu}(ke_d)| \geq c_0 k^{-\frac{d-1}{2}}$ for $k \geq k_0$, where $c_0, k_0>0$ depend on $\epsilon, E_0, a$, $b$, and $d$.

\end{proof}

From these two lemmas, we have $s \leq d-1$. It remains to show that such $\{T_t\}$ exists in Section \ref{sec_ex}.

\section{Proof of Proposition \ref{p34}}
\label{sec981}

Our goal in this section is to show that if $\mu_0 \in \mathcal{M}(D)$, and $\sup_{\xi \in \RR^{d+1}}|\xi|^{\frac{s}{2}} |\widehat{\mu_0}(\xi)| < \infty$, then $s \leq d-1$.  We first show this statement for a simple case. Let $$D_{\partial S} := \partial S \times \RR \{(x, h) | x \in \partial S , h \in \RR\}.$$ The set $D_{\partial S}$ is at most $d-1$ dimensional. If $\text{spt} \mu_0 \subset D_{\partial S}$, $s \leq d-1$ by Frostman's lemma. The case remaining is where $\text{spt } \mu_0 \not \subset D_{\partial S}$. There exist $p \in \text{Int}S$ and $h \in \RR$, such that $(p, h) \in \text{spt} \mu_0$. 

Next, we find a coordinate map near the point $(p, h)$ that is used throughout this section. By Lemma \ref{lem63}, after some rotation and translation that move $p$ to $0 \in \RR^{d}$, there exists a coordinate map $\phi_{0}: V  \to S$, where $V \subset \RR^{d-1}$ is open and contains $0$, and
$$\phi_{0}(y)^T := (y, g(y))$$ for a $g: V \to \RR$ with $g(0)=0$, $\nabla g(0)=0$, and  $\det {\bf H}g(0) \neq 0$, where the Hessian ${\bf H}$ is defined in (\ref{eq796}). We note that the Fourier dimension of a set is rotation and translation invariant.

By applying Morse's lemma (Lemma \ref{lem32}) to $g$, there exist $U \subset \RR^{d-1}$, a neighborhood of $0$, and a smooth $\varphi: U \to V$ diffeomorphic to its image, such that $\phi_{0}(V) \subset S$ can be re-parameterize by $\phi: U \to S$, where
\begin{equation}
\label{eq991}
    \phi(x)^T := \phi_{0}(\varphi(x))^T=(\varphi(x)^T, Q_m(x)),
\end{equation}
 for $0 \leq m \leq d-1$, and the quadratic form $Q_m$ is defined in (\ref{eq764}).

Then the cylinder $D \subset \RR^{d+1}$ generated by $S$ has a coordinate map $\widetilde{\Phi} : U \times \RR  \to \RR^{d+1}$ given by
\begin{equation}
\label{eq998}
    \widetilde{\Phi}(x, h) := (\phi(x), h).
\end{equation}

Let $c<1$ to be chosen later, and
\begin{equation}
\label{eq1004}
    D' := \{\widetilde{\Phi}(x, h) | x \in 2^{-1}cU, h \in \RR\},
\end{equation}
$\mu_0(D') > 0$. We apply Lemma \ref{lem23} to obtain a measure $\mu \in \mathcal{M}(D')$ with $|\widehat{\mu}(\xi)| \leq c_{\mu} |\xi|^{-\frac{s}{2}}$. By a normalization, we assume $\mu$ is a probability measure. The rest of the proof adapts the one in section \ref{sec764} for the cone $C$.

\subsection{Constructing an average measure}

A preliminary task is to find re-parametrizations indexed by $t \in cU$ of the surface $S$ given by $x \to T_t(x)$ such that there exist $\widetilde{\eta}: cU \to \RR^{d+1}$ and $\widetilde{\rho}: cU \to \RR$ with 
\begin{equation}
\label{eq1013}
    Q_m(T_t(x)) = \widetilde{\Phi}(x, h) \cdot \widetilde{\eta}(t) + \widetilde{\rho}(t).
\end{equation}

Assuming for the moment that such $\{T_t\}$ exists, the remainder of the proof is completed as follows. Each $T_t$ yields a new re-parametrization of $D$: 
\begin{equation*}
    \begin{split}
        D \supset &  \{\widetilde{\Phi}(T_t(x), h) | x \in cU, h \in \RR \}.
    \end{split}
\end{equation*}

Given $\mu \in \mathcal{M}(D')$ and $t \in cU$, we use this re-parametrization to define $\nu_t \in \mathcal{M}(D)$ as follows: for non-negative Borel functions $f$ on $\RR^{d+1}$,
\begin{equation}
\label{eq1026}
    \langle f, \nu_t \rangle : = \int f(\widetilde{\Phi}(T_t(x), h)) d\mu \circ \widetilde{\Phi}(x, h).
\end{equation}
Now we create a new measure $\nu \in \mathcal{M}(D)$ such that for non-negative Borel functions $f$ on $\RR^{d+1}$, 
\begin{equation*}
    \langle f, \nu \rangle : = \int_{\RR^{d-1}} \langle f, \nu_t\rangle \psi(t) dt,
\end{equation*}
where $\psi \in C_{0}^{\infty}(\RR^{d-1})$ with $\psi \geq 0$, $\text{spt} \psi \subset cU$, and $\psi(t) = 1$ for $t \in 2^{-1}cU$. $\nu$ is a weighted average of the measures $\nu_t$. Then
\begin{equation}
\label{eq1035}
    \widehat{\nu}(\xi) = \int_{\RR^{d-1}} \widehat{\nu_t}(\xi) \psi(t) dt.
\end{equation}

We have two lemmas about $\widehat{\nu}(\xi)$ when $\xi \| e_d$, where $e_d \in \RR^{d+1}$, $(e_d)_k = \delta_{d, k}$.

\begin{lemma}
\label{lem1042}
    If $|\widetilde{\eta}(t)| \geq 1$, $|\widehat{\nu}(k e_d)| \leq c_{\mu} \norm{\psi}_{L^1} k^{-\frac{s}{2}}.$ for $k>0$.
\end{lemma}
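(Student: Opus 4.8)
The plan is to mimic the proof of Lemma \ref{lem53} almost verbatim, with the only differences coming from the product structure of the coordinate map $\widetilde{\Phi}$ and the presence of the extra term $\widetilde{\rho}(t)$ in the relation \eqref{eq1013}. First I would fix $k > 0$ and compute $\widehat{\nu_t}(ke_d)$ directly from the definition \eqref{eq1026}. Since $e_d$ pairs only with the $Q_m(x)$-coordinate of $\widetilde{\Phi}(x,h) = (\varphi(x), Q_m(x), h)$, the phase becomes $-2\pi i k h^{0}\cdot Q_m(T_t(x))$; more precisely the inner product $\widetilde{\Phi}(T_t(x), h) \cdot k e_d$ equals $k Q_m(T_t(x))$ because the $d$-th coordinate of $\widetilde{\Phi}$ is exactly $Q_m$ (by \eqref{eq991} and \eqref{eq998}), with no factor of $h$ — this is the key structural difference from the cone case.

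Next I would substitute \eqref{eq1013} to rewrite $Q_m(T_t(x)) = \widetilde{\Phi}(x,h) \cdot \widetilde{\eta}(t) + \widetilde{\rho}(t)$, so that
\begin{equation*}
\widehat{\nu_t}(ke_d) = e^{-2\pi i k \widetilde{\rho}(t)} \int e^{-2\pi i \widetilde{\Phi}(x,h) \cdot k\widetilde{\eta}(t)} \, d\mu \circ \widetilde{\Phi}(x,h) = e^{-2\pi i k \widetilde{\rho}(t)} \, \widehat{\mu}(k\widetilde{\eta}(t)).
\end{equation*}
The unimodular factor $e^{-2\pi i k \widetilde{\rho}(t)}$ has absolute value $1$ and hence does not affect the estimate. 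Then, integrating against $\psi$ as in \eqref{eq1035} and using the hypothesis $|\widehat{\mu}(\xi)| \leq c_\mu |\xi|^{-s/2}$ together with $|\widetilde{\eta}(t)| \geq 1$ (so that $|k\widetilde{\eta}(t)|^{-s/2} \leq k^{-s/2}$), I would conclude
\begin{equation*}
|\widehat{\nu}(ke_d)| = \left| \int_{\RR^{d-1}} e^{-2\pi i k \widetilde{\rho}(t)} \widehat{\mu}(k\widetilde{\eta}(t)) \psi(t) \, dt \right| \leq c_\mu \|\psi\|_{L^1} k^{-\frac{s}{2}}.
\end{equation*}

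There is essentially no obstacle here: the lemma is a routine adaptation of Lemma \ref{lem53}, and the only thing to be careful about is that the coordinate $h$ does not enter the phase when testing against $e_d$ (so no $h^{-s/2}$ factor appears, unlike in the cone), and that the extra additive term $\widetilde{\rho}(t)$ contributes only a harmless modulus-one factor. The genuinely substantive content — the existence of the re-parametrizations $\{T_t\}$ and the functions $\widetilde{\eta}, \widetilde{\rho}$ satisfying \eqref{eq1013} with $|\widetilde{\eta}(t)| \geq 1$, as well as the companion lower bound (the analog of Lemma \ref{lem54}) — is deferred to the technical lemma in Section \ref{sec_ex}, exactly as in the cone case.
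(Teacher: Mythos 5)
Your proposal matches the paper's proof essentially verbatim: both compute $\widehat{\nu_t}(ke_d) = e^{-2\pi i k \widetilde{\rho}(t)}\,\widehat{\mu}(k\widetilde{\eta}(t))$ by pairing $e_d$ with the $Q_m$-coordinate of $\widetilde{\Phi}$ (with no $h$ factor) and substituting \eqref{eq1013}, then integrate against $\psi$ via \eqref{eq1035} and use $|\widetilde{\eta}(t)| \geq 1$ to absorb the decay into $k^{-s/2}$. The argument is correct, including the observation that the unimodular factor $e^{-2\pi i k \widetilde{\rho}(t)}$ is harmless.
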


\begin{proof}
     For $k>0$, 
\begin{equation*}
\begin{aligned}
\widehat{\nu_t}(ke_d) &  = \int e^{-2 \pi i \widetilde{\Phi}(T_t(x), h) \cdot ke_d} d\mu \circ \widetilde{\Phi}(x, h) & \text{from (\ref{eq1026})}\\
&  = \int e^{-2 \pi i k Q_m(T_t(x)) } d\mu \circ \widetilde{\Phi}(x, h) & \text{by (\ref{eq991}), (\ref{eq998})}\\
 &  = e^{-2\pi i k \widetilde{\rho}(t)}\int e^{-2 \pi i k \widetilde{\Phi}(x, h) \cdot \widetilde{\eta}(t) } d\mu \circ \widetilde{\Phi}(x, h) & \text{by (\ref{eq1013})}\\
 &  = e^{-2\pi i k \widetilde{\rho}(t)} \int e^{-2 \pi i z \cdot k\widetilde{\eta}(t)} d\mu(z)\\
& = e^{-2\pi i k \widetilde{\rho}(t)} \widehat{\mu}(k\widetilde{\eta}(t)). 
\end{aligned}
\end{equation*} Then by (\ref{eq1035})
\begin{equation*}
    |\widehat{\nu}(ke_d)| = \left|\int_{\RR^{d-1}} e^{-2\pi i k \widetilde{\rho}(t)} \widehat{\mu}(k\widetilde{\eta}(t)) \psi(t)dt\right| \leq c_{\mu} \norm{\psi}_{L^1} k^{-\frac{s}{2}}.
\end{equation*}
\end{proof}

\begin{lemma}
\label{lem1065}
Assuming Lemma \ref{lem708}, there exist $c_0, k_0>0$ such that for $k \geq  k_0$, $|\widehat{\nu}(ke_d)| \geq c_0 k^{-\frac{d-1}{2}}$.
\end{lemma}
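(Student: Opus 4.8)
The plan is to mirror the proof of Lemma \ref{lem54}, substituting the cylinder coordinate map $\widetilde{\Phi}$ for the cone coordinate map $\Phi$ and carrying along the extra phase term $\widetilde{\rho}(t)$ that appears in \eqref{eq1013}. First I would change the order of integration in \eqref{eq1035} using the computation
\begin{equation*}
    \widehat{\nu}(ke_d) = \int \int_{\RR^{d-1}} e^{-2\pi i k Q_m(T_t(x))}\psi(t)\,dt\, d\mu\circ\widetilde{\Phi}(x,h) = \int \conj{I(2\pi k; F_x,\psi)}\, d\mu\circ\widetilde{\Phi}(x,h),
\end{equation*}
where $F_x(t) = Q_m(T_t(x))$ and $I$ is the oscillatory integral from \eqref{eq724} of Theorem \ref{t61}. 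Note that because the last coordinate of $\widetilde{\Phi}$ is $h$ rather than a factor multiplying the surface, the frequency in the inner integral is $2\pi k$, not $2\pi k h$ — this is the main structural difference from the cone case and it makes the $h$-dependence disappear from the stationary-phase expansion.

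Next I would invoke Lemma \ref{lem708} to guarantee that $F_x$ satisfies the hypotheses needed for stationary phase (a single nondegenerate critical point at $t=x$ with uniformly controlled Hessian determinant and higher derivatives), and apply Theorem \ref{t61} to get, for $k > \lambda_0$,
\begin{equation*}
    I(2\pi k; F_x,\psi) = c^{(d)}_m k^{-\frac{d-1}{2}}|H(x)|^{-\frac{1}{2}}\psi(x) + E(x) k^{-\frac{d}{2}},
\end{equation*}
with $c^{(d)}_m = i^{\frac{3(d-1)}{2}-m}$, $H(x) = \det {\bf H}F_x(x)$, and $|E(x)|\le E_0$ uniformly, where $E_0$ depends on $\lambda_0$, the size of $cU$, $\sup\{\norm{F_x}_{C^{d+5}}: x\in cU\}$, and $\norm{\psi}_{C^{d+2}}$. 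Conjugating and integrating against $d\mu\circ\widetilde{\Phi}(x,h)$ gives
\begin{equation*}
    \left| \widehat{\nu}(ke_d) - \conj{c^{(d)}_m} k^{-\frac{d-1}{2}} \int |H(x)|^{-\frac{1}{2}}\psi(x)\, d\mu\circ\widetilde{\Phi}(x,h) \right| \le E_0 k^{-\frac{d}{2}}.
\end{equation*}
Here the leading term is cleaner than in the cone case: there is no $h^{-\frac{d-1}{2}}$ weight, so the main term does not involve $a,b$ at all.

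Then, assuming as in Lemma \ref{lem708} that $\epsilon \le |H(x)| \le \epsilon^{-1}$ for some $\epsilon>0$, and using that $\psi(x)=1$ on $2^{-1}cU$ and $\mu$ is a probability measure supported on $D'$ (defined in \eqref{eq1004}), I would bound $\int |H(x)|^{-\frac{1}{2}}\psi(x)\, d\mu\circ\widetilde{\Phi}(x,h) \ge \epsilon^{-\frac{1}{2}}$ from below. Combining this with the error bound and choosing $k_0$ large enough that $E_0 k_0^{-\frac{d}{2}} \le \tfrac{1}{2}\epsilon^{-\frac{1}{2}} k_0^{-\frac{d-1}{2}}$, i.e. $E_0 k_0^{-1/2} \le \tfrac12 \epsilon^{-1/2}$, yields $|\widehat{\nu}(ke_d)| \ge c_0 k^{-\frac{d-1}{2}}$ for $k\ge k_0$ with $c_0 = \tfrac12\epsilon^{-\frac12}$ (depending only on $\epsilon, E_0$, and $d$). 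I do not expect a genuine obstacle here, since the argument is a streamlined version of Lemma \ref{lem54}; the only point requiring care is making sure the extra factor $e^{-2\pi i k\widetilde{\rho}(t)}$ from \eqref{eq1013}, which was already absorbed in the proof of Lemma \ref{lem1042}, does not reappear in this computation — and indeed it does not, because here we expand $\widehat{\nu}(ke_d)$ directly via $F_x(t) = Q_m(T_t(x))$ without passing through $\widehat{\mu}$, so $\widetilde{\rho}$ never enters.
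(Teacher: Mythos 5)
Your proposal is correct and follows essentially the same route as the paper's own proof of Lemma \ref{lem1065}: change the order of integration, apply Theorem \ref{t61} to $I(2\pi k; F_x,\psi)$ with the hypotheses supplied by Lemma \ref{lem708}, and compare the main term against the $k^{-\frac{d}{2}}$ error, correctly observing that the $h$-weights and the phase $\widetilde{\rho}$ play no role here. One cosmetic point (shared with the paper's write-up): from $\epsilon \leq |H(x)| \leq \epsilon^{-1}$ the lower bound on $\int |H(x)|^{-\frac{1}{2}}\psi(x)\,d\mu\circ\widetilde{\Phi}(x,h)$ should be $\epsilon^{\frac{1}{2}}$ rather than $\epsilon^{-\frac{1}{2}}$, which only changes the constant $c_0$ and not the conclusion.
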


\begin{proof}   
By changing the order of integrations,
\begin{equation*}
\begin{aligned}
    \widehat{\nu}(ke_d) = & \int \int e^{-2 \pi i k Q_m(T_t(x)) } d\mu \circ \widetilde{\Phi}(x, h) & \text{from (\ref{eq1026}), (\ref{eq1035})} \\
    = & \int \conj{I(2\pi k; F_x,\psi)}d\mu \circ \widetilde{\Phi}(x,h),
\end{aligned}
\end{equation*}
where $F_x(t) =  Q_m(T_t(x))$, and $I$ is defined in (\ref{eq724}) of Theorem \ref{t61}. Suppose $F_x$ satisfies the conditions stated in Lemma \ref{lem708}. Let $c_{m}^{(d)} = i^{\frac{3(d-1)}{2}-m}$, and $H(x) = \det {\bf H} F_x(x)$, where the Hessian ${\bf H}$ is defined in (\ref{eq796}). We estimate the integrand $\conj{I(2\pi k; F_x,\psi)}$ using stationary phase (Theorem \ref{t61}) to obtain
\begin{equation*}
    I(2\pi k; F_x,\psi) =  c^{(d)}_m k^{-\frac{d-1}{2}}  |H(x)|^{-\frac{1}{2}}\psi(x) + E(x)k^{-\frac{d}{2}},
\end{equation*}
where $kh>\lambda_0$ for $\lambda_0 \in \RR$ sufficiently large and $E: cU \to \CC$ is a complex-valued function. In addition, $|E(x)| \leq E_0$ for $x \in cU$, where $E_0>0$ depends on $\lambda_0$, size of $cU$, $\sup\left\{\norm{F_x}_{C^{d+5}} | x \in cU\right\}$,  and $\norm{\psi}_{C^{d+2}}$. Therefore, we conjugate and integrate in $d\mu \circ \widetilde{\Phi}(x,h)$ to obtain
\begin{equation*}
\begin{aligned}
    & \int \conj{I(2\pi k; F_x,\psi)}d\mu \circ \widetilde{\Phi}(x,h) \\
    = & \conj{c^{(d)}_m} k^{-\frac{d-1}{2}}\int  |H(x)|^{-\frac{1}{2}} \psi(x) d\mu \circ \widetilde{\Phi}(x,h) + k^{-\frac{d}{2}}\int \conj{E(x)} d\mu \circ \widetilde{\Phi}(x,h),
\end{aligned}
\end{equation*} and
\begin{equation*}
\left| \widehat{\nu}(ke_d) - \conj{c^{(d)}_m} k^{-\frac{d-1}{2}}\int |H(x)|^{-\frac{1}{2}} \psi(x)  d\mu \circ \widetilde{\Phi}(x,h)  \right| \leq  E_0 k^{-\frac{d}{2}}\int d\mu \circ \widetilde{\Phi}(x,h).
\end{equation*} 
Suppose $\epsilon \leq  |H(x)| \leq \epsilon^{-1}$ for an $\epsilon > 0$. Since  $\psi(x) = 1$ for $x \in 2^{-1}cU$, $\mu$ is a probability measure supported on $D'$ ($D'$ is defined in (\ref{eq1004})), 
\begin{equation*}
    \int  |H(x)|^{-\frac{1}{2}} \psi(x) d\mu \circ \widetilde{\Phi}(x,h) \geq \epsilon^{-\frac{1}{2}},
\end{equation*}
and
\begin{equation*}
     \int  d\mu \circ \widetilde{\Phi}(x,h) = 1.
\end{equation*}
So $|\widehat{\nu}(ke_d)| \geq c_0 k^{-\frac{d-1}{2}}$ for $k \geq k_0$, where $c_0, k_0>0$ depend on $\epsilon, E_0$ and $d$.
\end{proof}

From these two lemmas, we have $s \leq d-1$. It remains to show that such $\{T_t\}$ exists in Section \ref{sec_ex}.

\section{Existence of the transformation $T_t$}
\label{sec_ex}

For $g: \RR^{n} \to \RR^{n}$, we denote ${\bf J} g$, the Jacobian of $g$, as the $n$ by $n$ matrix
\begin{equation}
\label{eq814}
    {\bf J} g := \begin{pmatrix}
        \frac{\partial g_1}{\partial y_1} & \cdots &\frac{\partial g_1}{\partial y_n} \\
        \vdots & \ddots & \vdots \\
        \frac{\partial g_n}{\partial y_1} & \cdots &\frac{\partial g_n}{\partial y_n} 
    \end{pmatrix}.
\end{equation}

For $G : \RR^n \times \RR^n \to \RR^n$  with inputs $x$ and $y$, we use
 ${\bf J}_x G(x, y)$ with the subscript $x$ to denote ${\bf J} G_y(x)$, where $G_y: \RR^n \to \RR^n$, $G_y(x) = G(x, y)$.

\begin{lemma}
\label{lem708}
\begin{enumerate}
    \item Let $U$ be a neighborhood of $0$. There exist $c> 0$ and a function $\eta: cU \to \RR^{d+1}$ with $|\eta(t)| \geq 1$. For each $t \in cU$, there exists a function $T_t: cU \to U$ satisfying the equation \begin{equation}
     Q_m(T_t(x))  = ( \phi(x), 1 ) \cdot \eta(t) \tag{\ref{eq6} revisited}.
\end{equation}
Here, $Q_m$ is defined in (\ref{eq764}) and $\phi$ is defined in (\ref{eq581}). Alternatively, the above equation can be written as \begin{equation}
     Q_m(T_t(x)) = \widetilde{\Phi}(x, h) \cdot \widetilde{\eta}(t) + \widetilde{\rho}(t). \tag{\ref{eq1013} revisited}
\end{equation} In the expression above, $\widetilde{\Phi}$ is defined in (\ref{eq998}), $\widetilde{\eta}: cU \to \RR^{d+1}$ with $|\widetilde{\eta}(t)| \geq 1$ and $\widetilde{\rho}: cU \to \RR$.
    \item \label{i717} Additionally, for each $x \in cU$, the function $F_x: cU \to \RR$ defined as $F_x(t) : = Q_m(T_t(x))$ satisfies the following three properties:
\begin{enumerate}
    \item $F_x(x)=0$.
    \item The gradient of $F_x$ with respect to $t$, $\nabla_t F_x(t)= 0$ when $t \in cU $ if and only if $t=x$.				
    \item \label{i721} ${\bf H} F_0(0) = {\bf H} Q_m(0)$, and there exists $ \epsilon > 0$, such that for $t \in cU$, $$|H(x)|=|\det {\bf H} F_x(t)| \in [\epsilon,  \epsilon^{-1}], $$
\end{enumerate}
where the Hessian ${\bf H}$ is defined in (\ref{eq796}).
\end{enumerate}
\end{lemma}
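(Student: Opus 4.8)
\textbf{Proof plan for Lemma \ref{lem708}.}

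The plan is to construct $T_t$ explicitly as a perturbation of the ``naive'' translation $x \mapsto x - t$, exactly as in the two-dimensional model case of Lemma \ref{lem26}, and then verify the required properties via the implicit function theorem and continuity. First I would look for the covector $\eta(t)$ (resp. $\widetilde{\eta}(t)$, $\widetilde{\rho}(t)$). Write $\phi(x)^T = (\varphi(x)^T, Q_m(x))$ as in (\ref{eq581}), so that (\ref{eq6}) becomes a system asking for a linear functional of $(\varphi(x), Q_m(x), 1)$ that agrees, as a function of $x$, with a perfect square $Q_m(\,\cdot\,)$ of some vector depending on $t$. The natural normalization, following the model case, is to demand that $F(x,t) := (\phi(x),1)\cdot\eta(t)$ vanishes to second order at $x = t$ in the $t$ variable: impose $F(t,t) = 0$ and $\nabla_t F(t,t) = 0$. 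Since $F$ is linear in the coefficient vector $\eta(t)$, these $1 + (d-1)$ conditions, together with a convenient choice of ``leading coefficient'' forcing the $\varphi_1$-component (or the last coordinate) to equal $1$ so that $|\eta(t)|\geq 1$ automatically, determine $\eta(t)$ smoothly in $t$ for $t$ in a small neighborhood $cU$ of $0$. Concretely one solves the linear system using ${\bf J}\varphi(0) = \mathrm{Id}$ (which holds because $\varphi$ is the Morse change of variables), so the relevant matrix is invertible at $t=0$ and hence on a neighborhood. The cylinder version (\ref{eq1013}) is obtained by the same computation with the extra freedom of an additive constant $\widetilde{\rho}(t)$, which absorbs the term that in the cone case was carried by the homogeneity variable $h$; compare the role of $t^2$ in (\ref{eq716}).

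Next I would \emph{define} $T_t$. Having produced $\eta(t)$, the right-hand side $(\phi(x),1)\cdot\eta(t)$ is a smooth function of $(x,t)$ which, by construction, vanishes to exactly second order in $x$ at $x = t$ and whose Hessian in $x$ at $x=t$ is a small perturbation of ${\bf H}Q_m(0) = 2\,\mathrm{diag}(1,\dots,1,-1,\dots,-1)$ (with $m$ plus signs). By a parametrized Morse lemma / Morse--Palais-type normal form — applied with parameter $t$, or simply by following the explicit square-root extraction done at the end of the proof of Lemma \ref{lem26} — there is a smooth map $T_t: cU \to U$, depending smoothly on $t$, with $T_t(t) = 0$ and $Q_m(T_t(x)) = (\phi(x),1)\cdot\eta(t)$; shrinking $c$ if necessary, $T_t$ is a diffeomorphism onto its image and $T_0 = \mathrm{id}$ up to higher-order terms. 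The explicit form will be $T_t(x) = A(x,t)\,(x-t)$ for a smooth matrix-valued $A$ with $A(0,0)$ an isometry conjugating $Q_m$ to itself, mirroring $T_t(x) = (x-t)\sqrt{(1-3x^2+\cdots)/(1+3t^2)}$ in the scalar case.

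It remains to check part (\ref{i717}). Property (a), $F_x(x) = Q_m(T_t(x))|_{t=x} = Q_m(T_x(x)) = Q_m(0) = 0$, is immediate from $T_t(t) = 0$. For property (b), note $\nabla_t F_x(t) = 0$ is equivalent, via the chain rule and the invertibility of ${\bf J}_tT_t(x)$ (valid on $cU$ after shrinking $c$), to $\nabla Q_m$ vanishing at $T_t(x)$, i.e. $T_t(x) = 0$; and $T_t(x) = 0$ forces $x = t$ since $T_t$ is injective and $T_t(t)=0$. The one subtlety here — and the step I expect to be the main obstacle — is ensuring that $x = t$ is the \emph{only} critical point inside $cU$, including the degenerate directions of the indefinite form $Q_m$ when $0 < m < d-1$; this is where one genuinely uses that the construction keeps ${\bf H}_tF_x$ uniformly nondegenerate, so that no spurious critical points can appear, and it is handled by taking $c$ small and invoking the quantitative inverse function theorem on $\nabla_t F_x$. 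For property (c), ${\bf H}F_0(0) = {\bf H}Q_m(0)$ follows because $T_0 = \mathrm{id} + O(|x|^2)$, so the second-order Taylor coefficient of $Q_m(T_0(x))$ at $0$ is just that of $Q_m$; the uniform two-sided bound $|\det {\bf H}F_x(t)| \in [\epsilon,\epsilon^{-1}]$ on the compact set $\overline{cU}$ then follows from continuity of $(x,t)\mapsto \det {\bf H}_tF_x(t)$ together with its nonvanishing at $(0,0)$, after one last shrinking of $c$. Throughout, the only non-routine point is the simultaneous control, uniformly in the parameter $t$, of the Morse normalization and of the global injectivity of $T_t$ on a fixed neighborhood; everything else is the implicit function theorem plus compactness.
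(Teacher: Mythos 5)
Your plan is essentially the paper's proof: solve the $d\times(d+1)$ linear system expressing that $F(x,t)=(\phi(x),1)\cdot\eta(t)$ vanishes to second order on the diagonal, normalize the one-dimensional kernel so $|\eta(t)|\geq 1$, produce $T_t$ from the parametrized Morse lemma applied to $G(x,t):=F(x+t,t)$ (so $T_t(x)=\tau(x-t,t)$), verify (a)--(c) by the implicit function theorem, continuity and shrinking $c$, and obtain the cylinder identity by splitting off the last component of $\eta$ as $\widetilde{\rho}$.

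Two details in your write-up need correcting, though neither changes the architecture. First, the normalization cannot be ``the $\varphi_1$-component (or the last coordinate) equal to $1$'': at $t=0$ the rows $[{\bf J}\varphi(0)]^{T}\eta'+ (A_m\cdot 0)\eta_d=0$ force $\eta'(0)=0$ (only invertibility of ${\bf J}\varphi(0)$ is available, not ${\bf J}\varphi(0)=\mathrm{Id}$, but invertibility suffices), and then the first row forces the constant-coefficient $\eta_{d+1}(0)=0$; so the kernel at $t=0$ is spanned by the coefficient of $Q_m(x)$ alone, and normalizing any other component to $1$ is impossible at $t=0$ and blows up nearby. The viable choice, made in the paper and in the model case of Lemma \ref{lem26}, is to set the coefficient of $Q_m(x)$ equal to $1$, which also gives ${\bf H}_xF(0,0)={\bf H}Q_m(0)$ on the nose. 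Second, in (c) the Hessian ${\bf H}F_0(0)$ is taken in the $t$-variable, i.e.\ of $t\mapsto Q_m(T_t(0))$, whereas your justification computes the $x$-Hessian of $x\mapsto Q_m(T_0(x))$. The two do coincide here (differentiate the identities $F(x,x)=0$ and $\nabla_tF(x,x)=0$ along the diagonal, or compute $\nabla_tF$ directly from the explicit formula for $\eta$ as the paper does), but as written your argument addresses the wrong variable and should be patched accordingly.
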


\begin{proof}
We first solve for $\eta$ and prove part \ref{i717}. Let $A_m : = {\bf H} Q_m(0)$, and $F: cU \times cU \to \RR$ be $$F(x, t) : = F_x(t) = ( \phi(x), 1 ) \cdot \eta(t).$$ Given $F(x, x)=0$ for $x \in cU$, $\nabla_t F(x, x)= 0$ is equivalent to $\nabla_x F(x, x)= 0$. Using the definition of $\phi$ in (\ref{eq581}), $\eta$ is a solution to the equation 
    \begin{equation*}
        \begin{pmatrix}
            \varphi(x)^T & Q_m(x) & 1 \\
            [{\bf J}\varphi(x)]^T & A_m x & 0
        \end{pmatrix} \eta(x) = 0,
    \end{equation*}
where the Jacobian ${\bf J}$ is defined in (\ref{eq814}). One solution is 
\begin{equation*}
    \eta(t) : = 
    \begin{pmatrix}
        -[{\bf J}\varphi(t)]^{-T} A_m t\\
        1 \\
        \varphi(t)^T[{\bf J}\varphi(t)]^{-T} A_m t -Q_m(t)
    \end{pmatrix},
\end{equation*}
where we use the notation $B^{-T} = (B^{-1})^{T}$ for an invertible matrix $B$. This choice of $\eta$ satisfies the requirement
$|\eta(t)| \geq 1$ and $F_x(x)=0$. We note that
\begin{equation}
\label{eq776}
    \begin{split}
    F(x, t) = & (\varphi(t)^T - \varphi(x)^T)[{\bf J}\varphi(t)]^{-T} A_m t + Q_m(x)-Q_m(t),\\
        \nabla_t F(x, t) = & w(x, t)+(\varphi(t)-\varphi(x))^T[{\bf J}\varphi(t)]^{-T}A_m,
    \end{split}
\end{equation}
where $w(x, t)\in \RR^{d-1}$, $(w(x, t))_j = (\varphi(t)-\varphi(x))^T \left[\frac{\partial}{\partial t_j} {\bf J}\varphi(t)\right]^{-T}A_m t$. Therefore,
$$\nabla_t F(x,x)=0, {\bf H}_t F(0, 0)=A_m.$$ 
The function $F_x$ satisfies the remaining properties in the list by the continuity of $F$ and the implicit function theorem if $c<1$ is chosen to be sufficiently small.

We can return to define the transformation $T_t$. First, we note that
\begin{equation*}
    \nabla_x F(x, t) = -t^T A_m [{\bf J} \varphi(t)]^{-1}[{\bf J} \varphi(x)]+x^T A_m,
\end{equation*}
and ${\bf H}_x F(0, 0) = A_m$. For $t\in cU$, let $$G(x, t) : =F(x+t, t).$$
We note that $G(0, t)=F(t,t)=0$, $\nabla_x G(0, t)=\nabla_x F(t, t) =0$, and ${\bf H}_x G(0, 0) = {\bf H}_x F(0, 0) = A_m$. Then, for each $t \in cU$, by Morse's lemma (Lemma \ref{lem32}), there exist $\widetilde{V}, \widetilde{W} \subset \RR^{d-1}$ that are neighborhoods of $0$, and a smooth $\tau: \widetilde{V} \times \widetilde{W} \to \RR^{d-1}$, such that $\tau(0, t)=0$, and for $x\in \widetilde{V}$, $$G(x, t)=Q_{m}(\tau(x, t)),$$ where $m$ is the number of positive eigenvalues of ${\bf H}_x G_t(0)= {\bf H}_x F(t, t)$. We note that $$G(x, 0)=F(x, 0)=Q_m(x)$$ by (\ref{eq776}), so $\tau(\cdot, 0)$ is the identity map on $\RR^{d-1}$. In addition, there exists a $c_{\tau}>0$, such that
$$|\tau(x, t)| = |\tau(x, t) - \tau(0, t)|\leq c_{\tau}|x|.$$

Then if $\{T_t\}$ exists and satisfies (\ref{eq6}),
$$Q_m(T_t(x)) = F(x, t)=G(x-t, t)=Q_m(\tau(x-t, t)),$$ and we can choose $T_t: cU \to U$ for $t \in cU$ as $$T_t(x) : = \tau(x-t, t),$$
which is well-defined if $c$ satisfies $2cU \subset \widetilde{V}$, $cU \subset \widetilde{W}$, and $2c<c_{\tau}^{-1}$.
    
Lastly, we write $\eta(t) = (\eta'(t), \widetilde{\rho}(t))$ for $\eta': cU \to \RR^{d}$ and $\widetilde{\rho}: cU \to \RR$. From the expression of the coordinate map $\widetilde{\Phi}$ in (\ref{eq998}), (\ref{eq1013}) holds if $\widetilde{\eta}: cU \to \RR^{d+1}$ is
    $\widetilde{\eta}(t) = (\eta'(t), 0)$.
\end{proof}

\section{A new question}
\label{sec_nq}
We propose a question related to the study of fractal sets.
\begin{conjecture}
     Let $A \subset \RR^{d}$ be a set with $\dim_F(A)=s \leq d$, not necessarily lying on a hypersurface, and 
     \begin{equation*}
         \begin{split}
             C_A := & \{h(x, 1) |x \in A, h \in \RR \} \subset \RR^{d+1} \\
             D_A := & S \times \RR = \{(x, h) |x \in A, h \in \RR \} \subset \RR^{d+1}
         \end{split}
     \end{equation*} be the cone and cylinder generated by $A$. Then $$\dim_F(C_A) = \dim_F(D_A) = \dim_F(A)=s.$$
\end{conjecture}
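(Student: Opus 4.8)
The plan is to prove the two inequalities $\dim_F(C_A), \dim_F(D_A) \geq s$ and $\dim_F(C_A), \dim_F(D_A) \leq s$ separately, mirroring the structure of the surface case but now without any smooth coordinate map to lean on. Throughout, write $\eta = (\eta', \eta_{d+1}) \in \RR^d \times \RR$ for a unit vector in $\RR^{d+1}$.

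\textbf{Lower bound.} For any $s_0 < s$ pick $\mu^{(A)} \in \mathcal{M}(A)$ with $|\widehat{\mu^{(A)}}(\xi)| \leq c_\mu |\xi|^{-s_0/2}$. For the cylinder, set $\nu^{(D_A)} = \mu^{(A)} \otimes (\psi\, dh)$ exactly as in the proof of Proposition \ref{p33}; the same two-case split (small $|\eta'|$ uses rapid decay of $\widehat{\psi}$ in the $e_{d+1}$ direction, large $|\eta'|$ uses the decay of $\widehat{\mu^{(A)}}(k\eta')$) gives $|\widehat{\nu^{(D_A)}}(k\eta)| \lesssim k^{-s_0/2}$, so $\dim_F(D_A) \geq s_0$; let $s_0 \to s$. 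For the cone, truncate $h$ to $[1,2]$ via $\psi$ and define $\nu^{(C_A)}$ by $\int f\, d\nu^{(C_A)} := \int \psi(h) \int f(h(y,1))\, d\mu^{(A)}(y)\, dh$ as in the proof of Proposition \ref{p31}; the only subtlety is that $A$ need not be bounded, but on the set $\{|\eta'| \leq c\}$ one instead notes the phase $h(y,1)\cdot k(\eta',\eta_{d+1})$ has no stationary behavior only after restricting to a bounded piece of $A$ — so first reduce $\mu^{(A)}$ to a compactly supported piece $A \cap B(0,R)$ (which carries positive mass and retains the Fourier decay after applying Lemma \ref{lem23}), and then the argument of Proposition \ref{p31} goes through verbatim.

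\textbf{Upper bound.} Suppose $\mu_0 \in \mathcal{M}(C_A)$ with $\sup_\xi |\xi|^{s'/2}|\widehat{\mu_0}(\xi)| < \infty$; we must show $s' \leq s$. By restricting $h$ to a dyadic annulus $[a,b]$ with $a > 0$ and applying Lemma \ref{lem23} we may assume $\mu$ is a probability measure supported on $\{h(x,1) : x \in A_0,\ h \in [a,b]\}$ for some bounded $A_0 \subset A$. Now use the translation re-parametrization $T_t(x) = x - t$ on the hyperplane slice as in Section \ref{ex21}, forming $\nu_t$ and then $\nu = \int \nu_t\, \psi(t)\, dt$ with $\psi \in C_0^\infty(\RR^d)$. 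The identity $(x - t)\cdot\xi$ is \emph{linear} in $x$, so pushing the $t$-translation onto the frequency variable is immediate: one computes $\widehat{\nu_t}(\xi) = \widehat{\mu}\big(\text{(a linear-in-}t\text{ reparametrized frequency)}\big)$, giving an upper bound $|\widehat{\nu}(\xi)| \lesssim$ (something) $\cdot \sup_t |\widehat{\mu}(\cdot)|$, hence decay at rate $s'/2$ in the relevant direction. The point is to then identify the direction where the averaging over $t$ \emph{forces} slow decay: project $A_0$ to the first coordinate, use that $\dim_F(\pi(A_0)) \leq \dim_F(A_0) \leq \dim_F(A) = s$ (projections do not increase Fourier dimension for the pushforward measure), and extract that $\widehat{\nu}$ along that direction must decay no faster than $k^{-s/2}$, forcing $s' \leq s$. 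The cylinder case is analogous with $T_t(x) = x - t$ on the $A$-factor and the extra $e^{-2\pi i k t^2}$-type phase handled exactly as in Proposition \ref{prop632}.

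\textbf{Main obstacle.} The genuine difficulty is the upper bound, and specifically the step that in the smooth case was supplied by stationary phase (Theorem \ref{t61}): there one \emph{produces} a matching lower bound $|\widehat{\nu}(ke_d)| \gtrsim k^{-(d-1)/2}$ from curvature, but here $A$ is an arbitrary set with no curvature and no asymptotic expansion available. The resolution has to be purely Fourier-analytic: one must show that the averaging construction $\nu = \int \nu_t \psi(t)\, dt$ cannot improve the Fourier decay of $\mu$ beyond the Fourier dimension of (a projection of) $A$ — i.e. that if $\widehat{\nu}$ decayed at rate $s'/2 > s/2$ uniformly, one could disintegrate $\nu$ and produce a measure on $\pi(A_0) \subset \RR$ (or on a slice) with Fourier decay exceeding $s/2$, contradicting $\dim_F(A) = s$. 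Making this disintegration rigorous, controlling the Jacobian factors coming from $T_t$, and ensuring the projected measure is nontrivial and genuinely supported on $A$, is where the real work lies; it is plausible this requires an additional regularity or Frostman-type hypothesis on $A$ (which is presumably why this is stated as a conjecture rather than a theorem).
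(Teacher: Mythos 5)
The statement you are addressing is stated in the paper as a conjecture: the paper offers no proof of it, only the remark that the lower bound $\dim_F(C_A), \dim_F(D_A) \geq s$ follows by modifying the arguments of Section \ref{sec4}. Your lower-bound paragraph is essentially that modification and is fine (restricting $\mu^{(A)}$ to a bounded piece of $A$ of positive mass via a smooth cutoff and Lemma \ref{lem23} handles the possible unboundedness of $A$, and then the two-case splits of Propositions \ref{p31} and \ref{p33} go through). So the part of your proposal that is correct recovers only what the paper already claims in its remark.

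The upper bound contains a genuine gap, and the failing steps are concrete. First, the inequality you invoke, $\dim_F(\pi(A_0)) \leq \dim_F(A_0)$, is false: Fourier dimension can increase under projection. For instance $A_0 = [0,1]\times\{0\}\subset\RR^2$ has $\dim_F(A_0)=0$, since no measure on it can decay along the $e_2$ direction, while $\pi(A_0)=[0,1]$ has Fourier dimension $1$ in $\RR$; so the projection step cannot deliver the contradiction you want. Second, and more fundamentally, the mechanism that closes the argument in the smooth case is the stationary-phase lower bound $|\widehat{\nu}(ke_d)|\geq c_0 k^{-\frac{d-1}{2}}$ (Lemmas \ref{lem54} and \ref{lem1065}), which rests on the curvature identities $Q_m(T_t(x)) = (\phi(x),1)\cdot\eta(t)$ obtained through Morse's lemma; for an arbitrary set $A$ there is no coordinate map, no quadratic normal form, and hence no identified direction along which the averaged measure $\nu=\int\nu_t\,\psi(t)\,dt$ is forced to decay slowly. (Note also that for general $A$ the translated measures $\nu_t$ are supported on the cone over $A_0-t$, not on $C_A$, so the reparametrization device itself has no analogue; this is not fatal by itself, since only the Fourier transform of $\nu$ is used, but it signals that the surface argument does not transfer.) Your proposed substitute --- disintegrating $\nu$ to produce a measure on a slice or projection of $A$ with decay faster than $k^{-s/2}$ --- is described but not executed, and as written it leans on the false monotonicity above. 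You correctly flag this as the ``real work,'' but that work is exactly the content of the open question: what you have is the lower bound together with a heuristic for the upper bound, not a proof of the conjecture.
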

\begin{remark}
    By modifying the proof in section \ref{sec4}, we obtain $\dim_F(C_A), \dim_F(D_A) \geq s$. In addition, $s \leq \dim_H(A) \leq d$, so $s+1 \leq \dim_H(C_A), \dim_H(D_A) \leq d+1$.
\end{remark}

\section{Appendix}
\label{sec_app}

\subsection{Reduction to compactly supported measure}

We show a lemma that allows us to assume the measure $\mu \in \mathcal{M}(M)$ we study has smaller support in a smaller closed set on the manifold $M$.

\begin{lemma}\cite[Theorem 1]{eps}
\label{lem23}
     Suppose $\mu_0 \in \mathcal{M}(\RR^n)$, $\sup_{\xi \in \RR^{n}}|\xi|^{\alpha}|\widehat{\mu_0}(\xi)| < \infty$ for $\alpha >0$. Let $f \in \mathcal{S}(\RR^{n})$ with $f \geq 0$, and $\mu \in \mathcal{M}(\RR^n)$ such that $d\mu = fd\mu_0$. Then  $|\widehat{\mu}(\xi)| \leq c_{\mu} |\xi|^{-\alpha}$ for a $c_{\mu}>0$.
\end{lemma}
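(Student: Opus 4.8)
The plan is to express $\widehat{\mu}$ as a convolution and then split the frequency space into a near region and a far region relative to the frequency $\xi$ at which we estimate. Since $d\mu = f\, d\mu_0$ with $f \in \mathcal{S}(\RR^n)$, we have $\widehat{\mu} = \widehat{f} * \widehat{\mu_0}$, i.e.
\begin{equation*}
    \widehat{\mu}(\xi) = \int_{\RR^n} \widehat{f}(\xi - \zeta)\, \widehat{\mu_0}(\zeta)\, d\zeta.
\end{equation*}
The hypothesis gives $|\widehat{\mu_0}(\zeta)| \le A |\zeta|^{-\alpha}$ for $|\zeta| > 0$, where $A := \sup_{\zeta} |\zeta|^\alpha |\widehat{\mu_0}(\zeta)|$; note also $|\widehat{\mu_0}(\zeta)| \le \norm{\mu_0}_1$ everywhere, so in fact $|\widehat{\mu_0}(\zeta)| \le C \min(1, |\zeta|^{-\alpha})$ for a constant $C$ depending on $\mu_0$ and $\alpha$. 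Because $f$ is Schwartz, $\widehat{f}$ is Schwartz, so for each $N$ there is $C_N$ with $|\widehat{f}(\omega)| \le C_N (1 + |\omega|)^{-N}$.

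Next I would fix $\xi$ with $|\xi| \ge 1$ (the region $|\xi| \le 1$ is trivial since $|\widehat{\mu}(\xi)| \le \norm{\mu}_1 \le \norm{f}_\infty \norm{\mu_0}_1 \lesssim |\xi|^{-\alpha}$ there) and split the integral at $|\zeta| = |\xi|/2$. On the far region $|\zeta| \ge |\xi|/2$ we bound $|\widehat{\mu_0}(\zeta)| \le C (|\xi|/2)^{-\alpha} = C' |\xi|^{-\alpha}$ and use $\int |\widehat{f}(\xi - \zeta)|\, d\zeta \le \norm{\widehat{f}}_1 < \infty$ to get a contribution $\lesssim |\xi|^{-\alpha}$. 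On the near region $|\zeta| < |\xi|/2$ we have $|\xi - \zeta| \ge |\xi|/2$, so $|\widehat{f}(\xi - \zeta)| \le C_N (|\xi|/2)^{-N} \le C_N' |\xi|^{-N}$; combining with $\int_{\RR^n} |\widehat{\mu_0}(\zeta)|\, d\zeta$ — which converges at infinity because of the decay $|\zeta|^{-\alpha}$ only when $\alpha > n$, so instead I would bound this piece by $\norm{\mu_0}_1 \cdot C_N' |\xi|^{-N}$ using $|\widehat{\mu_0}| \le \norm{\mu_0}_1$ and the fact that the near region has volume $\lesssim |\xi|^n$, giving $\lesssim |\xi|^{n-N}$, which is $\le |\xi|^{-\alpha}$ once $N \ge n + \alpha$.

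Summing the two regions yields $|\widehat{\mu}(\xi)| \le c_\mu |\xi|^{-\alpha}$ for $|\xi| \ge 1$, with $c_\mu$ depending on $\norm{\widehat{f}}_1$, the Schwartz seminorms of $\widehat{f}$ up to order $N = \lceil n + \alpha \rceil$, $\norm{\mu_0}_1$, and $A$; together with the trivial bound for $|\xi| \le 1$ this proves the claim. The only mild subtlety — the ``main obstacle,'' such as it is — is that $\widehat{\mu_0}$ is not integrable in general, so one cannot naively pull $\int |\widehat{\mu_0}|$ out of the near-region estimate; the fix is to absorb that region's smallness entirely into the rapid decay of $\widehat{f}$ at the cost of one extra power of $|\xi|^n$ from the volume, which the Schwartz decay of $\widehat f$ easily accommodates.
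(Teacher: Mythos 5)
Your proposal is correct and follows essentially the same argument as the paper: write $\widehat{\mu}=\widehat{f}*\widehat{\mu_0}$, split the convolution integral at radius $|\xi|/2$, use the decay $|\widehat{\mu_0}|\lesssim|\cdot|^{-\alpha}$ where its argument is large, and use the rapid (Schwartz) decay of $\widehat{f}$ on the complementary region. The only cosmetic differences are that you split on the argument of $\widehat{\mu_0}$ rather than of $\widehat{f}$ and bound the second piece by pointwise decay of $\widehat{f}$ times the ball's volume instead of the integrated tail $\int_{|\eta|\geq|\xi|/2}|\widehat{f}|\lesssim|\xi|^{-m}$, which changes nothing of substance.
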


\begin{proof}
    Note that
    \begin{equation*}
    \begin{split}
        |\widehat{\mu}(\xi)| & = |\widehat{\mu_0} * \widehat{f}(\xi)| \\
        & = \left|\int \widehat{\mu_0}(\xi - \eta) \widehat{f}(\eta) d\eta  \right| \\
        & \leq \left|\int_{|\eta| \leq \frac{|\xi|}{2}} \widehat{\mu_0}(\xi - \eta) \widehat{f}(\eta) d\eta  \right| + \left|\int_{|\eta| \geq \frac{|\xi|}{2}} \widehat{\mu_0}(\xi - \eta) \widehat{f}(\eta) d\eta  \right|.
    \end{split}
    \end{equation*}

    For the first integral, since $|\eta| \leq \frac{|\xi|}{2}$, $|\xi-\eta| \geq \frac{|\xi|}{2}$, and $\widehat{f}$ is integrable, the integral is bounded above by a constant multiple of $|\xi|^{-\alpha}$. For the second integral, we apply the bounds $\norm{\widehat{\mu_0}}_{L^{\infty}}=1$ and $\int_{|\eta| \geq \frac{|\xi|}{2}} |\widehat{f}(\eta)| d\eta \leq c_m |\xi|^{-m}$ for $m \in \NN$ with a $c_m > 0$ since $\widehat{f} \in \mathcal{S}(\RR^n)$. If we choose $m > \alpha$, the sum of two bounds is bounded by a constant multiple of $|\xi|^{-\alpha}$ for large $|\xi|$.
\end{proof}

\subsection{Gaussian curvature and Morse's lemma}
\label{sec62}
In differential geometry, a way to describe $M$ is through notions of curvatures, which is defined through the eigenvalues of the \textit{Weingarten map} \cite{edg}. 
Let $T_pM$ be the tangent space of $M$ at $p \in M$. The \textit{Weingarten map} $L_p: T_p M \to T_p M$ at $p \in M$ is the linear map $L_p(v) = -D_v N = -\frac{d}{dt} (N \circ \gamma) (0)$, where $\gamma: I \to M$ is a curve with $\gamma(0)=p$, $\gamma'(0)=v$. The \textit{principal curvatures} of $M$ at $p$ are the eigenvalues of the map $L_p$, and the \textit{Gaussian curvature} of $M$ at $p$ is the product of the eigenvalues, which equals the determinant of $L_p$. We note that Gaussian and principal curvatures at $p \in M$ are independent of the parametrization of $M$ and the choice of a basis for $T_p M$. 

The following lemma provides a way to compute the Gaussian curvature and principal curvatures at points $p \in M$ when $M$ is the graph of a function $g: \RR^{d-1} \to \RR$ with $g(0)=0$, $\nabla g(0)=0$.
\begin{lemma}[{\cite[Section 8.3]{Stein1993}}]
\label{lem63}
Let $M$ be an open subset of a smooth $d-1$ dimensional submanifold of $\RR^d$. For $p \in M$, by rotation and translation, $p$ is moved to the origin, and the tangent plane of $M$ at $p$ becomes the hyperplane $x_d = 0$. Near the origin, the surface $M$ can be given as a graph of $$x_d = g(x'),$$ where $x' \in \RR^{d-1}$, with $g \in C^{\infty}(V)$ for open $V\subset \RR^{d-1}$ that contains $0$, and $\phi(0) = 0$, $\nabla \phi(0) = 0$. The \textit{principal curvatures} of the point $p \in M$ are the eigenvalues of the matrix 
${\bf H}g(0).$
The \textit{Gaussian curvature} is the product of the eigenvalues, which equals the determinant of the matrix above.
\end{lemma}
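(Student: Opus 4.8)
The plan is to deduce the statement from the intrinsic definition of the Weingarten map $L_p$ recalled above, together with a direct computation of $L_0$ for a graph. First I would justify the reduction to the asserted normal form. The Weingarten map transforms equivariantly under a rigid motion $x \mapsto Rx + v$ of $\RR^d$: if $\widetilde M = RM + v$ with oriented normal $\widetilde N = RN$, then $D_{Rw}\widetilde N = R\,D_w N$, so $L_p$ is merely conjugated by $R$ once we identify $T_pM$ with $T_{Rp+v}\widetilde M$ via $R$. Hence its eigenvalues (the principal curvatures) and its determinant (the Gaussian curvature) are unchanged by rotations and translations, as already noted in the text, and we may assume $p = 0$ and $T_0M = \{x_d = 0\}$. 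Applying the implicit function theorem to a local defining function of $M$, the surface is near $0$ the graph $\{(x', g(x')) : x' \in V\}$ of a smooth $g$ with $g(0) = 0$; since $T_0M$ is spanned by the coordinate tangent vectors $(e_j, \partial_j g(0))$ and equals $\{x_d = 0\}$, we get $\nabla g(0) = 0$. This matches the hypotheses of the statement.

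Next I would compute $L_0$ in the basis $\{(e_j, 0)\}_{j=1}^{d-1}$ of $T_0M$. Parametrize $M$ near $0$ by $\Psi(x') := (x', g(x'))$, so $\partial_j\Psi = (e_j, \partial_j g)$ and, at $x' = 0$, $\partial_j\Psi(0) = (e_j,0)$ — an \emph{orthonormal} basis of $T_0M$, the orthonormality being exactly where $\nabla g(0) = 0$ enters. A smooth unit normal along $M$ is
\begin{equation*}
    N(x') = \bigl(1 + |\nabla g(x')|^2\bigr)^{-1/2}\,\bigl(-\nabla g(x'),\, 1\bigr).
\end{equation*}
Differentiating in $x'_k$ and using $\nabla g(0) = 0$, the derivative of the scalar prefactor vanishes at $0$, so $\partial_k N(0) = \bigl(-\partial_k\nabla g(0),\, 0\bigr) = -\sum_{j=1}^{d-1} \tfrac{\partial^2 g}{\partial x_j \partial x_k}(0)\,(e_j,0)$. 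Since $L_0\bigl(\partial_k\Psi(0)\bigr) = -D_{\partial_k\Psi(0)}N = -\partial_k N(0)$, the matrix of $L_0$ with respect to the orthonormal basis $\{\partial_j\Psi(0)\}$ is precisely the symmetric Hessian ${\bf H}g(0)$ defined in (\ref{eq796}).

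Finally I would conclude: by definition the principal curvatures at $p = 0$ are the eigenvalues of $L_0$, hence the eigenvalues of ${\bf H}g(0)$, and because the basis used is orthonormal these agree with the basis-free notion recalled in the text; the Gaussian curvature is $\det L_0 = \det {\bf H}g(0)$, which equals the product of the principal curvatures since ${\bf H}g(0)$ is symmetric. I do not expect a serious obstacle — this is a standard computation — but the points requiring care are the rigid-motion invariance of $L_p$ (so that reducing to the graph normal form is legitimate) and the fact that $\nabla g(0) = 0$ is used twice: once to land in the normal form, and once to ensure that at the origin the first fundamental form is the identity, so that the shape operator equals the Hessian rather than ${\bf I}^{-1}{\bf II}$ with a nontrivial metric factor ${\bf I}$.
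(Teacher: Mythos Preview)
The paper does not supply its own proof of this lemma; it is simply quoted from \cite[Section 8.3]{Stein1993} without argument. Your proposal is the standard and correct proof: rigid-motion invariance of the Weingarten map justifies the normal form, and the direct computation of $L_0$ in the orthonormal basis $\{(e_j,0)\}$ (with $\nabla g(0)=0$ making the first fundamental form the identity at the origin) identifies the shape operator with ${\bf H}g(0)$.
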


Morse's lemma is a tool to study hypersurfaces with non-vanishing Gaussian curvature. The version stated below generalizes the one shown in \cite[Section 8.2]{Stein1993}.

\begin{lemma}
\label{lem32}
Suppose $f \in C^{\infty}(\RR^d \times \RR^{d})$, and for each $t \in \RR^{d}$, $f_t \in C^{\infty}(\RR^d)$ given by $f_t(x) = f(x, t)$ has a non-degenerate critical point at $x=0$, which means $\nabla_x f(0, t) = 0$ and ${\bf H}_xf(0, t)$ is invertible. We further assume that $f(0, t)=0$. Then there exist neighbourhoods $V, W$ of $0$ and a smooth $\tau: V \times W \to \RR^{d}$ such that $\tau(0, t)=0$, $\det {\bf J}_x \tau(x, t) \neq 0$, and
\begin{equation}
\label{eq803}
    f(x, t) = Q_m(\tau(x, t)),
\end{equation}
where $Q_m$ is defined in (\ref{eq764}) with $0 \leq m \leq d$.
\end{lemma}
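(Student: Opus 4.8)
The plan is to adapt the classical proof of Morse's lemma, carrying the parameter $t$ through every step, so that the construction of $\tau$ reduces to a \emph{smooth} factorization of a family of nondegenerate symmetric matrices.

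First I would apply Taylor's theorem with integral remainder to $s\mapsto f(sx,t)$ at $s=0$. Since $f(0,t)=0$ and $\nabla_x f(0,t)=0$, this gives
\[
f(x,t)=\int_0^1 (1-s)\,x^{T}\,{\bf H}_x f(sx,t)\,x\,ds = x^{T}H(x,t)\,x,\qquad H(x,t):=\int_0^1 (1-s)\,{\bf H}_x f(sx,t)\,ds,
\]
where $H(x,t)$ is symmetric, depends smoothly on $(x,t)$, and $H(0,t)=\tfrac12\,{\bf H}_x f(0,t)$, which is invertible by hypothesis. Taking $W$ connected, the number $m$ of positive eigenvalues of the invertible symmetric matrix $H(0,t)$ is locally constant in $t$, hence constant on $W$, and equals the number of positive eigenvalues of ${\bf H}_x f(0,0)$; this is the integer $m$ in the statement.

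Next I would build a smoothly varying diagonalization. Let $D_m$ be the diagonal matrix with $m$ entries $+1$ and $d-m$ entries $-1$, so that $Q_m(y)=y^{T}D_m y$. By Sylvester's law of inertia there is an invertible $B_0$ with $B_0^{T}D_m B_0=H(0,0)$. Consider the smooth map $\Psi\colon \mathrm{GL}_d(\RR)\to \mathrm{Sym}_d(\RR)$, $\Psi(B)=B^{T}D_m B$; its differential at $B_0$ sends $X$ to $X^{T}D_m B_0+B_0^{T}D_m X$, and for any symmetric $C$ the matrix $X=\tfrac12 D_m^{-1}B_0^{-T}C$ satisfies $\Psi'(B_0)X=C$, so $\Psi$ is a submersion at $B_0$. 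By the implicit function theorem there are a neighborhood $\mathcal O$ of $H(0,0)$ in $\mathrm{Sym}_d(\RR)$ and a smooth map $\sigma\colon \mathcal O\to \mathrm{GL}_d(\RR)$ with $\sigma(H(0,0))=B_0$ and $\sigma(P)^{T}D_m\sigma(P)=P$ for all $P\in\mathcal O$. Shrinking $V,W$ so that $H(V\times W)\subset\mathcal O$, I would set $\tau(x,t):=\sigma(H(x,t))\,x$. This $\tau$ is smooth, $\tau(0,t)=0$, and $Q_m(\tau(x,t))=x^{T}\sigma(H(x,t))^{T}D_m\sigma(H(x,t))\,x=x^{T}H(x,t)\,x=f(x,t)$; finally ${\bf J}_x\tau(0,t)=\sigma(H(0,t))$ is invertible, so after one further shrinking of $V,W$ we get $\det{\bf J}_x\tau(x,t)\neq 0$ throughout, which gives (\ref{eq803}).

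The main obstacle is precisely the diagonalization step: one needs the factorization $B^{T}D_m B=H$ to depend \emph{smoothly} on $H$ near the nondegenerate locus, not merely to exist pointwise, and one must rule out the inertia index $m$ jumping as $t$ varies. Both points are handled by the submersion argument above together with the connectedness of $W$; the remaining steps are routine bookkeeping with Taylor's formula and continuity of $H$. An alternative would be Milnor's inductive ``complete the square'' proof, but threading the parameter $t$ through that induction is more cumbersome than the single application of the implicit function theorem used here.
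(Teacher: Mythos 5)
Your argument is correct, but it takes a genuinely different route from the paper. The paper follows the Stein-style inductive proof: after the integral-remainder Taylor step it inserts an orthogonal rotation $O_r$ at each stage so that the pivot coefficient $f^{(r)}_{r,r}(0,t)$ is nonzero, completes the square in one variable at a time, and assembles $\tau = L \circ \tau_d \circ \cdots \circ \tau_1$ as a composition of $d$ explicit coordinate changes depending on $t$. You instead stop after the first Taylor step, writing $f(x,t) = x^{T}H(x,t)x$ with $H$ smooth, symmetric, and invertible at $x=0$, and produce the change of variables in one stroke from a smooth local section $\sigma$ of the submersion $B \mapsto B^{T}D_m B$ on $\mathrm{GL}_d(\RR)$, setting $\tau(x,t)=\sigma(H(x,t))\,x$; your computation of the differential, the constancy of the inertia index on a connected $W$, and the shrinking argument for $\det {\bf J}_x\tau(x,t)\neq 0$ are all sound, so the lemma as stated is proved. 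As for what each approach buys: yours is shorter and the parameter $t$ rides along for free, and since $\sigma$ is a fixed smooth map on a fixed neighborhood of $H(0,0)$, quantitative bounds on $\tau$ in terms of $\norm{f_t}_{C^{k+2}}$ are still available. The paper's explicit composition is what it exploits in the remark immediately after its proof (bounding $\norm{{\bf J}\tau_t}_{C^{k}}$ and $\inf\{|\det {\bf J}\tau_t(x)| : x \in V\}$, which enter the stationary-phase error constants of Theorem \ref{t61}), and in Lemma \ref{lem708} it additionally uses that $\tau(\cdot,0)$ is the identity when $f(\cdot,0)=Q_m$; with your construction this extra property is not automatic, but it is recovered by choosing the base point $B_0=I$ (note that $H(x,0)\equiv D_m$ in that situation, so $\tau(x,0)=\sigma(D_m)x=x$), so the downstream applications go through as well.
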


\begin{remark}
    The number of positive eigenvalues of the matrix ${\bf H}_xf(0, 0)$ is $m$. By differentiating (\ref{eq803}) twice,
\begin{equation}
    \label{eq828}
     [{\bf J}_x\tau(0, t)]^T{\bf H}Q_m(0)  {\bf J}_x\tau(0, t) = {\bf H}_x f(0, t).
\end{equation}
\end{remark}

\begin{proof}
We claim that the function $\tau$ can be expressed as
\begin{equation}
\label{mor869}
    \tau = L \circ \tau_d \circ \cdots \circ \tau_{1},
\end{equation}
where each $\tau_r$ is a change of variables in the first $d$ coordinates, and $L$ is a permutation of the same coordinates. $\tau_r$ is constructed inductively as follows: suppose at step $r$, we have
$$\widetilde{g_r}(x^{(r)}, t) := f(\tau_{1}^{-1}\circ \cdots \circ \tau_{r-1}^{-1}(x^{(r)}, t)) = \pm [x^{(r)}_1]^2 \pm \cdots \pm [x^{(r)}_{r-1}]^2+ \sum_{j, k \geq r}^{d} x^{(r)}_j x^{(r)}_k \widetilde{f^{(r)}_{j, k}}(x^{(r)}, t),$$
where $$\widetilde{f^{(r)}_{j, k}}(x^{(r)}, t)= \int_{0}^{1}(1-s) \frac{\partial^2 \widetilde{g_r}}{\partial x^{(r)}_j\partial x^{(r)}_k}(sx^{(r)}, t)ds, \widetilde{f^{(r)}_{j, k}}(0, t) = \frac{1}{2} \frac{\partial^2  \widetilde{g_r}}{\partial x^{(r)}_j \partial x^{(r)}_k}(0, t).$$ Then there exists an orthonormal matrix $O_r$, which is a linear change in the variables $x^{(r)}_r, \cdots, x^{(r)}_d$, such that 
$$g_r(y^{(r)},t) :=  f(\tau_{1}^{-1}\circ \cdots \circ \tau_{r-1}^{-1}(O_r y^{(r)}, t)) = \pm [y^{(r)}_1]^2 \pm \cdots \pm [y^{(r)}_{r-1}]^2+ \sum_{j, k \geq r}^{d} y^{(r)}_j y^{(r)}_k f^{(r)}_{j, k}(y^{(r)}, t),$$
where $$f^{(r)}_{j, k}(y^{(r)}, t)= \int_{0}^{1}(1-s) \frac{\partial^2 g_r}{\partial y^{(r)}_j\partial y^{(r)}_k}(sy^{(r)}, t)ds, f^{(r)}_{j, k}(0, t) = \frac{1}{2} \frac{\partial^2  g_r}{\partial y^{(r)}_j \partial y^{(r)}_k}(0, t)$$ and the additional condition that $f^{(r)}_{r, r}(0, t)  \neq 0$. Then, for each $t$, we can perform a change of variables from $y^{(r)}$ to $z^{(r)}$ such that $z^{(r)}_j = y^{(r)}_j$ for $j \neq r$, and
\begin{equation}
\label{mor875}
    z^{(r)}_r = [\pm f^{(r)}_{r,r}(y^{(r)}, t)]^{\frac{1}{2}}\left[y^{(r)}_r + \sum_{j > r} \frac{y^{(r)}_j
f^{(r)}_{j, r}(y^{(r)}, t)}{\pm f^{(r)}_{r, r}(y^{(r)}, t) } \right],
\end{equation}
where $\pm$ is the sign of $f^{(r)}_{r, r}(0, t)$. This change of variables can be expressed as $z^{(r)}=\sigma_{r}(y^{(r)}, t)$ for $\sigma_r: V_r \times W_r \to \RR^{d}$, where $V_r, W_r$ are neighborhoods of $0$, $ f^{(r)}_{r,r}$ does not change sign on $V_r \times W_r$, and $$\det {\bf J}_{y^{(r)}} \sigma_{r}(y^{(r)}, t) \neq 0.$$ Then we let 
\begin{equation}
    \label{mor884}
    \tau_r(x^{(r)}, t)= (\sigma_{r}(O_r^{-1} x^{(r)}, t), t),
\end{equation}
and proceed to the next step by noting $\tau_r^{-1}$ exists in a neighborhood of $0$ and taking $x^{(r+1)}=z^{(r)}$.

From the construction above, there exist $V, W$  neighbourhoods of $0$, such that for $(x, t) = (x^{(1)}, t) \in V \times W$, for all $r$ from $1$ to $d$,  $f^{(r)}_{r,r}(y^{(r)}, t)$ does not change sign, and $$\det {\bf J}_{y^{(r)}} \sigma_{r}(y^{(r)}, t) \neq 0.$$ So $\tau$ is defined on $V \times W$. 
\end{proof}

Let $\tau_t(x) = \tau(x, t)$. For $k \geq 0$, $t \in W$, it is possible to bound $\norm{{\bf J}\tau_t}_{C^{k}(V)}$ and $\inf\{|\det {\bf J  }\tau_t(x)| x \in V \}$ using $\norm{f_t}_{C^{k+2}}$, $|\det {\bf J}\tau_t(0)|$, and the size of $V$ using (\ref{mor869}), (\ref{mor875}), and (\ref{mor884}).

\subsection{Oscillatory Integrals}

We refer readers to the complete proof of oscillatory integrals results in \cite{Stein1993}. We outline the key steps and bounds on the error terms in this section.

For two functions $f, g: D \to \RR_{\geq 0}$ with a domain $D$, we write $f \lesssim g$ to denote that there exists a $c>0$, such that for $x\in D$, $f(x) \leq c g(x)$.

\begin{lemma}\cite[Chapter 8, Proposition 6]{Stein1993} Let $Q_m(y)$ be defined in (\ref{eq764}).
    \begin{enumerate}[a.]
    \item  If $\eta \in C^{\infty}_{0}(\RR^{n})$,
        \begin{equation}
        \label{eq32}
            \left|\int_{\RR^{n}} e^{i \lambda Q_m(y)} y^{l} \eta(y) dy \right| \leq A_l \lambda^{-\frac{n+|l|}{2}},
        \end{equation}
        where $A_l>0$, $l \in \ZZ^{n}$, $l_j \geq 0$, and $|l| = \sum_{j=1}^{n} l_j$.
    \item  If $g \in \mathcal{S}(\RR^{n})$ and there exists $\delta > 0$, such that $g(y)=0$ for $y \in B(0, \delta)$, then for $N \in \NN$, there exists $B_N > 0$ such that
        \begin{equation}
        \label{eq836}
            \left|\int_{\RR^{n}} e^{i \lambda Q_m(y)} g(y) dy \right| \leq B_N \lambda^{-N}.
        \end{equation}
    \end{enumerate}
\end{lemma}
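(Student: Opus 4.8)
The plan is to dispatch part (b) first as a routine non-stationary phase estimate, and then concentrate on part (a), where the sharp exponent $-(n+|l|)/2$ — as opposed to the $-n/2$ that the dimension alone would suggest — is the substantive point.

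\emph{Part (b).} Since $g$ vanishes on $B(0,\delta)$, on $\text{spt}\,g$ we have $|\nabla Q_m(y)| = 2|y| \ge 2\delta > 0$, so the phase has no critical point there. I would introduce the first-order operator
\[
    L := \frac{1}{i\lambda}\,\frac{\nabla Q_m}{|\nabla Q_m|^2}\cdot\nabla,
\]
which satisfies $L\,e^{i\lambda Q_m} = e^{i\lambda Q_m}$ on $\{|y|\ge\delta\}$, and integrate by parts $N$ times. The transpose $L^{t}$ gains a factor $\lambda^{-1}$ at each application, and on $\{|y|\ge\delta\}$ its coefficients (essentially $\pm y_j/(2|y|^2)$) and all their derivatives are bounded by powers of $\delta^{-1}$; since $g\in\mathcal S(\RR^n)$ there are no boundary terms, and $(L^{t})^{N}g = \lambda^{-N}\times(\text{finite sum of }\partial^{\beta}g\text{ times bounded coefficients})$ is $\lambda^{-N}$ times an $L^1$ function. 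Hence $\left|\int e^{i\lambda Q_m}g\,dy\right|\le B_N\lambda^{-N}$, with $B_N$ depending only on $N$, $\delta$, and finitely many Schwartz seminorms of $g$.

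\emph{Part (a).} The key move is the isotropic rescaling $y = \lambda^{-1/2}u$: since $Q_m(\lambda^{-1/2}u)=\lambda^{-1}Q_m(u)$,
\[
    \int_{\RR^n} e^{i\lambda Q_m(y)}\,y^{l}\,\eta(y)\,dy = \lambda^{-\frac{n+|l|}{2}}\int_{\RR^n} e^{iQ_m(u)}\,u^{l}\,\eta(\lambda^{-1/2}u)\,du,
\]
so it suffices to bound the $u$-integral uniformly for $\lambda\ge1$ (the range $0<\lambda<1$ being trivial, since the left side is $\le\|y^l\eta\|_{L^1}$). I would split the $u$-integral with a cutoff $\chi\in C^\infty_0(B(0,1))$ equal to $1$ near $0$: on $\text{spt}\,\chi$ the integrand is dominated by $|u|^{|l|}\|\eta\|_\infty$, integrable on $B(0,1)$ uniformly in $\lambda$, while on $\text{spt}(1-\chi)\subset\{|u|\ge1/2\}$ the phase $Q_m$ is again non-stationary, so integrating by parts $N=n+|l|+1$ times with $L=\frac1i\frac{\nabla Q_m}{|\nabla Q_m|^2}\cdot\nabla$ produces an amplitude bounded by $C_N|u|^{|l|-N}$ for $|u|\ge1$ — the derivatives hitting $\eta(\lambda^{-1/2}u)$ only yield harmless factors $\lambda^{-|\beta|/2}\le1$ — which is integrable on $\RR^n$ precisely because $N>n+|l|$. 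This gives the uniform $O(1)$ bound and hence the claim. (Alternatively, one may compute $\widehat{e^{i\lambda Q_m(\cdot)}}(\xi) = c_{m,n}\lambda^{-n/2}e^{-i\pi^2 Q_m(\xi)/\lambda}$, apply Plancherel, Taylor-expand the exponential in powers of $\lambda^{-1}$, and note that the moments $\int\xi^{\alpha}\,\widehat{y^{l}\eta}(\xi)\,d\xi$ vanish for $|\alpha|<|l|$, so the expansion begins at order $\lceil|l|/2\rceil$ in $\lambda^{-1}$.)

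The main obstacle is exactly this gain of $\lambda^{-|l|/2}$ in part (a): one must exploit the $|l|$-fold vanishing of $y^l\eta$ at the origin, and the rescaling $y\mapsto\lambda^{-1/2}u$ is what converts that vanishing into a clean, $\lambda$-independent oscillatory integral. The remaining work is bookkeeping — checking that the integration-by-parts amplitudes decay fast enough to be integrable and that all constants depend only on $n$, $l$ (respectively $N$, $\delta$) and finitely many seminorms of the amplitude.
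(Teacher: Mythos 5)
Your proposal is correct, but it takes a genuinely different route from the paper. For part (a) you rescale $y=\lambda^{-1/2}u$, which removes $\lambda$ from the phase and reduces the claim to a bound, uniform in $\lambda\ge 1$, for a fixed oscillatory integral; you then split with a cutoff at unit scale and apply non-stationary phase with the full-gradient operator $L=\frac{1}{i}\frac{\nabla Q_m}{|\nabla Q_m|^2}\cdot\nabla$, which is legitimate since $|\nabla Q_m(u)|=2|u|$ is bounded below away from the origin. Your homogeneity bookkeeping checks out: after $N$ integrations by parts the coefficients multiplying $\partial^{\beta}$ of the amplitude are homogeneous of degree $|\beta|-2N$, derivatives falling on $\eta(\lambda^{-1/2}\cdot)$ only cost factors $\lambda^{-1/2}\le 1$, and the resulting amplitude is $O(|u|^{|l|-N})$, integrable once $N>n+|l|$. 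The paper instead follows Stein's argument: it covers $\RR^n$ by cones $\Gamma_k$ with a homogeneous partition of unity $\Omega_k$, splits at a scale $\epsilon$ with a cutoff $\alpha(\epsilon^{-1}y)$, integrates by parts $N$ times in the single dominant coordinate $y_k$ via the one-dimensional operator ${}^tD_k$, and only at the end optimizes $\epsilon=\lambda^{-1/2}$ --- the same scale your substitution builds in from the outset. Likewise in part (b) you use the gradient operator directly on $\{|y|\ge\delta\}$, whereas the paper reuses the conic decomposition with $\epsilon=\delta/3$ fixed. Your route avoids the conic decomposition and the homogeneous cutoffs entirely and is arguably cleaner; the paper's version stays closer to the source it cites and records the constants in exactly the form ($A_l$ controlled by $\norm{\eta}_{C^N}$, $B_N$ by finitely many seminorms of $g$) that is invoked later in the stationary-phase theorem, but your argument produces the same dependence on finitely many derivatives of the amplitude, so nothing needed downstream is lost. (Your parenthetical Plancherel alternative is only a sketch, but the main argument is complete without it.)
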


\begin{proof}
\begin{enumerate}[a.]
    \item Consider the cones 
    $$\Gamma_k = \left\{y \in \RR^{n}| |y_k|^2 \geq \frac{|y|^2}{2n}  \right\},$$
    and  $$\Gamma_k^{0} = \left\{y \in \RR^{n}| |y_k|^2 \geq \frac{|y|^2}{n}  \right\}.$$
    Since $\cup_{k=1}^{n} \Gamma_k^{0} = \RR^{n}$, there are functions $\{\Omega_k\}_{1 \leq k \leq n}$ such that each $\Omega_k$ is homogeneous of degree $0$, smooth away from the origin, $0 \leq \Omega_k \leq 1$ with $$\sum_{k=1}^{n} \Omega_k(x) = 1$$ for $x\neq 0$, and each $\Omega_k$ is supported in $\Gamma_k$. Then
    \begin{equation*}
        \int_{\RR^{n}} e^{i \lambda Q_m(y)} y^{l} \eta(y) dy = \sum_{k=1}^{n} \int_{\Gamma_{k}} e^{i \lambda Q_m(y)} y^{l} \eta(y) \Omega_k(y) dy.
    \end{equation*}
    
     In the cone $\Gamma_k$, we will show that there exists $A_{l, k}>0$ such that 
     \begin{equation}
     \label{eq797}
       \left| \int_{\Gamma_{k}} e^{i \lambda Q_m(y)} y^{l} \eta(y) \Omega_k(y) dy\right| \leq A_{l, k} \lambda^{-\frac{n+|l|}{2}}.  
     \end{equation}
     Then summing over all $k$ yields (\ref{eq32}). Let $\alpha \in C^{\infty}(\RR^n)$, such that $\alpha(y) = 1$ for $|y| \leq 1$, and $\alpha(y)=0$ for $|y| \geq 2$. Then for $\epsilon>0$,
        \begin{equation*}
            \begin{split}
            & \int_{\Gamma_{k}} e^{i \lambda Q_m(y)} y^{l} \eta(y) \Omega_k(y) dy  \\
            = & \int_{\Gamma_{k}} e^{i \lambda Q_m(y)} y^{l} \eta(y) \Omega_k(y) \alpha(\epsilon^{-1} y) dy + \int_{\Gamma_{k}} e^{i \lambda Q_m(y)} y^{l} \eta(y) \Omega_k(y) [1-\alpha(\epsilon^{-1} y)] dy.
            \end{split}    
        \end{equation*}
        For the first integral,
        \begin{equation*}
            \left|\int_{\Gamma_{k}} e^{i \lambda Q_m(y)} y^{l} \eta(y) \Omega_k(y) \alpha(\epsilon^{-1} y) dy  \right| \lesssim \norm{\eta}_{L^{\infty}} \epsilon^{|l|+1}.
        \end{equation*}
        Let $N \in \NN$. Using integration by parts $N$ times, the second integral can be written as
        \begin{equation}
        \label{eq35}
             \int_{\Gamma_k} e^{i \lambda Q_m(y)} (^tD_k)^N\{ y \eta(y)  \Omega_k(y)  [1-\alpha(\epsilon^{-1} y)] \}dy
        \end{equation}
        with $^tD_k g = s_k(2 i\lambda)^{-1} \frac{\partial}{\partial y_k}\left(\frac{g}{y_k}\right)$ for a differentiable function $g$, and $s_k=-1$ if $k \leq m$, $s_k=1$ if $k \geq m+1$. Note that
        \begin{equation}
        \label{eq36}
            (^tD_k)^N g = \lambda^{-N}\sum_{r=0}^{N}a^{(m, k)}_{N, r} y_k^{r-2N}\frac{\partial^r g}{\partial y_k^r}
        \end{equation} for $a^{(m, k)}_{N, r} \in \CC$. When we apply (\ref{eq36}) and the product rule of the derivative to expand (\ref{eq35}), we obtain a summation of terms where a term, ignoring the constant, is
        \begin{equation*}
                \lambda^{-N}\int_{\Gamma_k \cap B(0, \epsilon)^{c}} e^{i \lambda Q_m(y)} y_k^{r-2N} \left[\frac{\partial^{r_1}}{\partial y_k^{r_1}} y^{l} \right]  \left[\frac{\partial^{r_2}}{\partial y_k^{r_2}}\eta(y)\right]\left[\frac{\partial^{r_3}}{\partial y_k^{r_3}}\Omega_k(y)\right]\left[\frac{\partial^{r_4}}{\partial y_k^{r_4}} [1-\alpha(\epsilon^{-1} y)] \right]dy
        \end{equation*}
        for $r_1, r_2, r_3, r_4 \geq 0$, $r_1+r_2+r_3+r_4 = r \leq N$. We note that $\frac{\partial^{r_3} \Omega_k}{\partial y_k^{r_3}}$ is a homogeneous function of degree $-r_3$. When $|l|-N < -n$, the term above is bounded by a constant multiple of 
        \begin{equation*}
        \begin{split}
            & \lambda^{-N} \epsilon^{-r_4}\int_{\Gamma_k \cap B(0, \epsilon)^{c}} |y|^{r-2N+|l|-r_1-r_3} \norm{\frac{\partial^{r_2}\eta}{\partial y_k^{r_2}}}_{L^{\infty}} \norm{\frac{\partial^{r_4} \alpha}{\partial y_k^{r_4}}}_{L^{\infty}} dy \\
            \lesssim &   \lambda^{-N} \epsilon^{-r_4} \epsilon^{r-2N+|l|-r_1-r_3+n} \norm{\eta}_{C^{r_2}}\norm{\alpha}_{C^{r_4}}\\
            \lesssim & \lambda^{-N} \epsilon^{|l|-2N+r-r_1-r_3-r_4+n} \norm{\eta}_{C^{r_2}}\norm{\alpha}_{C^{r_4}}.
        \end{split}
        \end{equation*}
        Then we obtain (\ref{eq797}) by setting $\epsilon = \lambda^{-\frac{1}{2}}$. $A_k$ depends on $\norm{\eta}_{C^{N}}$.
    \item The proof for b) is similar to a). We use the same cone $\Gamma_k$ and functions $\Omega_k$, $\alpha$. It suffices to show
    \begin{equation}
    \label{eq896}
        \left|\int_{\Gamma_k} e^{i \lambda Q_m(y)} g(y) \Omega_k(y) dy  \right| \leq B_{N, k} \lambda^{-N}\
    \end{equation}
    for a $B_{N, k}>0$. If $2\epsilon < \delta$, 
     \begin{equation*}
             \int_{\Gamma_{k}} e^{i \lambda Q_m(y)} g(y) \Omega_k(y) dy 
            =  \int_{\Gamma_k \cap B(0, \epsilon)^{c}} e^{i \lambda Q_m(y)} g(y) \Omega_k(y) [1-\alpha(\epsilon^{-1} y)] dy.   
        \end{equation*}
    Then we apply integration by parts $N$ times to obtain
    \begin{equation}
    \label{eq905}
          \int_{\Gamma_k \cap B(0, \epsilon)^{c}} e^{i \lambda Q_m(y)} (^tD) \left\{g(y) \Omega_k(y) [1-\alpha(\epsilon^{-1} y)] \right\}dy.
    \end{equation}
    After we apply (\ref{eq36}) and the product rule of the derivative to expand (\ref{eq905}), we obtain a summation of terms where a term, ignoring the constant, is
        \begin{equation*}
                \lambda^{-N}\int_{\Gamma_k} e^{i \lambda Q_m(y)} y_k^{r-2N}   \left[\frac{\partial^{r_1}}{\partial y_k^{r_1}}g(y)\right]\left[\frac{\partial^{r_2}}{\partial y_k^{r_2}}\Omega_k(y)\right]\left[\frac{\partial^{r_3}}{\partial y_k^{r_3}} [1-\alpha(\epsilon^{-1} y)] \right]dy
        \end{equation*}
        for $r_1, r_2, r_3 \geq 0$, $r_1+r_2+r_3 = r \leq N$. We note that $\frac{\partial^{r_2} \Omega_k}{\partial y_k^{r_2}}$ is a homogeneous function of degree $-r_2$. When $-N < -n$, the term above is bounded by a constant multiple of 
        \begin{equation*}
        \begin{split}
            & \lambda^{-N} \epsilon^{-r_3}\int_{\Gamma_k \cap B(0, \epsilon)^{c}} |y|^{r-2N-r_2} \norm{\frac{\partial^{r_1}g}{\partial y_k^{r_1}}}_{L^{\infty}} \norm{\frac{\partial^{r_3} \alpha}{\partial y_k^{r_3}}}_{L^{\infty}} dy \\
            \lesssim &   \lambda^{-N} \epsilon^{-r_3} \epsilon^{r-2N-r_2+n} \norm{g}_{C^{r_1}}\norm{\alpha}_{C^{r_3}}\\
            \lesssim & \lambda^{-N} \epsilon^{r-2N-r_2-r_3+n} \norm{g}_{C^{r_1}}\norm{\alpha}_{C^{r_3}}.
        \end{split}
        \end{equation*}
        Then we obtain (\ref{eq896}) by setting $\epsilon = \frac{\delta}{3}$. 
    \end{enumerate}
\end{proof}

For $f \in C^{\infty}(\RR^{n})$, $U$ open in $\RR^{n}$, and $k\geq 0$, we denote $\norm{f}_{C^k(U)}$, or $\norm{f}_{C^k}$ if the implication of the open set $U$ is clear, as the quantity
$$\sum_{|\beta| \leq k} \norm{\frac{\partial^{|\beta|}}{\partial y^{\beta}} f}_{L^{\infty}(U)}.$$

\begin{theorem}\cite[Chapter 8, Proposition 6]{Stein1993}
\label{t61}
    \begin{enumerate}[a.]
        \item Let $Q_m(y)$ be defined as in (\ref{eq764}). Let
        $$I_m(\lambda; \psi) : = \int_{\RR^n} e^{i \lambda Q_m(y)} \psi(y) dy,$$ where $\psi$ is supported in a small neighborhood of $0$, and $\lambda_0 > 1$. For $\lambda \geq \lambda_0$,
        \begin{equation}
        \label{eq28}
            \left|I_m(\lambda; \psi) - (-1)^{\frac{n-m}{2}}(\pi i)^{\frac{n}{2}} \psi(0) \lambda^{-\frac{n}{2}}\right| \leq D\lambda^{-\frac{n+1}{2}},
        \end{equation}
        where $D$ depends on $\lambda_0$, the size of $\text{spt } \psi$, and $\norm{\psi}_{C^{n+3}}$.
        
        \item Let $\phi, \psi \in C^{\infty}(\RR^{n})$. Suppose $\phi$ only has one critical point $z_0$ in $\text{spt } \psi$, which is non-degenerate, and $\phi(z)=0$. Suppose $\text{spt } \psi \subset V$, where $V$ is a neighborhood of $z_0$ obtained from applying Morse's lemma (Lemma \ref{lem32}) to $\phi$. Let
        \begin{equation}
        \label{eq724}
            I(\lambda; \phi, \psi) : = \int_{\RR^n} e^{i \lambda \phi(z)} \psi(z) dz,
        \end{equation}
        $\lambda_0 > 1$, and $c=\left|\det {\bf H} \phi(z_0)\right|$, where the Hessian ${\bf H}$ is defined in (\ref{eq796}). Then for $\lambda \geq \lambda_0$, there exist $0\leq m \leq n$ and $D=D(\lambda_0, \phi, \psi)>0$, such that
        \begin{equation}
        \label{eq858}
            \left|I(\lambda; \phi, \psi) - (-1)^{\frac{n-m}{2}}(2\pi i)^{\frac{n}{2}} c^{-\frac{1}{2}} \psi(z_0) \lambda^{-\frac{n}{2}}\right| \leq D(\lambda_0, \phi, \psi)|\lambda|^{-\frac{n+1}{2}}.
        \end{equation}
        The error $D(\lambda_0, \phi, \psi)$ depends on $\lambda_0$, $c$, the size of $\text{spt } \psi$, and $\norm{\phi}_{C^{n+6}}$, $\norm{\psi}_{C^{n+3}}$.
    \end{enumerate}
\end{theorem}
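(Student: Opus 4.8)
The plan is to establish part (a) --- the model case of the quadratic phase $Q_m$ --- and then deduce part (b) from it by using Morse's lemma (Lemma \ref{lem32}) to straighten the phase.

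For part (a), choose a product cutoff $\beta(y)=\prod_{j=1}^n\beta_0(y_j)$ with $\beta_0\in C_0^\infty(\RR)$ and $\beta_0(0)=1$, and decompose $\psi=\psi(0)\beta+r$. Since $r(0)=0$, Hadamard's lemma gives $r(y)=\sum_{j=1}^n y_j h_j(y)$ with $h_j\in C_0^\infty(\RR^n)$ (after multiplying by a cutoff equal to $1$ on $\operatorname{spt}\psi$), so (\ref{eq32}) applied with $l=e_j$, hence $|l|=1$, yields $\bigl|\int e^{i\lambda Q_m}r\bigr|\le A\lambda^{-(n+1)/2}$ with $A$ controlled by $\norm{\psi}_{C^1}$ and the size of $\operatorname{spt}\psi$. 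For the leading term, Fubini factors $\int e^{i\lambda Q_m(y)}\beta(y)\,dy$ into a product of one-dimensional integrals $\int_\RR e^{\pm i\lambda t^2}\beta_0(t)\,dt$; each of these equals its Fresnel value $e^{\pm i\pi/4}\sqrt{\pi/\lambda}$ up to $O(\lambda^{-N})$ for every $N$, which one sees by writing $\beta_0=1-(1-\beta_0)$ and integrating the $(1-\beta_0)$-piece by parts repeatedly away from $t=0$, where $t\mapsto t^2$ has no critical point and all derivatives of $1-\beta_0$ vanish near the origin. Multiplying out the $n$ factors produces the main term of (\ref{eq28}) plus $O(\lambda^{-(n+1)/2})$; the error constant then depends only on $\lambda_0$, the size of $\operatorname{spt}\psi$ and $\norm{\psi}_{C^{n+3}}$, the order $n+3$ being what is needed to run (\ref{eq32}) to the gain $\lambda^{-(n+1)/2}$.

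For part (b), we may assume $\phi(z_0)=0$ and, after a translation, $z_0=0$: replacing $\phi$ by $\phi-\phi(z_0)$ only multiplies $I(\lambda;\phi,\psi)$ by the unimodular factor $e^{i\lambda\phi(z_0)}$. Since $0$ is the unique critical point of $\phi$ on $\operatorname{spt}\psi\subset V$ and it is non-degenerate, Lemma \ref{lem32} (with no parameter) yields a diffeomorphism $\tau$ of a neighborhood of $0$ onto its image with $\tau(0)=0$, $\det{\bf J}\tau\ne0$, and $\phi=Q_m\circ\tau$, where $m$ is the number of positive eigenvalues of ${\bf H}\phi(0)$ --- the same $m$ as in (\ref{eq858}). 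The change of variables $u=\tau(z)$ turns $I(\lambda;\phi,\psi)$ into $I_m(\lambda;\tilde\psi)$ with $\tilde\psi:=(\psi\circ\tau^{-1})\,|\det{\bf J}(\tau^{-1})|$, supported in the small set $\tau(\operatorname{spt}\psi)$, so part (a) applies. Now $\tilde\psi(0)=\psi(z_0)\,|\det{\bf J}\tau(0)|^{-1}$, and taking determinants in (\ref{eq828}) gives $(\det{\bf J}\tau(0))^2\det{\bf H}Q_m(0)=\det{\bf H}\phi(0)$; since $\det{\bf H}Q_m(0)=(-1)^{n-m}2^n$ and $\det{\bf H}\phi(0)=(-1)^{n-m}c$ (as ${\bf H}\phi(0)$ has $m$ positive and $n-m$ negative eigenvalues and $c=|\det{\bf H}\phi(0)|$), we obtain $|\det{\bf J}\tau(0)|=2^{-n/2}c^{1/2}$ and hence $\tilde\psi(0)=2^{n/2}c^{-1/2}\psi(z_0)$. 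Because $(\pi i)^{n/2}\cdot2^{n/2}=(2\pi i)^{n/2}$, the main term of part (a) applied to $\tilde\psi$ is exactly the one in (\ref{eq858}). The error $D(\lambda_0,\phi,\psi)$ is the part (a) error for $\tilde\psi$, so it is governed by $\lambda_0$, $c$, the size of $\operatorname{spt}\psi$ and $\norm{\tilde\psi}_{C^{n+3}}$, and this last quantity is bounded by $\norm{\psi}_{C^{n+3}}$ together with $C^{n+3}$-bounds on $\tau,\tau^{-1}$ and a lower bound on $|\det{\bf J}\tau|$, all of which the estimates recorded after the proof of Lemma \ref{lem32} control through $\norm{\phi}_{C^{n+6}}$.

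The routine ingredients --- Hadamard's lemma, the Fresnel evaluation, the change of variables --- are standard. I expect the main obstacle to be the quantitative bookkeeping in part (b): showing that $D(\lambda_0,\phi,\psi)$ depends on $\phi,\psi$ only through the stated finite orders of differentiability, which requires making Morse's lemma effective (uniform $C^{n+3}$-control of the straightening map $\tau$ and a uniform lower bound on $|\det{\bf J}\tau|$ over the relevant neighborhood), and, to a lesser degree, making rigorous in part (a) the passage from the conditionally convergent Fresnel integral to the asymptotic expansion via repeated integration by parts.
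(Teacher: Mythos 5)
Your argument is correct, and part (b) is essentially the paper's proof: straighten the phase with Morse's lemma, change variables to reduce to $I_m(\lambda;\tilde\psi)$ with $\tilde\psi=(\psi\circ\tau^{-1})\,|\det{\bf J}\tau^{-1}|$, recover the factor $c^{-1/2}$ from $|\det{\bf J}\tau(0)|^2=c/2^n$ via (\ref{eq828}), and trace the constant through the quantitative form of the Morse change of variables. Where you genuinely diverge is the model case (a). The paper never touches the conditionally convergent Fresnel integral: it inserts the damping factor $e^{-|y|^2}$, evaluates $\int e^{i\lambda Q_m(y)}e^{-|y|^2}dy=\pi^{n/2}(1-i\lambda)^{-m/2}(1+i\lambda)^{-(n-m)/2}$ exactly, extracts the main term by a Taylor-remainder bound in $\lambda^{-1}$, and then writes $e^{|y|^2}\psi(y)=\psi(0)+\sum_j y_jR_j(y)$, disposing of the remainder with (\ref{eq32}) ($|l|=1$) and of the $[1-\bar\psi]$ tail with the rapid-decay estimate (\ref{eq836}). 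Your route instead splits $\psi=\psi(0)\beta+r$ with a product cutoff, evaluates the $\beta$-term by Fresnel factorization plus repeated non-stationary integration by parts on the $1-\beta_0$ tail, and handles $r$ by Hadamard plus (\ref{eq32}); this is equally standard, and your tail-IBP plays exactly the role of (\ref{eq836}), but the Gaussian-damping device is precisely what removes the conditional-convergence issue you flag as your main loose end (one can also view your Fresnel values as the $\lambda^{-1}\to0$ limit of the paper's exact formula). Your phase factor $e^{i\pi(2m-n)/4}$ agrees with the paper's $a_0$, so the main terms coincide. One small internal correction: the bound on $\int e^{i\lambda Q_m}r$ cannot have a constant controlled by $\norm{\psi}_{C^1}$ alone, since the constants $A_l$ in (\ref{eq32}) require roughly $C^{n+2}$ control of the amplitudes $h_j$, hence $C^{n+3}$ of $\psi$ --- which is what you correctly assert at the end of that paragraph and what the statement of the theorem records.
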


\begin{proof}

    \begin{enumerate}[a.]
        \item Step 1: We have
        \begin{equation*}
            \int_{-\infty}^{\infty} e^{i \lambda x^2} e^{-x^2} dx = (1-i\lambda)^{-\frac{1}{2}} \int_{-\infty}^{\infty} e^{-x^2} dx,
        \end{equation*}
        and $\int_{-\infty}^{\infty} e^{-x^2} dx = \sqrt{\pi}$. We can fix the principal branch of $z^{-\frac{1}{2}}$ in the complex plane slit along the negative real-axis. Therefore,
        \begin{equation}
        \label{eq895}
        \begin{split}
            \int_{\RR^{n}} e^{i \lambda Q_m(y)} e^{-|y|^2} dy 
            = & \left(\prod_{j=1}^{m}\int_{\RR} e^{i \lambda y_j^2} e^{-y_j^2} dy_j\right)\left(\prod_{j=m+1}^{n}\int_{\RR} e^{-i \lambda y_j^2} e^{-y_j^2} dy_j\right) \\
            = & \pi^{\frac{n}{2}}(1-i\lambda)^{-\frac{m}{2}}(1+i\lambda)^{-\frac{n-m}{2}} \\
            = & \pi^{\frac{n}{2}} \lambda^{-\frac{n}{2}}(\lambda^{-1}-i)^{-\frac{m}{2}}(\lambda^{-1}+i)^{-\frac{n-m}{2}}.
        \end{split}
        \end{equation}
        
        We write the power series expansion of $f_m(w) = (w-i)^{-\frac{m}{2}}(w+i)^{-\frac{n-m}{2}}$ at $0$ as
        $\sum_{k=0}^{\infty} a_k w^k,$
        where $a_0 = (-1)^{\frac{n-m}{2}} i^{\frac{n}{2}}$. Let $\gamma$ be a line segment from $0$ to $\lambda^{-1}$, and $b_m =  \sup_{|w| = \lambda_0^{-1}} |f_m'(w)|$. Then for $\lambda \geq \lambda_0 > 1$, the error of approximation by the constant term is bounded by
        \begin{equation}
        \label{eq800}
        \begin{split}
            |f_m(\lambda^{-1}) -  a_0| \leq & \left|\int_{\gamma} f_m'(w) dw \right|\\
            \leq & |\gamma| \sup_{|w| = \lambda^{-1}} |f_m'(w)| \\
            \leq & b_m \lambda^{-1} 
        \end{split} 
        \end{equation}
        using the Integral form of the Taylor remainder and the maximum modulus principle.  Putting everything together, for $\lambda \geq \lambda_0$,
        \begin{equation}
        \label{eqn_32}
            \begin{aligned}
                 & \left|\int_{\RR^{n}} e^{i \lambda Q_m(y)} e^{-|y|^2} dy - a_0 \pi^{\frac{n}{2}} \lambda^{-\frac{n}{2}} \right| \\
             = & \left| \pi^{\frac{n}{2}} \lambda^{-\frac{n}{2}}(\lambda^{-1}-i)^{-\frac{m}{2}}(\lambda^{-1}+i)^{-\frac{n-m}{2}}  -  a_0 \pi^{\frac{n}{2}} \lambda^{-\frac{n}{2}}\right| & \text{ by (\ref{eq895})}\\
             = & \pi^{\frac{n}{2}} \lambda^{-\frac{n}{2}} \left|  (\lambda^{-1}-i)^{-\frac{m}{2}}(\lambda^{-1}+i)^{-\frac{n-m}{2}}  -  a_0 \right| \\
            \leq & b_m \pi^{\frac{n}{2}} \lambda^{-\frac{n+2}{2}}  & \text{ by (\ref{eq800})}.
            \end{aligned}
        \end{equation}

    Step 2: To obtain (\ref{eq28}), we write $e^{|y|^2}\psi(y) = \psi(0) + \sum_{y=1}^{n} y_j R_j(y)$, where $R_j \in C_0^{\infty}(\RR^n)$. Let $\bar{\psi} \in C^{\infty}_0(\RR^n)$, with $\bar{\psi}(y) = 1$ on the support of $\psi$. To apply results from the previous steps, we write
    \begin{equation*}
        \begin{split}
            & \int_{\RR^{n}} e^{i \lambda Q_m(y)} \psi(y) dy \\ = & \int e^{i \lambda Q_m(y)} e^{-|y|^2} [e^{|y|^2} \psi(y)] \bar{\psi}(y) dy \\
            = & \int_{\RR^{n}} e^{i \lambda Q_m(y)} e^{-|y|^2} \left[\psi(0)+\sum_{j=1}^{n} y_j R_j(y)\right] \bar{\psi}(y) dy \\
            = & \int e^{i \lambda Q_m(y)} e^{-|y|^2} \psi(0) \bar{\psi}(y) dy + \sum_{j=1}^{n}\int e^{i \lambda Q_m(y)} y_j e^{-|y|^2} R_j(y) \bar{\psi}(y) dy \\
            = & \psi(0) \int e^{i \lambda Q_m(y)} e^{-|y|^2}  dy+ \psi(0) \int e^{i \lambda Q_m(y)} e^{-|y|^2}  [1- \bar{\psi}(y) ]dy + \sum_{j=1}^{n} \int e^{i \lambda Q_m(y)} y_j e^{-|y|^2} R_j(y) \bar{\psi}(y) dy.
        \end{split}
    \end{equation*}
    
    Then we obtain (\ref{eq28}) by applying (\ref{eqn_32}) to the first integral, (\ref{eq836}) to the second integral, and (\ref{eq32}) to each term in the summation. We note that the error $D$ depends on $\lambda_0$, the size of the support of $\psi$, and $\norm{\psi}_{C^{n+3}}$.
    \item 
    By Morse's lemma (Lemma \ref{lem32}),
    there exists a diffeomorphism $\tau: V \to U$, where $V$ is a neighborhood of $z_0$ in the $z$-space, and $U$ is a neighborhood of $0$ in the $y$-space, such that $\tau(z_0)=0$ and 
    \begin{equation*}
        \phi(z) = Q_m(\tau(z)).
    \end{equation*} 
    By a change of variables $z=\tau^{-1}(y)$,
    $$ \int_{\RR^n} e^{i \lambda \phi(z)} \psi(z) dz =  \int e^{i \lambda Q_m(y)}\psi(\tau^{-1}(y)) |\det {\bf J  }\tau^{-1}(y)|dy=I_m(\lambda; (\psi \circ \tau^{-1})\cdot |\det {\bf J  }\tau^{-1}|),$$
    where ${\bf J}\tau$ is the Jacobian of $\tau$ (defined in (\ref{eq814})).
    We can apply part a) to the integral above. $D$ depends on $\lambda_0$, the size of support of $\psi \circ \tau^{-1}$, and $C^{n+3}$ norms of $(\psi \circ \tau^{-1})\cdot |\det {\bf J  }\tau^{-1}|$. We note that the $L^{\infty}$ norm of the $k^{\text{th}}$ partial derivative ($0 \leq k \leq n+3$) of $(\psi \circ \tau^{-1})\cdot |\det {\bf J  }\tau^{-1}|$ can be bounded by $C^{n+3}$ norms of $\psi$, $|\det {\bf J  }\tau|$, and $\inf\{|\det {\bf J  }\tau(z)| z \in \text{spt} \psi  \}$. We note that $|\det {\bf J  }\tau(0)|^2=\frac{c}{2^n}$ by (\ref{eq828}), and we can bound the error in terms of $\phi$ with the fact that $\norm{\det {\bf J  }\tau}_{C^{n+3}(V)}$  and $\inf\{|\det {\bf J  }\tau(z)| z \in \text{spt} \psi  \}$ depend on $\norm{\phi}_{C^{n+5}(V)}$, $c$, and the size of $\text{spt } \psi$ from the discussion of Morse's lemma in section \ref{sec62}.
    \end{enumerate}
\end{proof}

\bibliographystyle{plain} 
\bibliography{refs}

\Addresses

\end{document}